\newtheorem{theorem}{Theorem}[section]
\newtheorem{lemma}[theorem]{Lemma}
\newtheorem{remark}[theorem]{Remark}
\newtheorem{example}[theorem]{Example}
\numberwithin{theorem}{section}
\numberwithin{equation}{section}
\numberwithin{figure}{section}
\numberwithin{algorithm}{section}
\newcommand{\be}{\begin{equation}}
\newcommand{\ee}{\end{equation}}
\newcommand{\ba}{\begin{array}}
\newcommand{\ea}{\end{array}}
\newcommand{\ben}{\begin{eqnarray}}
\newcommand{\een}{\end{eqnarray}}
\newcommand{\bn}{\begin{eqnarray*}}
\newcommand{\en}{\end{eqnarray*}}
\newcommand{\p}{\partial}
\newcommand{\bv}{{\bf v}}
\newcommand{\bbn}{{\bf n}}
\newcommand{\cQ}{{\cal Q}}
\newcommand{\cE}{{\cal E}}
\newcommand{\cMh}{{\cal M}_h}
\newcommand{\cTh}{{\cal T}_h}
\newcommand{\cT}{\mathcal{T}}
\newcommand{\cTm}{{\cal T}_{h,i}}
\newcommand{\cMm}{{\cal M}_{h,i}}
\newcommand{\cM}{\mathcal{M}}
\newcommand{\al}{\alpha}
\newcommand{\de}{\delta}
\newcommand{\ga}{\gamma}
\newcommand{\Ga}{\Gamma}
\newcommand{\ka}{\kappa}
\newcommand{\Om}{\Omega}
\newcommand{\rd}{{\,\rm d}}
\newcommand{\norm}[1]{\left\Vert#1\right\Vert}
\newcommand{\norme}[1]{\left\Vert{\hskip -2.6pt}\left\vert #1 \right\vert{\hskip -2.6pt}\right\Vert_{h}}
\newcommand{\normee}[1]{\big\vert{\hskip -1.5pt}\big\vert{\hskip -1.5pt}\big\vert #1 \big\vert{\hskip -1.5pt}\big\vert{\hskip -1.5pt}\big\vert_{h}}
\newcommand{\abs}[1]{\left\vert#1\right\vert}
\newcommand{\set}[1]{\left\{#1\right\}}
\newcommand{\av}[1]{\left\{#1\right\}}
\newcommand{\jm}[1]{\left[#1\right]}
\newcommand{\eq}[1]{\begin{align}#1\end{align}}
\newcommand{\eqn}[1]{\begin{align*}#1\end{align*}}
\newcommand{\oo}{\Om_1\cup\Om_2}
\newcommand{\nn}{\nonumber}
\DeclareSymbolFont{ugmL}{OMX}{mdugm}{m}{n}
\DeclareMathAccent{\wideparen}{\mathord}{ugmL}{"F3}
\title{An Unfitted Interface Penalty DG--FE Method for Elliptic Interface Problems}
\author{
Juan Han\thanks{Department of Mathematics, Nanjing University, Nanjing 210093, China.
Email address: dg20210004@smail.nju.edu.cn}
\,\,\,
Haijun Wu\thanks{Department of Mathematics, Nanjing University, Nanjing 210093, China. 
Email address: hjw@nju.edu.cn}
\,\,\,\,\mbox{and}\,\,\,
Yuanming Xiao\thanks{Department of Mathematics, Nanjing University, Nanjing 210093, China.
Email address: xym@nju.edu.cn}
}
\date{}
\begin{document}
	\maketitle

	\begin{abstract}
		We propose an unfitted interface penalty Discontinuous Galerkin-Finite Element Method (UIPDG-FEM) for elliptic interface problems. This hybrid method combines the interior penalty discontinuous Galerkin (IPDG) terms near the interface-enforcing jump conditions via Nitsche method-with standard finite elements away from the interface. The UIPDG-FEM retains the flexibilities of IPDG, particularly simplifying mesh generation around complex interfaces, while avoiding its drawback of excessive number of global degrees of freedom. We derive optimal convergence rates independent of interface location and establish uniform flux error estimates robust to discontinuous coefficients. To deal with conditioning issues caused by small cut elements, we develop a robust two-dimensional merging algorithm that eliminates such elements entirely, ensuring the condition number of the discretized system remains independent of interface position. A key feature of the algorithm is a novel quantification criterion linking the threshold for small cuts to the product of the maximum interface curvature and the local mesh size.  Numerical experiments confirm the theoretical results and demonstrate the effectiveness of the proposed method.
	\end{abstract}
	
	{\bf Key words:}
	elliptic interface problem,unfitted meshes,interface penalty DG--FEM,harmonic weighting,flux estimates,merging algorithm.

	\section{Introduction}
	Interface problems have a wide range of applications in engineering and science, particularly in fluid dynamics and materials science.
	Examples include multi-phase flows, fluid-structure interaction and applications with varying domains, such as shape or topology optimisation.
    In this paper, we consider the following model problem: Let $\Omega=\Omega_1\cup\Gamma\cup\Omega_2$ be a bounded and convex polygonal/polyhedral domain in $\mathbb{R}^d~(d=2$ or $3)$, where $\Omega_1$ and $\Omega_2$ are two subdomains of $\Omega$ separated by a $C^2$-smooth interior interface $\Gamma$
\setlength\abovedisplayskip{3pt}
\setlength\belowdisplayskip{3pt}
\be\label{elliptic-interface}
	\begin{cases}
		\ba {ll}
		-\nabla \cdot (\al\nabla u) =  f \quad & {\rm in} \quad \Omega_1\cup\Omega_2, \\
		\jm{u}= g_D, \quad \big[\al\nabla u\cdot\bbn\big]= g_N \quad & {\rm on} \quad \Gamma, \\
		u=0 \quad & {\rm on} \quad \p\Omega.
		\ea
	\end{cases}
\ee
Here $\alpha=\alpha_i,i=1,2,$ is a piecewise constant function on the partition, and the jump across $\Gamma$ is $[v]=(v_1-v_2)|_\Gamma$, where $v_i=v|_{\Omega_i}, i=1,2$. We assume $\partial\Omega \cap \partial \Omega_1=\emptyset$, and $\bbn$ is unit outward normal to the boundary of $\Omega_1$. Due to the discontinuity of the coefficient $\alpha$ and the irregular geometry of the interface $\Gamma$, naive variational formulations may exhibit poor stability and loss in accuracy across $\Gamma$.

One way to remedy the accuracy of approximation is to use body-fitted grids. These methods align mesh vertices with the interface to resolve discontinuities directly, enabling application of classical finite element schemes. For linear elements, optimal convergence is achieved with interface-aligned meshes \cite{CZ98}, while higher-order accuracy is attained via isoparametric elements \cite{lmwz10}. Further, the strict shape-regularity requirement for body-fitted grids can be relaxed to maximal-angle-bounded meshes while preserving optimal approximation properties in linear element spaces \cite{chen2015adaptive,hu2021optimal}. 
	
To reduce the high cost of mesh generation, there has been a notable surge in interest towards {\it immersed} or {\it unfitted} methods. These approaches aim to simulate problems using a simple background mesh while allowing discontinuities to be distributed within elements. Within the finite element framework, interface conditions on interface elements are typically handled in one of two ways: (1) constructing special element bases that satisfy the conditions approximately or weakly (e.g., as in the immersed finite element method \cite{LLW03,LLZ15}), or (2) directly employing sliced polynomials on interface elements. This work focuses on the latter approach, where two independent sets of degrees of freedom are assigned to interface elements (or, ``doubling of unknowns'' in \cite{HH02}). Dirichlet jumps across interfaces are weakly enforced via penalty or Nitsche formulations \cite{Nitsche,Babuvska72}. Numerous numerical methods in this category have been developed and analyzed, including the multiscale unfitted finite element method \cite{cgh10}, the cut finite element method \cite{bhl17}, the $hp$-interface penalty finite element method \cite{WX10}, and unfitted discontinuous Galerkin methods \cite{m09, bastian2009unfitted}. For other {\it immersed} or {\it unfitted} techniques, we refer to the immersed boundary method \cite{Pe77}, the immersed interface method \cite{ll94}, the ghost fluid method \cite{FAMO99}, and the kernel-free boundary integral method \cite{zy2024}, among others cited therein.

Despite their utility, conditioning issues remain a primary obstacles to the successful implementation of unfitted finite element methods in realistic large-scale applications. This issue is associated with the so-called small cut cell problem: an element (say $K$) may have arbitrarily small intersection with one sub-domain, that is, $|K\cap\Omega_i|/|K|<\delta\ll1, i=1,2$. For brevity, we refer to them as small elements. Without specific stabilization, the appearance of such elements can result in significantly ill-conditioned discrete algebraic systems \cite{SFA18}. Two primary strategies exist to address the adverse effects of small elements on the conditioning of discrete systems. The first approach introduces additional penalties (termed ``ghost penalties'') on the jumps of derivatives across faces of interface elements \cite{Bu10}. The resulting formulation is weakly non-consistent but robust to cut location. The second strategy couples the degrees of freedom (DOFs) of small elements with those of adjacent larger elements to mitigate conditioning degradation. For instance, the patch reconstruction DG method \cite{LY20} integrates discontinuous Galerkin (DG) formulations with least-squares stabilization, while the aggregated unfitted finite element method \cite{SFA18} relies on a stable extrapolation extension operator from well-posed to ill-posed DOFs.
Meanwhile, the merging element methods \cite{JL13,hwx17,chen2022} combine the small elements with their neighboring large elements to form macro-elements, ensuring the intersection of each macro-element and $\Omega_i$ retains sufficient size. This idea extends to unfitted hybrid high-order (HHO) method with cell agglomeration \cite{BCDE21} which allows meshes including polygonal elements. Merging strategies have been adapted for non-conforming meshes \cite{chen2022} and unfitted HHO discretizations \cite{BCDE21}, demonstrating broad applicability across mesh types.

In this work, we present a novel algorithm (\cref{mergingAlgorithm}) for robustly merging small interface elements with adjacent larger elements to construct stable macro-elements. Under the interface resolution and merging criteria in Assumptions \eqref{resolvingInterface}-\eqref{merging}, we rigorously prove (1) the feasibility of the algorithm: ensuring every small element has a mergeable neighbor, and (2) the reliability of the algorithm: guaranteeing non-overlapping macro-elements with local mesh sizes equivalent to the original background mesh. The success of the algorithm relies principally on Assumption \eqref{merging} in Section \ref{mergingSection}: Letting $\ka_m=\max_{P\in\Ga} |\ka(P)|$ denote the maximum curvature of the interface $\Ga$, if the background mesh resolves the interface such that $t=\ka_m h\le1$ and
\[\frac{30t(t+2)}{100t+63}\le 1-2\de,\]
then every small element $K$ can be merged with at most three adjacent elements to form a rectangular macro-element $M_i(K)$ that retains sufficient size within $\Omega_i$ for $i=1,2$.
This criterion defines the relationship between the maximum curvature of the interface $\ka_m$, the mesh size $h$ and the small-element threshold $\de$.
For $\de=0.25$, the criterion simplifies to $\ka_m h\le 0.87$. When $\de\le 0.22$, it further reduces to $\ka_m h\le 1$, permitting the turning angle of
$\Ga$ within any single element to be as large as $\frac{\pi}{2}$.
The approach facilitates the merging process near geometrically complex interfaces and removes the restrictive assumption (common in prior work \cite{chen2022,BCDE21}) that an element may intersect the interface at most twice (see Assumptions \eqref{resolvingInterface}).

Based on the merged meshes, we propose an {\it unfitted interface penalty DG-FE method} (UIPDG-FEM) for the problem \cref{elliptic-interface}. This hybrid approach integrates two established numerical strategies:
The IPDG method is applied locally within macro-element regions, while the standard FEM is used elsewhere. Penalty terms are imposed along the interface $\Gamma$ to enforce the jump conditions and across DG-FEM coupling interfaces to ensure consistent integration of both discretization schemes.
The UIPDG-FEM inherits the flexibilities of IPDG method while avoiding its global DOF inflation as DG scheme is confined to a small portion of elements near the interface.
Furthermore, robustness with respect to coefficient contrasts is ensured by incorporating harmonic weighting into the Nitsche formulation, following the approach in \cite{Bu10,hwx17}. In particular, the UIPDG-FEM does not suffer the problem of instability since the extrapolation is not used here. The UIPDG-FEM exhibits desirable characteristics possessed by state-of-the-art unfitted methods \cite{hwx17,SFA18,BCDE21}, including arbitrary high-order accuracy with optimal convergence rates, uniform flux error bounds independent of discontinuous coefficients, and stiffness matrix condition numbers independent of the relative position of the interface to the meshes. We note that  the authors' earlier works \cite{WX10,hwx17}  focus solely on the unfitted IPFEM, unlike the present method which incorporates the DG method. Specifically, \cite{WX10} faces from unresolved numerical stability challenges: its stiffness matrix would become ill-conditioned in the presence of small cut elements, and the analysis requires flux jump penalization to retain optimal order $h$-convergence. Meanwhile, \cite{hwx17} adopts element merging to address the ill-conditioning issue but lacks a provably robust mesh-merging algorithm, leaving the practical implementation incomplete. 


The rest of our paper is organized as follows. In \cref{UIPDG-FEM}, we present the formulation of the UIPDG-FEM. Well-posedness and error estimates
for symmetric UIPDG-FEM are presented in \cref{SUIPDG-FEM}, including the error estimates of the solution in the energy and
$L^2$~norms and the error estimate of the flux in the $L^2$~norm. In \cref{condition-number}, we analyze the condition number of the discrete
system. In \cref{mergingSection}, we construct a  merging algorithm for the two dimensional case and prove its feasibility and reliability. In \cref{numerical-test}, we present some numerical experiments to confirm the theoretical findings.
		
Throughout this work, we let $C$ denote a generic positive constant that is independent of the mesh size $h$, the coefficient $\alpha$, the penalty parameters, and the position of the interface relative to the computational mesh. We employ the notation $A \lesssim B$ (or equivalently $B \gtrsim A$) to signify that $A \leq C B$, and $A \eqsim B$ to denote the two-sided inequality $A \lesssim B \lesssim A$.

\section{The unfitted interface penalty DG--FE Method}\label{UIPDG-FEM}
We now present a typical embedded interface configuration. To isolate the challenges posed by the interface, we assume that $\Omega$ can be conveniently discretized using Cartesian grids where the external boundary $\partial\Omega$ aligns with mesh element faces, while the interface $\Gamma$ remains immersed and may intersect elements arbitrarily. Let $\{\cTh\}$ be a family of conforming, quasi-uniform,  Cartesian partitions of $\Omega$. Let $h_K=\mathrm{diam}\, K$ for any $K\in\cTh$ and let $h=\max_{K\in\cTh}h_K$.  Define the interface element set ${\cal T}_\Gamma=\{K\in\cTh:\ K^\circ\cap\Gamma\neq\emptyset\}$, where $K^\circ$ denotes the element interior (all elements considered closed). Let $\cTm=\{K\in \cTh:\ K\cap \Omega_i\neq \emptyset\}$ for $i=1,2$, noting that $\cT_\Ga\subset\cTm$. Elements in $\cT_\Ga$ having insufficient overlap with $\Om_i$ may induce numerical instabilities in unfitted methods.

To address this, we merge ``small'' elements with adjacent neighbors to form geometrically regular macro-elements. To be precise, for a fixed threshold value $\de\in (0,1/2)$, we split $\cTm$ into two subsets:
\begin{align}\label{small}
  \cT_{h,i}^{small}&=\{K\in\cT_\Gamma:\;	
	  |K\cap\Om_i|<\de|K|\},\quad
	  \cT_{h,i}^{large}=\cTm \setminus\cT_{h,i}^{small},
\end{align}
for $i=1, 2$. Each small element $K\in\cT_{h,i}^{small}$ is merged with several neighboring elements  in $\cT_{h}$ including at least one from $\cT_{h,i}^{large}$ to construct a  macro-element $M_i(K)\supset K$. These macro-elements satisfy three critical conditions: the volume fraction $|M_i(K)\cap\Om_i|\gtrsim |M_i(K)|$, a size bound $\mathrm{diam}\, M_i(K)\lesssim h_K$, and pairwise disjointness such that for any distinct $K_1, K_2\in\cT_{h,i}^{small}$, either $M_i(K_1) = M_i(K_2)$ or $M_i(K_1)^{\circ}\cap M_i(K_2)^{\circ}=\emptyset$.
Under the interface-resolving assumptions (Assumptions \eqref{resolvingInterface}-\eqref{merging}), the merging algorithm in \cref{mergingAlgorithm} (for 2D cases) guarantees the success of the merging process. In particular, every small element $K\in\cTm^{small}$ is combined with at most three neighboring elements to form the merged macro-element $M_i(K)$ (illustrated in \cref{F1}). The merging algorithm for 3D cases will be developed in a future work.
For other elements in $\mathcal{T}_{h,i}^{large}$ that are not involved in the merging process, we define $M_i(K)$ as $K$ itself.  Then the induced mesh of $\cTm$ is defined as $\cMm~=\{M_i(K):K\in\cTm\},~i=1,2$, and $\cMm^{\Ga}$ is the set of all elements in $\cMm$ that intersect with $\Ga$.

		
	\begin{figure}[htbp]
			\centering
			\subfloat[$\cM_{h,1}$]{
				\includegraphics[scale=0.6]{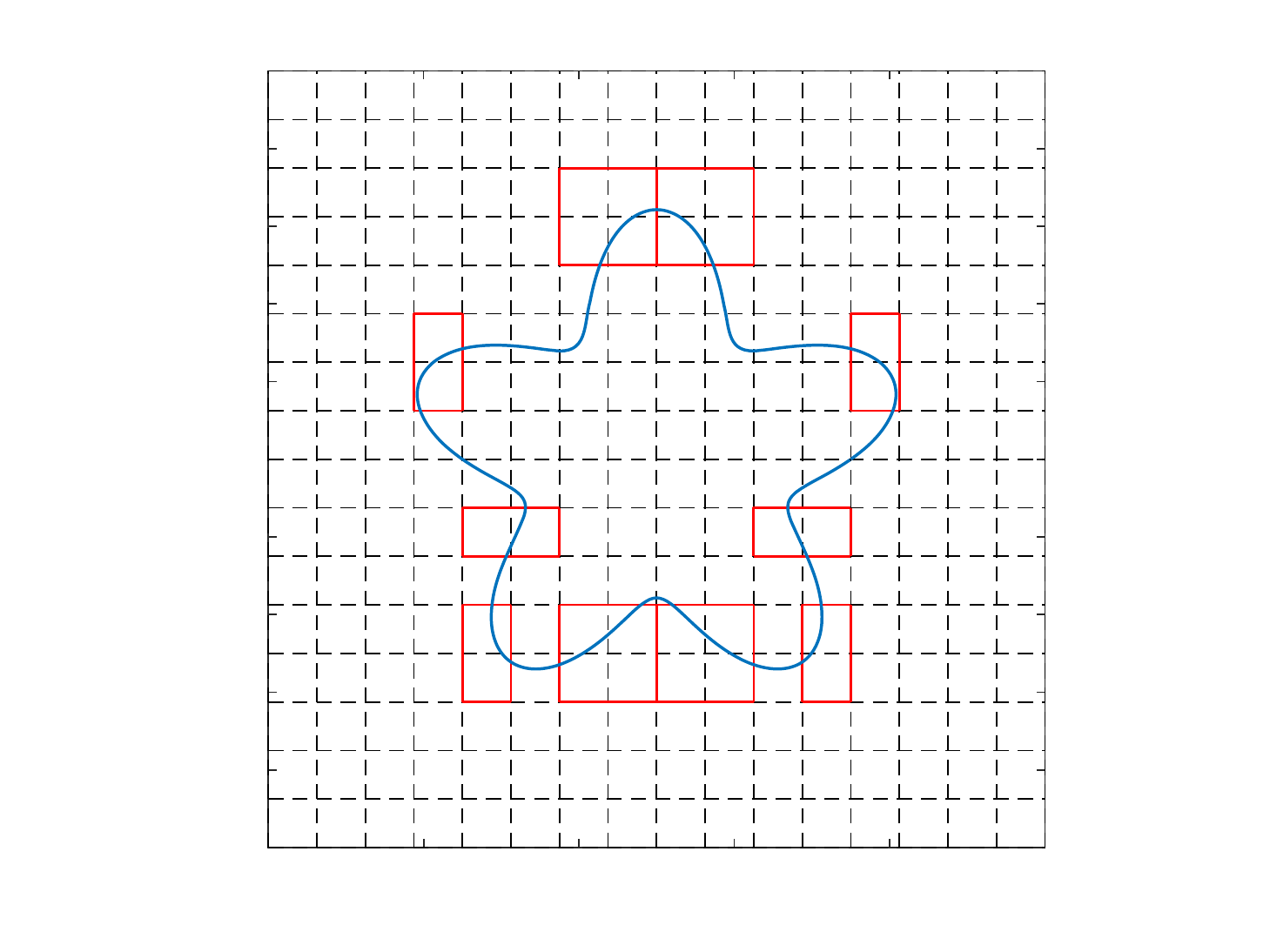}
			}
			\subfloat[$\cM_{h,2}$]{
				\includegraphics[scale=0.6]{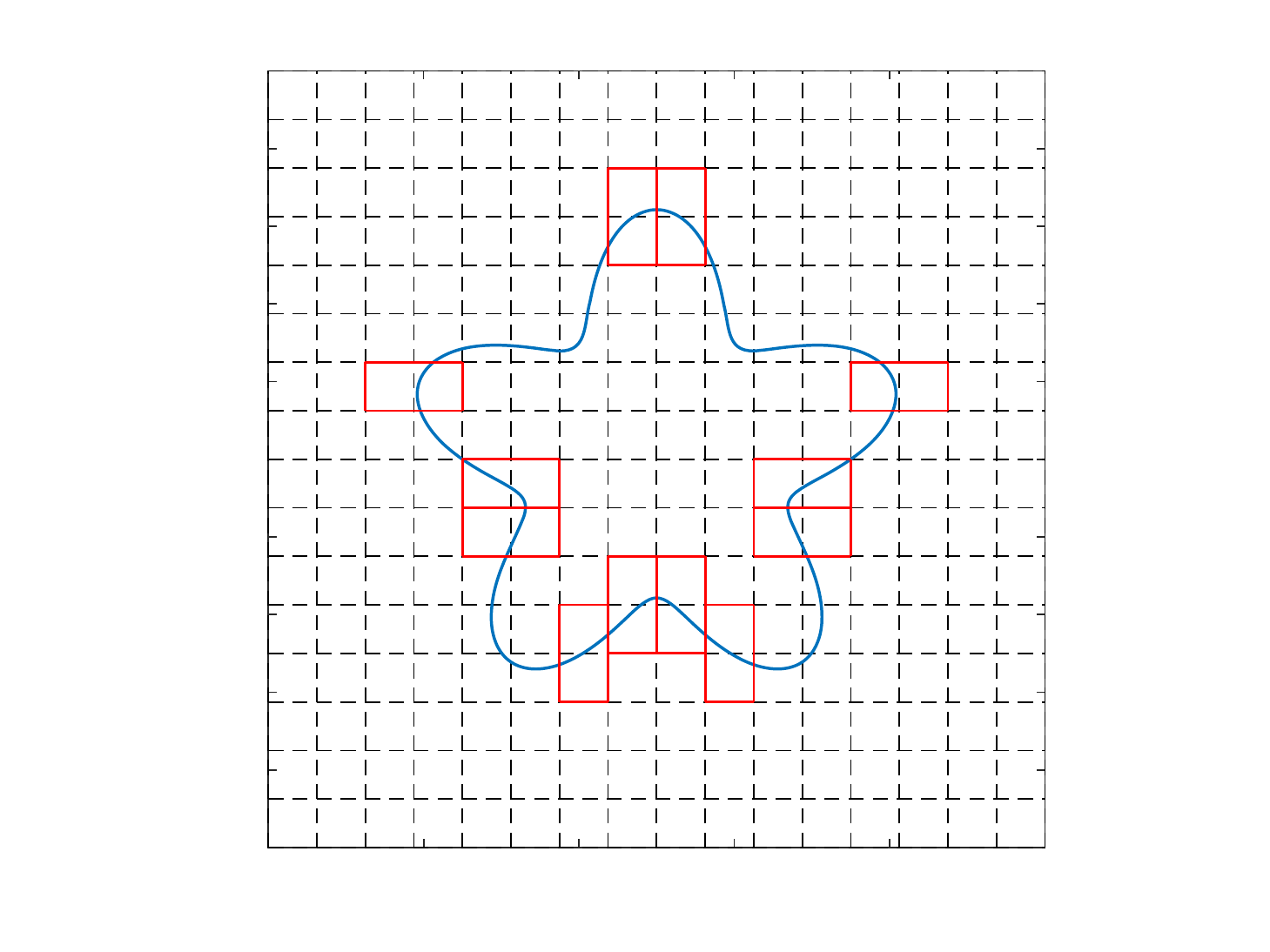}
			}
			\caption{Merged elements for $h=1/16$.}\label{F1}
		\end{figure}

 Let $\Om_{h,i}$ and $\widehat{\Om}_{h,i}$ be the union of elements in $\cMm$ and in $\cMm\cap\cTm$, respectively, we introduce the ``energy'' space,
		\begin{align*}
			V:= \{v:\; &v|_{K\cap\Om_i}\in H^2(K\cap\Om_i),\forall K\in\cMm,\; i=1,2;\;\\
			&v|_{\widehat{\Om}_{h,i}\cap\Om_i}\in H^1(\widehat{\Om}_{h,i}\cap\Om_i),\; i=1,2;\; v|_{\p\Om}=0 \}.
		\end{align*}

    For any $K\in\cTh$,
	let $\cQ_p(K)$ be the set of all polynomials whose degrees in each variable are less than or equal to $p$.
    The standard continuous finite element space on $\cTh$ is
	\[U_h:=\left\{v_h\in H_0^1(\Om):~v_h|_K\in\cQ_p(K),~\forall K\in\cTh\right\}.\]
	Let $U_{h,i}=U_h\cdot \chi_{\widehat{\Om}_{h,i}}$ be the finite element space that is continuous on non-merged elements for $i=1,2$.
    The space of discontinuous piecewise polynomials on merged elements is
		\[\setlength\abovedisplayskip{3pt}
		\setlength\belowdisplayskip{3pt}
		W_{h,i}:=\{v_h\in L^2\big(\cup_{K\in\cMm\setminus\cTm} K\big):~v_h|_{K}\in\cQ_p(K)\},\quad i=1,2.\]
		We define the composite spaces $V_{h,i}$ as the direct sum of $W_{h,i}$ and $U_{h,i}$ for $i=1,2$, and $V_h$ is
		\bn\label{Vh}
			V_h:=\left\{v_h=v_{h,1}\cdot\chi_{\Om_1}+v_{h,2}\cdot\chi_{\Om_2},v_{h,i}\in V_{h,i},~i=1,2\right\}.
		\en
		Clearly, $V_h\subset V$. Lastly, we introduce sets of faces
		\begin{align*}
        \mathcal{E}_h^i = \set{e:~e\subset\partial K, e\cap\Om_i\neq\emptyset , ~\forall ~K\in\cMm\setminus\cTm}, \;\mathring{\cE_h^i} = \set{e\cap\Om_i:~e\in\mathcal{E}_h^i},\; i=1,2.
		\end{align*}
        for boundaries of merged elements and their restrictions on sub-domains, respectively.

		In our numerical schemes, the quantities at the discontinuities are taken as the mean: Given any function $v \in V$, define
        \[\{ v\}_w:=w_1 v_1 + w_2 v_2,\quad\{ v\}^w:=w_2 v_1 + w_1 v_2,	\quad (v_i=v|_{\Om_i}, i=1, 2),\]
        where weights $w=(w_1,w_2)$ is a pair of nonnegative constants satisfying $w_1+w_2=1$.
		For internal discontinuities within subregions, we apply the arithmetic mean, i.e. $w_1=w_2=\frac12$. At the interface, to ensure robustness under coefficient contrast, we adopt the so-called ``harmonic weights'' from DG literature \cite{esz09,CYZ11}:
		\be\label{harmonic-weights}
		w_1=\frac{\al_2}{\al_1+\al_2},\; w_2=\frac{\al_1}{\al_1+\al_2}.
		\ee
		Clearly, $\{\al\}_w=\frac{2\al_1\al_2}{\al_1+\al_2}$ is the harmonic average of $\al_1$ and $\al_2$, and
		\begin{align}\label{emuw}
		\al_{min}\leq\{\al\}_w\leq\al_{max}, \; \{\al\}_w\leq2\al_{min},
		\end{align}
		where $\al_{min}=\min\{\al_1,\al_2\}, \al_{max}=\max\{\al_1,\al_2\}$.

		Testing the equation $\cref{elliptic-interface}_a$ by any $ v\in V$ and applying integration by parts with the identity $[ab]=\{a\}_w[b]+[a]\{b\}^w$, we derive
		\bn
			\int_{\Omega_1\cup\Omega_2}\al\nabla u\cdot\nabla v -\int_{\Gamma\cup\mathring{\mathcal{E}_h^1}\cup\mathring{\mathcal{E}_h^2}} \{\al\nabla u\cdot\bbn\}_w[ v]=\int_{\Omega_1\cup\Omega_2} f v +\int_\Gamma  g_N\{ v\}^w.
		\en
		Define the bilinear and linear forms:
		\begin{align}\label{ah}
			a_h(u, v)=& \int_{\oo}\al\nabla u\cdot\nabla v+\int_{\Ga\cup\mathring{\mathcal{E}_h^1}\cup\mathring{\mathcal{E}_h^2}} \frac{\ga\{\al\}_w}{h}[u][ v]\\
			&-\int_{\Ga\cup\mathring{\mathcal{E}_h^1}\cup\mathring{\mathcal{E}_h^2}} \big( \{\al\nabla u\cdot\bbn\}_w[ v] +\beta[u]\{\al\nabla v\cdot\bbn\}_w \big),\nn
		\end{align}
		\begin{align}\label{fh}
			f_h( v)=&\int_{\oo} f v +\int_\Ga  g_N\{ v\}^w -\beta\int_\Ga g_D\{\al\nabla v\cdot\bbn\}_w +\int_\Ga\frac{\ga\av{\al}_w}{h}g_D[ v],
		\end{align}
		where $\beta$ is a real number and $\gamma$ is a stabilization parameter to be specified later.
		
		The solution $u$ to the problem \cref{elliptic-interface} satisfies the following equation:
		\be\label{Variational-Pro}
			a_h( u, v)=f_h( v),\qquad  \forall v\in  V.
		\ee
		Our {\it unfitted interface penalty DG-FE Method} seeks $u_h\in  V_h$ such that
		\be\label{Discrete-Pro}
		\setlength\abovedisplayskip{2pt}
		\setlength\belowdisplayskip{2pt}
			a_h( u_h, v_h)=f_h( v_h),\qquad  \forall v_h\in  V_h.
		\ee
		From the equations \cref{Variational-Pro} and \cref{Discrete-Pro}, we have the Galerkin orthogonality:
		\be\label{Lem-Weak-Galerkin-orthogonality-0}
			a_h( u- u_h,v_h)=0,\qquad\forall v_h\in V_h.
		\ee

		\begin{remark} {\rm (i)}
			The discontinuity treatment follows established interior penalty methods from discontinuous/continuous Galerkin frameworks (see, e.g., \cite{BE07, FW09}).
The symmetric case $\beta = 1$ corresponds to a Symmetric unfitted interface penalty DG-FEM (SUIPDG-FEM), while $\beta \neq 1$ yields a nonsymmetric formulation.
We focus on $\beta = 1$ for simplicity of presentation.
For other values of $\beta$ in the bilinear form $a_h(\cdot,\cdot)$, the analysis differs only in the choice of the stability parameter $\gamma$; we omit the details for brevity.
		
			{\rm (ii)} In \cref{ah}, we specifically applied DG to the merged interface elements. However, our analysis remains valid if DG is utilized across all interface elements.
		\end{remark}

		\section{Error estimates of SUIPDG-FEM}\label{SUIPDG-FEM}

		To consider the boundedness and stability of the primal form $a_h(\cdot,\cdot)$, we define the broken $H^1$-norm on the space $V$:
		\setlength\abovedisplayskip{3pt}
		\setlength\belowdisplayskip{3pt}
		\begin{align}\label{norm}
			\norme{v}^2=\big\|\al^{\frac{1}{2}}\nabla v\big\|^2_{0,\oo}+\frac{\ga\{\al\}_w}{h}\norm{[v]}_{0,\Ga\cup\mathring{\mathcal{E}_h^1}\cup\mathring{\mathcal{E}_h^2}}^2
			+\frac{h}{\gamma \{\al\}_w }\norm{\{\al\nabla v\cdot\bbn\}_w}_{0,\Ga\cup\mathring{\mathcal{E}_h^1}\cup\mathring{\mathcal{E}_h^2}}^2.
		\end{align}
Throughout this paper, we use standard notation for Sobolev spaces. For instance, the norm on $H^s(\omega)$ is denoted by $\|\cdot\|_{s,\omega}$, and the corresponding seminorm by $|\cdot|_{s,\omega}$. The broken Sobolev space of the form $H^s(\Omega_1)\times H^s(\Omega_2)$ is denoted by $H^s(\oo)$ and is endowed with the norm $\norm{\cdot}_{s,\oo}=\big(\norm{\cdot}_{s,\Om_1}^2+\norm{\cdot}_{s,\Om_2}^2\big)^{\frac12}$, analogously for seminorms.

		Next, we recall key results and prove the approximation properties of the discrete spaces. 		
		The following trace and inverse trace inequalities on interface-cut elements play an important role in the analysis
		of our method; see \cite[Lemma 3.1]{WX10} for details of the proofs.
		\begin{lemma}\label{Lem-WX10}
			There exists a constant $h_0>0$, depending only on the interface $\Gamma$ and the shape
			regularity of the meshes, such that for all $h\in(0,h_0)$, and every interface-cut element $K\in\cMm$, $i=1, 2$, the following estimates hold:
			\eq{ \|v\|_{0,\p(K\cap\Om_i)}\lesssim h_K^{-\frac{1}{2}}\|v\|_{0,K\cap\Om_i} +\|v\|_{0,K\cap\Om_i}^{\frac12}\|\nabla v\|_{0,K\cap\Om_i}^{\frac12}, \quad\forall v\in H^1(K),\label{Lem-WX10-0}}
			\eq{ \|v_h\|_{0,\p(K\cap\Om_i)}\leq C_{\mathrm{tr}} h_K^{-\frac{1}{2}}\|v_h\|_{0,K\cap\Om_i}, \quad\forall v_h\in\cQ_p(K).\label{Lem-WX10-1}}
		\end{lemma}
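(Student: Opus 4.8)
The plan is to establish the multiplicative trace inequality \eqref{Lem-WX10-0} first and then to deduce the inverse trace inequality \eqref{Lem-WX10-1} from it. Throughout, the decisive structural fact is that every interface-cut macro-element $K\in\cMm$ is \emph{non-degenerate} with respect to $\Om_i$: by the merging construction one has $|K\cap\Om_i|\gtrsim|K|\eqsim h_K^d$, so $\diam(K\cap\Om_i)\eqsim h_K$ and $K\cap\Om_i$ is a rectangle truncated by a single mildly curved arc of $\Ga$. This is exactly what lets us replace the intrinsic length scale of $K\cap\Om_i$ by the element size $h_K$ in both bounds.

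For \eqref{Lem-WX10-0} I would use the divergence-theorem (vector-field) argument. Writing $D=K\cap\Om_i$ and choosing a field $\boldsymbol\Phi$ with $\|\boldsymbol\Phi\|_{L^\infty(D)}\ls1$, $\|\na\cdot\boldsymbol\Phi\|_{L^\infty(D)}\ls h_K^{-1}$ and $\boldsymbol\Phi\cdot\bbn\ge c_0>0$ on $\p D$, the identity $\int_{\p D}v^2\,\boldsymbol\Phi\cdot\bbn=\int_D\big(v^2\,\na\cdot\boldsymbol\Phi+2v\,\boldsymbol\Phi\cdot\na v\big)$ together with Cauchy--Schwarz yields $c_0\|v\|_{0,\p D}^2\ls h_K^{-1}\|v\|_{0,D}^2+\|v\|_{0,D}\|\na v\|_{0,D}$, and \eqref{Lem-WX10-0} follows after taking square roots. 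On the axis-aligned faces of $K$ the choice $\boldsymbol\Phi(x)=(x-x_c)/h_K$, with $x_c$ the centre of $K$, already gives $\boldsymbol\Phi\cdot\bbn=\tfrac12$; securing $\boldsymbol\Phi\cdot\bbn\ge c_0$ on the curved piece $\Ga\cap K$ is the real point, and I would handle it by straightening $\Ga$ locally.

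Concretely, I would rescale $K$ to a unit reference box and invoke the $C^2$-regularity of $\Ga$: since $\ka_m h\le1$ by Assumption \eqref{merging}, on the reference box the rescaled interface is a $C^2$ graph whose unit normal stays inside a fixed cone, so a single smooth $\boldsymbol\Phi$ (for instance a partition-of-unity blend of $(x-x_c)/h_K$ with a smooth extension of the outward interface normal) satisfies $\boldsymbol\Phi\cdot\bbn\ge c_0$ on all of $\p D$ with $c_0$ depending only on $\Ga$. Equivalently, one may argue by compactness: the family of unit boxes cut by a $C^2$ arc of curvature $\le1$ with overlap fraction $\ge\de$ is uniformly Lipschitz, so the trace constant is uniform, and rescaling reinstates the powers of $h_K$.

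The inverse estimate \eqref{Lem-WX10-1} is then obtained by specialising \eqref{Lem-WX10-0} to $v_h\in\cQ_p(K)$ and absorbing the gradient term through the cut inverse inequality $\|\na v_h\|_{0,K\cap\Om_i}\ls h_K^{-1}\|v_h\|_{0,K\cap\Om_i}$; substituting this into the second term of \eqref{Lem-WX10-0} collapses it to $h_K^{-1/2}\|v_h\|_{0,K\cap\Om_i}$ and gives \eqref{Lem-WX10-1}. Both the cut inverse inequality and \eqref{Lem-WX10-1} are, for the fixed finite-dimensional space $\cQ_p(K)$, equivalences of norms that I would prove by rescaling to the reference box and a compactness argument over the same uniformly non-degenerate family of cut configurations. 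I expect the main obstacle to be precisely this uniformity of the constants with respect to the position and shape of the cut: the bound $\ka_m h\le1$ keeps the interface arc uniformly flat and the corners of $\p D$ uniformly non-degenerate, while the merging-guaranteed lower bound $|K\cap\Om_i|\gtrsim|K|$ is indispensable — for a vanishing cut both estimates fail, since the element-size factor $h_K^{-1/2}$ cannot be replaced by the far larger intrinsic trace constant of a sliver. This is the structural reason the lemma is posed on the merged mesh $\cMm$ rather than on the raw cut elements.
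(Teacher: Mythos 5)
The paper itself does not prove this lemma: it is quoted from \cite[Lemma 3.1]{WX10}, where such bounds are obtained constructively, by representing the $C^2$ interface locally as a graph and integrating along directions transversal to $\Ga$. Your divergence-field/compactness route is therefore a genuinely different argument, and its skeleton is sensible: the identity for $\na\cdot(v^2\boldsymbol\Phi)$ does reduce \eqref{Lem-WX10-0} to producing a field with $|\boldsymbol\Phi|\ls 1$, $|\na\cdot\boldsymbol\Phi|\ls h_K^{-1}$, $\boldsymbol\Phi\cdot\bbn\ge c_0$ on $\p(K\cap\Om_i)$, and your closing observation --- that the merged-mesh bound $|K\cap\Om_i|\gtrsim|K|$ is what makes cut norms on the right-hand side possible at all --- is correct and important. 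The genuine gap sits exactly at the point you flag as the crux, and neither of your two justifications closes it. The curvature bound $\ka_m h\le 1$ of Assumption \eqref{merging} controls the turning of $\Ga$ inside an element, but it says nothing about the \emph{angle at which $\Ga$ meets $\p K$}; hence the corners of $K\cap\Om_i$ are not uniformly non-degenerate and the family of admissible cuts is not uniformly Lipschitz. Concretely, let $\Ga$ be tangent to the bottom edge of $K=[0,h]^2$ at a corner, $\Ga\cap K=\{y=x^2/(2h)\}$ (so $\ka_m h=1$, allowed by \eqref{Tt} whenever $\de\le 0.22$), and let $\Om_i$ be the thin side, $D=K\cap\Om_i=\{0<y<x^2/(2h)\}$. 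Then $|D|=|K|/6$, so for $\de<1/6$ this element is classified as large, is never merged, lies in $\cMm$, and Assumptions \eqref{resolvingInterface}--\eqref{merging} all hold; yet for $v_\ep=\max(0,1-x/\ep)\in H^1(K)$ one computes $\|v_\ep\|^2_{0,\p D}\ge \ep/3$ (bottom edge alone), while $h^{-1}\|v_\ep\|^2_{0,D}+\|v_\ep\|_{0,D}\|\na v_\ep\|_{0,D}\eqsim \ep^2/h$, so \eqref{Lem-WX10-0} itself fails on this cusp as $\ep/h\to 0$. In particular no field with the three stated properties can exist there, and the compactness argument starts from a false premise. (Your blend also has a concrete sign defect: where $\Ga$ meets a face at a small angle $\theta$, the outward interface normal $\nu$ satisfies $\nu\cdot\bbn_{\mathrm{face}}=-\cos\theta<0$, so $\chi\nu+(1-\chi)(x-x_c)/h_K$ points the wrong way on that face where $\chi\approx 1$.) The conclusion is that your proof --- like the lemma as literally stated --- tacitly needs a transversality hypothesis on the intersections of $\Ga$ with element boundaries, which is implicit in \cite{WX10}; it must be made explicit rather than absorbed into an unproved ``uniformly Lipschitz'' claim.

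What can be salvaged is, fortunately, the estimate the paper actually needs: the polynomial bound \eqref{Lem-WX10-1}, whose constant $C_{\mathrm{tr}}$ enters the coercivity proof, should not be routed through the fragile $H^1$ bound. Instead, split $\p(K\cap\Om_i)$ into $\p K\cap\overline{\Om}_i$ and $\Ga\cap K$; bound the straight part by the standard inverse trace inequality on the full element $K$, and the curved part by the full-element-norm interface trace inequality (valid for arbitrary cut position precisely because its right-hand side sees all of $K$) combined with $\|\na v_h\|_{0,K}\ls h_K^{-1}\|v_h\|_{0,K}$; finally convert $\|v_h\|_{0,K}$ into $\|v_h\|_{0,K\cap\Om_i}$ via the finite-dimensional norm equivalence $\|v_h\|_{0,K}\le C(p,\de)\|v_h\|_{0,K\cap\Om_i}$, which requires only $|K\cap\Om_i|\ge\de|K|$ --- no boundary regularity of the cut at all, since a nonzero polynomial cannot vanish on a set of positive measure; your weak-$*$ compactness sketch proves exactly this statement. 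This makes \eqref{Lem-WX10-1} uniform even in the tangential configurations above. Note also that where \eqref{Lem-WX10-0} is invoked in the paper (in the proof of \cref{bestapproximation}, applied to $\eta_i=\tilde v_i-I_h\tilde v_i$, a function defined on all of $K$), the right-hand side is immediately enlarged to norms over full elements, so the full-element-norm version of the trace inequality --- the one \cite{WX10} actually proves, free of any transversality caveat --- suffices there.
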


We want to show that the DG-FE space has optimal approximation quality for piecewise smooth functions $v\in H^p(\Omega_1\cup \Omega_2)$.
For this purpose, we construct an interpolant of $v$ by the nodal interpolants of $H^s$-extensions of $v_1$ and $v_2$ as follows. Let $s\ge 2$ and choose extension operators $E_1: H^s(\Omega_1)\mapsto H^s(\Omega)\cap H_0^1(\Omega)$ and $E_2:
			H^s(\Omega_2)\cap H_0^1(\Omega)\mapsto H^s(\Omega)\cap H_0^1(\Omega)$ such that
			\eq{(E_iv)|_{\Omega_i}=v\quad {\rm and} \quad \|E_iv\|_{s,\Omega}\lesssim \|v\|_{s,\Omega_i},\quad  i=1, 2.\label{extension}}
where $v\in H^s(\Omega_1)$ for $i=1$ and $v\in H^s(\Omega_2),\, v|_{\partial\Omega}=0$ for $i=2$.

Let $I_h w$ be the standard nodal interpolation of $w\in H^2(\Omega)$. For any piecewise $H^2$ function $v\in H^2(\oo)$, denote $E_i v$ by $\tilde{v}_i$ and define interpolation of $v\in V$ onto $V_h$ by
		\begin{align}\label{interpolant}
        \tilde I_h v = \chi_{\Om_1} I_h \tilde{v}_1 + \chi_{\Om_2} I_h \tilde{v}_2.
		\end{align}
		We present an approximation error bound for the space $V_h$.
		\begin{lemma}\label{bestapproximation}
			Suppose $0<h\leq h_0$. Then for all $v\in H^{p+1}(\oo)$ with $v|_{\partial\Omega} = 0$, we have
			\begin{align*}
				\inf_{v_h\in V_h}\norme{v-v_h}\lesssim(\ga+\ga^{-1})^\frac12 h^p\big|\al^{\frac12}v\big|_{p+1,\oo}.				
			\end{align*}
		\end{lemma}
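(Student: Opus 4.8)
The plan is to bound the infimum from above by taking the explicit interpolant $\tilde I_h v$ of \eqref{interpolant} as the competitor, so that $\inf_{v_h\in V_h}\norme{v-v_h}\le\norme{v-\tilde I_h v}$. First I would check admissibility, i.e.\ that $\tilde I_h v\in V_h$: each $I_h\tilde v_i$ is the nodal interpolant of the global $H^{p+1}$-extension $\tilde v_i=E_iv$ from \eqref{extension}, hence continuous on $\Om$ and piecewise $\cQ_p$, so its restriction to $\Om_i$ lies in $V_{h,i}=W_{h,i}\oplus U_{h,i}$. Writing $\eta:=v-\tilde I_h v$, with $\eta|_{\Om_i}=\tilde v_i-I_h\tilde v_i$ (since $\tilde v_i|_{\Om_i}=v_i$), the task reduces to estimating the three contributions in the energy norm \eqref{norm}: the bulk gradient term, the jump penalty on $\Ga\cup\mathring{\mathcal{E}_h^1}\cup\mathring{\mathcal{E}_h^2}$, and the weighted-flux term on the same faces.

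The engine is the combination of two ingredients: the standard nodal interpolation estimates for the \emph{extended} functions on whole background cells, $\|D^m(\tilde v_i-I_h\tilde v_i)\|_{0,K}\ls h_K^{p+1-m}|\tilde v_i|_{p+1,K}$ for $m=0,1,2$, together with the cut-element trace inequality \eqref{Lem-WX10-0}. For the bulk term I would restrict the $m=1$ estimate to $K\cap\Om_i\subset K$, sum over $\cMm$, and use \eqref{extension} with quasi-uniformity to get $\|\al^{\frac12}\nabla\eta\|_{0,\oo}^2\ls h^{2p}|\al^{\frac12}v|_{p+1,\oo}^2$. For the boundary quantities I would feed $\eta_i$ (for jumps) and $\partial_j\eta_i$ (for fluxes) into \eqref{Lem-WX10-0}; balancing its two right-hand terms yields $\|\eta_i\|_{0,\Ga\cap K}\ls h_K^{p+\frac12}|\tilde v_i|_{p+1,K}$ and $\|\nabla\eta_i\|_{0,\Ga\cap K}\ls h_K^{p-\frac12}|\tilde v_i|_{p+1,K}$, and analogously on the interior faces in $\mathring{\mathcal{E}_h^i}$.

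For the jump term I would use that the chosen interpolant is globally continuous, so $[\eta]=0$ on every interior face in $\mathring{\mathcal{E}_h^i}$ and only $\Ga$ contributes; summing the trace bound over interface cells and using $\{\al\}_w\le 2\al_{min}\le 2\al_i$ from \eqref{emuw} gives $\tfrac{\ga\{\al\}_w}{h}\|[\eta]\|_{0,\Ga}^2\ls\ga\,h^{2p}|\al^{\frac12}v|_{p+1,\oo}^2$. For the flux term the coefficient bookkeeping is the crucial point: on $\Ga$ the harmonic weights \eqref{harmonic-weights} give the exact identity $w_i\al_i=\tfrac12\{\al\}_w$, so the weighted normal flux is controlled by $\{\al\}_w$ times the two one-sided gradient traces, whence the prefactor $\tfrac{h}{\ga\{\al\}_w}$ collapses one power of $\{\al\}_w$ and leaves $\tfrac{\{\al\}_w}{\ga}h^{2p}|v|_{p+1,\Om_i}^2\le\tfrac{2}{\ga}h^{2p}\al_i|v|_{p+1,\Om_i}^2$; on the interior faces $\mathring{\mathcal{E}_h^i}$ both sides carry the single coefficient $\al_i$, so $\{\al\}_w=\al_i$ and identical power counting applies. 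Collecting the three estimates and using $1\ls\ga+\ga^{-1}$ yields $\norme{\eta}^2\ls(\ga+\ga^{-1})h^{2p}|\al^{\frac12}v|_{p+1,\oo}^2$, and taking square roots proves the claim.

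I expect the main obstacle to be the coefficient-robust flux estimate rather than the interpolation bookkeeping: the weights must be arranged so that no negative power of $\al_{min}$ or ratio $\al_{max}/\al_{min}$ survives, and the harmonic-weight identity $w_i\al_i=\tfrac12\{\al\}_w$ is exactly what removes this dependence. A related delicate point is verifying that the single prefactor $\{\al\}_w$ in \eqref{norm} is consistent with the two distinct weightings (harmonic on $\Ga$, arithmetic within each $\Om_i$). A secondary care point is that all interpolation estimates must be taken on the full cells $K$ for the extended $\tilde v_i$, since no direct polynomial approximation is available on the arbitrarily cut pieces $K\cap\Om_i$; the cut geometry enters only through the trace inequality \eqref{Lem-WX10-0}.
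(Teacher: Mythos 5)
Your overall strategy coincides with the paper's: take $\tilde I_h v$ as the competitor, estimate the three contributions to $\norme{\cdot}$ via nodal interpolation of the extensions $\tilde v_i=E_i v$ on full background cells together with the cut-element trace inequality, and use the harmonic-weight identity $w_i\al_i=\frac12\{\al\}_w$ (with $\{\al\}_w\le 2\al_i$) to keep the flux term coefficient-robust. Those parts are sound; indeed your observation that $[\eta]$ vanishes on $\mathring{\mathcal{E}_h^1}\cup\mathring{\mathcal{E}_h^2}$ because $I_h\tilde v_i$ is globally continuous is a correct simplification of the paper's argument, and your appeal to \eqref{Lem-WX10-0} rather than the polynomial trace inequality is the right choice for the non-polynomial error $\eta_i$.

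However, there is a genuine gap at the final step: your chain of estimates cannot produce the seminorm $\big|\al^{\frac12}v\big|_{p+1,\oo}$ claimed in the lemma; it only produces the full norm $\big\|\al^{\frac12}v\big\|_{p+1,\oo}$. The element-wise interpolation bounds are in terms of $|\tilde v_i|_{p+1,K}$, i.e.\ seminorms of the \emph{extension}, and after summation you must pass from $|\tilde v_i|_{p+1,\Om_{h,i}}$ to a quantity depending on $v$ alone. The only available tool is \eqref{extension}, which reads $\|E_iv\|_{s,\Om}\lesssim\|v\|_{s,\Om_i}$ --- a full-norm bound; extension operators do not preserve seminorms. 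Concretely, if $v$ restricted to $\Om_i$ is a $\mathcal{Q}_p$ polynomial, then $|v|_{p+1,\Om_i}=0$, yet $E_iv$ is generally not a polynomial, so the interpolation error of $\tilde v_i$ on cut cells near $\Ga$ does not vanish, contradicting the bound you assert. This is exactly why the paper adds a closing step that you omit: for an arbitrary pair of $\mathcal{Q}_p$ polynomials $q=(q_1,q_2)$ it notes that $\tilde I_h(v+q)-q\in V_h$, applies the full-norm estimate to $v+q$, and then invokes the Deny--Lions (Bramble--Hilbert) lemma, $\inf_q\|v+q\|_{p+1,\Om_i}\lesssim|v|_{p+1,\Om_i}$, to recover the seminorm. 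Without this polynomial-correction argument (or an extension operator equipped with a seminorm bound, which \eqref{extension} does not provide), your proof establishes the lemma only with $\big\|\al^{\frac12}v\big\|_{p+1,\oo}$ on the right-hand side.
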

		\begin{proof}
			The proof follows \cite[Lemmas 3.5 and 3.6]{hwx17}, and we sketch it for readability. From standard finite element interpolation theory \cite{Ci78, EG04} and \cref{extension}, for $i = 1, 2$ and $j = 0, 1, \ldots, p+1$, we have
			\begin{align}\label{etai1}
			\bigg(\sum_{K\in\cM_{h,i}}\norm{\tilde{v}_i-I_h \tilde{v}_i}_{j,K}^2\bigg)^{\frac12}\lesssim h^{p+1-j}\abs{\tilde{v}_i}_{p+1,\Om_{h,i}}\lesssim h^{p+1-j}\norm{v}_{p+1,\Om_i}.
			\end{align}
			Let $\eta = v-\tilde I_h v$ and $\eta _i= \eta |_{\Om_i}= \tilde{v}_i-I_h \tilde{v}_i$ for $i=1,2$. We estimate each term in $\norme{\eta}$. First,
			\be\label{Thm-Err-estimates-H1-6}
				\big\|\al^{\frac{1}{2}}\nabla\eta \big\|_{0,\Omega_1\cup\Omega_2}^2=\sum_{i=1}^2\big\|\al_i^{\frac{1}{2}}\nabla\eta_i
				\big\|_{0,\Om_i}^2\lesssim h^{2p}\big\|\al^{\frac{1}{2}} v\big\|_{p+1,\oo}^2.
			\ee
			Using \cref{Lem-WX10-1}, \cref{etai1}, and the inequality $\{\alpha\}_w \leq 2\alpha_i$ for $i = 1, 2$, we bound the jump terms:
			\begin{align}\label{Thm-Err-estimates-H1-7}
				\norm{[\eta]}_{0,\Ga\cup\mathring{\mathcal{E}_h^1}\cup\mathring{\mathcal{E}_h^2}}^2
				\lesssim \sum_{i=1}^2\sum_{K\in\cMm^{\Ga}}\norm{\eta_i}_{0,\p(K\cap\Om_i)}^2
                \lesssim  \frac{h^{2p+1}}{\{\al\}_w} \big\|\al^{\frac{1}{2}} v\big\|^2_{p+1,\Omega_1\cup\Omega_2}.
			\end{align}
			Applying \cref{Lem-WX10-1} and \cref{etai1} again, the flux term satisfies
			\begin{align}\label{Thm-Err-estimates-H1-8}
				\norm{\{\al\nabla \eta\cdot\bbn\}_w}_{0,\Ga\cup\mathring{\mathcal{E}_h^1}\cup\mathring{\mathcal{E}_h^2}}^2&\lesssim
            \sum_{i=1}^2\sum_{K\in\cMm^{\Ga}}\norm{\al_iw_i\nabla \eta_i}^2_{0,\p(K\cap\Om_i)}\\
				&\lesssim  \{\al\}_wh^{2p-1} \big\|\al^{\frac{1}{2}} v\big\|^2_{p+1,\Omega_1\cup\Omega_2}.\nn
			\end{align}
			Combining \cref{Thm-Err-estimates-H1-6}--\cref{Thm-Err-estimates-H1-8} yields
			\begin{align*}
				\normee{ v-\tilde I_h v}\lesssim\big(\ga+\ga^{-1}\big)^\frac12 h^p\big\|\al^{\frac12}v\big\|_{p+1,\oo},\quad\forall v\in
				H^{p+1}(\oo).
			\end{align*}
			Let $q = (q_1, q_2)$ be a pair of $\mathcal{Q}_p$ polynomials in each $\Omega_i$. Noting that $\tilde{I}_h(v + q) - q \in V_h$, we obtain
			\begin{align*}
			\inf_{v_h\in V_h}\norme{v-v_h}&\le\normee{v+q-\tilde I_h(v+q)}\lesssim\big(\ga+\ga^{-1}\big)^\frac12 h^p\big\|\al^{\frac12}(v+p)\big\|_{p+1,\oo}.
			\end{align*}
			The result then follows from the Deny-Lions lemma \cite{EG04}.
			\begin{align*}
			\end{align*}
		\end{proof}

		\subsection{$H^1$ and $L^2$ error estimate}
		
		The following lemma establishes the continuity and coercivity of the bilinear form $a_h(\cdot,\cdot)$.
		\begin{lemma}\label{Thm-ah} We have
			\be\label{Thm-ah-continuity}
				|a_h( u, v)|\leq2\norme{u}\norme{ v},\qquad \forall  u, v\in  V.
			\ee
			Suppose $0<h\leq h_0$.
			Then there exists a constant $\ga_0>0$ independent of $h$, the coefficient $\al$, and the mesh $\cM_h$, such that for
			$\gamma\geq \ga_0$
			\be\label{Thm-Ah-coercivity}
			  a_h( v_h, v_h)\geq\frac12\norme{v_h}^2,\qquad\forall  v_h\in V_h,
			\ee
			where the constant $h_0$ is from \cref{Lem-WX10}.
		\end{lemma}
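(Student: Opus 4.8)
The plan is to prove the two bounds separately, in each case reducing every boundary integral in $a_h(\cdot,\cdot)$ to the three squared pieces that constitute $\norme{\cdot}^2$: the weighted gradient, the scaled jump, and the scaled flux.

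For the continuity \cref{Thm-ah-continuity} I would estimate each of the four terms of $a_h(u,v)$ by Cauchy--Schwarz. The volume term gives $\norm{\al^{\frac12}\nabla u}_{0,\oo}\norm{\al^{\frac12}\nabla v}_{0,\oo}$; the penalty term, splitting $\frac{\ga\{\al\}_w}{h}$ symmetrically as $(\ga\{\al\}_w/h)^{\frac12}\cdot(\ga\{\al\}_w/h)^{\frac12}$, produces the product of the jump pieces of $u$ and $v$; and each flux term is handled by pairing one factor $(h/\ga\{\al\}_w)^{\frac12}$ with the flux and the complementary $(\ga\{\al\}_w/h)^{\frac12}$ with the jump, so that the flux piece of one argument meets the jump piece of the other. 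Collecting the four estimates, $|a_h(u,v)|\le \mathbf a^{\top}M\mathbf b$, where $\mathbf a,\mathbf b$ are the vectors of the three norm-components of $u$ and $v$ and $M$ is the symmetric coupling matrix whose only off-diagonal entries link the flux component to the jump component. Its spectral radius is the golden ratio $(1+\sqrt5)/2<2$, which delivers the constant $2$ (the bound is not sharp, but $2$ suffices).

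The core of the lemma is the coercivity \cref{Thm-Ah-coercivity}. Taking $\beta=1$ and $u=v=v_h$, the two flux terms coincide, so
\eqn{a_h(v_h,v_h)=\norm{\al^{\frac12}\nabla v_h}_{0,\oo}^2+\frac{\ga\{\al\}_w}{h}\norm{[v_h]}_{0,\Ga\cup\mathring{\mathcal{E}_h^1}\cup\mathring{\mathcal{E}_h^2}}^2-2\int_{\Ga\cup\mathring{\mathcal{E}_h^1}\cup\mathring{\mathcal{E}_h^2}}\{\al\nabla v_h\cdot\bbn\}_w[v_h].}
The essential step is a coefficient-robust bound of the flux piece by the gradient piece. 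Here the harmonic weights are decisive: a Cauchy--Schwarz on the two-term weighted average combined with the identity $w_1\al_1+w_2\al_2=\{\al\}_w$ (from \cref{harmonic-weights}) yields the pointwise bound $|\{\al\nabla v_h\cdot\bbn\}_w|^2\le\{\al\}_w\sum_i w_i\al_i|\nabla v_{h,i}|^2$. Applying the inverse trace inequality \cref{Lem-WX10-1} to each component of $\nabla v_{h,i}\in\cQ_p(K)$, summing over $\cMm^{\Ga}$, and using quasi-uniformity ($h\eqsim h_K$), I obtain
\eqn{\frac{h}{\ga\{\al\}_w}\norm{\{\al\nabla v_h\cdot\bbn\}_w}_{0,\Ga\cup\mathring{\mathcal{E}_h^1}\cup\mathring{\mathcal{E}_h^2}}^2\le\frac{C_*}{\ga}\norm{\al^{\frac12}\nabla v_h}_{0,\oo}^2,}
where $C_*$ depends only on $\ctr$ and the quasi-uniformity constant; crucially the factor $\{\al\}_w$ cancels, so $C_*$ is independent of $\al$ and of the interface position. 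I would then apply Young's inequality $2PQ\le\ep^{-1}(h/\ga\{\al\}_w)P^2+\ep(\ga\{\al\}_w/h)Q^2$ to the cross term, with $P,Q$ the flux and jump $L^2$-norms, absorb the first summand through the displayed flux bound, take $\ep=\tfrac12$, and choose $\ga_0=5C_*$. This retains at least half of both the gradient and the jump pieces while also dominating the $\tfrac12$-multiple of the flux piece of $\norme{v_h}^2$ (controlled by the same flux bound), giving $a_h(v_h,v_h)\ge\tfrac12\norme{v_h}^2$.

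The main obstacle is precisely the coefficient-robust flux estimate above: the difficulty lies not in the inverse trace inequality but in arranging the weighting so that the constant is uniform in the contrast $\al_1/\al_2$ and in the location of $\Ga$ relative to the mesh. The identity $w_1\al_1+w_2\al_2=\{\al\}_w$ furnished by the harmonic weights is exactly what cancels the $\{\al\}_w$ in the denominator of the flux norm; with arithmetic weights this cancellation fails and $C_*$ would degrade like $\al_{max}/\al_{min}$. A final routine subtlety is that the faces in $\mathring{\mathcal{E}_h^1}\cup\mathring{\mathcal{E}_h^2}$ are interior faces of the macro-elements, carrying arithmetic weights and a single value of $\al$; there the same Cauchy--Schwarz and inverse-trace argument applies verbatim (indeed more simply), and the elementwise bounds summed over $\cMm^{\Ga}$ cover all the boundary integrals appearing in $a_h(\cdot,\cdot)$.
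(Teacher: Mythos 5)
Your proposal is correct and follows essentially the same route as the paper: expand $a_h(v_h,v_h)$ against $\norme{v_h}^2$, treat the cross term by Young's inequality with the same weighting, and control the flux piece by the inverse trace inequality \cref{Lem-WX10-1}, where the harmonic-weight identity $w_1\al_1+w_2\al_2=\{\al\}_w$ cancels $\{\al\}_w$ and makes the constant (hence $\ga_0\sim C_{\mathrm{tr}}^2$, your $5C_*$ versus the paper's $6C_{\mathrm{tr}}^2$) independent of the contrast and of the interface position. Your continuity argument via the spectral radius of the coupling matrix is a slightly sharper piece of bookkeeping (constant $(1+\sqrt5)/2$) than the paper's direct Cauchy--Schwarz, but it is not a different method.
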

		\begin{proof} The continuity \cref{Thm-ah-continuity} follows directly from the Cauchy-Schwarz inequality. For coercivity, starting from \cref{ah} and \cref{norm},
			\begin{align*}
			  a_h( v_h, v_h)
			  =&\norme{v_h}^2-2\int_{\Ga\cup\mathring{\mathcal{E}_h^1}\cup\mathring{\mathcal{E}_h^2}} \{\al\nabla v_h\cdot\bbn\}_w[v_h]-\frac{h}{\gamma \{\al\}_w }\norm{\{\al\nabla
			  v_h\cdot\bbn\}_w}_{0,\Ga\cup\mathring{\mathcal{E}_h^1}\cup\mathring{\mathcal{E}_h^2}}^2\\
			  \ge& \norme{v_h}^2-\frac{\gamma \{\al\}_w }{2h}\norm{[v_h]}_{0,\Ga\cup\mathring{\mathcal{E}_h^1}\cup\mathring{\mathcal{E}_h^2}}^2-\frac{3h}{\gamma \{\al\}_w
			  }\norm{\{\al\nabla v_h\cdot\bbn\}_w}_{0,\Ga\cup\mathring{\mathcal{E}_h^1}\cup\mathring{\mathcal{E}_h^2}}^2.
			\end{align*}
			 Using the trace inequality of \cref{Lem-WX10-1}:
			\begin{equation}\label{norm3rdterm}
			  \frac{h}{\gamma \{\al\}_w }\norm{\{\al\nabla v_h\cdot\bbn\}_w}_{0,\Ga\cup\mathring{\mathcal{E}_h^1}\cup\mathring{\mathcal{E}_h^2}}^2\leq \frac{C_{\mathrm{tr}}^2}{\ga}\big\|\al^{\frac{1}{2}}\nabla v_h\big\|^2_{0,\Omega_1\cup\Omega_2}.
			\end{equation}
			Combining these estimates, we derive that
			\begin{align*}
			  a_h( v_h, v_h)\ge& \norme{v_h}^2-\max\Big\{\frac12,\frac{3C_{\mathrm{tr}}^2}{\gamma}\Big\}\norme{v_h}^2.
			\end{align*}
			By choosing $\gamma_0 = 6C_{\mathrm{tr}}^2$, the coercivity result in \cref{Thm-Ah-coercivity} follows.
		\end{proof}

		The following C\'{e}a lemma shows the discrete solution error is controlled by the best approximation error from $V_h$.
		\begin{lemma}\label{Cea}
			Suppose $0<h\leq h_0$ and $\ga\ge\ga_0$. Then
			\begin{align*}
				\norme{u-u_h}\le 5\inf_{v_h\in V_h}\norme{u-v_h}.
			\end{align*}
		\end{lemma}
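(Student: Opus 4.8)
The plan is to invoke the classical Céa argument, combining the coercivity and continuity of $a_h(\cdot,\cdot)$ from \cref{Thm-ah} with the Galerkin orthogonality \cref{Lem-Weak-Galerkin-orthogonality-0}. Since the hypotheses $\gamma\ge\gamma_0$ and $0<h\le h_0$ are assumed, both the continuity bound \cref{Thm-ah-continuity} and the coercivity bound \cref{Thm-Ah-coercivity} are at our disposal, and in particular the discrete problem \cref{Discrete-Pro} is well posed on $V_h$, so $u_h$ is a legitimate object to compare against.

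First I would fix an arbitrary $v_h\in V_h$ and set $w_h:=v_h-u_h$, which again lies in $V_h$. Applying the coercivity estimate \cref{Thm-Ah-coercivity} to $w_h$ gives $\tfrac12\norme{w_h}^2\le a_h(w_h,w_h)$. The key algebraic step is to split $w_h=(v_h-u)+(u-u_h)$ inside the first argument and use bilinearity to write $a_h(w_h,w_h)=a_h(v_h-u,w_h)+a_h(u-u_h,w_h)$. Because $w_h\in V_h$, the Galerkin orthogonality \cref{Lem-Weak-Galerkin-orthogonality-0} annihilates the second term, leaving $\tfrac12\norme{w_h}^2\le a_h(v_h-u,w_h)$.

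Next I would bound the surviving term by the continuity estimate \cref{Thm-ah-continuity}, obtaining $\tfrac12\norme{w_h}^2\le 2\,\norme{u-v_h}\,\norme{w_h}$, whence $\norme{w_h}\le 4\,\norme{u-v_h}$ (the case $\norme{w_h}=0$ being trivial). A triangle inequality on $u-u_h=(u-v_h)+w_h$ then yields $\norme{u-u_h}\le\norme{u-v_h}+\norme{w_h}\le 5\,\norme{u-v_h}$, and taking the infimum over $v_h\in V_h$ completes the proof.

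No serious obstacle is anticipated here: the argument is the standard symmetric-coercive Céa estimate, and the only point requiring care is the bookkeeping of constants. The coercivity constant $\tfrac12$ and the continuity constant $2$ combine to produce the factor $4$ on the discrete component $\norme{w_h}$, so that the triangle inequality yields exactly the stated constant $5$. One should merely verify that $w_h$ genuinely belongs to $V_h$ (so that orthogonality applies) and that $u\in V$ while $u_h\in V_h\subset V$, ensuring that both the continuity bound and the broken norm $\norme{\cdot}$ are well defined on every quantity appearing in the estimate.
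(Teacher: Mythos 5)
Your proposal is correct and follows essentially the same route as the paper's own proof: coercivity applied to the discrete difference, Galerkin orthogonality to swap $u_h$ for $u$ in the first argument, continuity with constant $2$, and a final triangle inequality, yielding the constants $4$ and $5$ exactly as in the paper. The only cosmetic difference is your sign convention $w_h=v_h-u_h$ versus the paper's $\eta_h=u_h-v_h$, which is immaterial.
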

		\begin{proof} For any $v_h\in V_h$, let $\eta_h=u_h-v_h$.
			By \cref{Lem-Weak-Galerkin-orthogonality-0} and \cref{Thm-ah}:
			\begin{align*}
				\norme{\eta_h}^2\le &2 a_h(\eta_h,\eta_h)=2 a_h(u-v_h,\eta_h)
				\le 4\norme{u-v_h}\norme{\eta_h}.
			\end{align*}
			Therefore, $\norme{u-u_h}\le\norme{u-v_h}+\norme{\eta_h}\le 5\norme{u-v_h}$.
		\end{proof}

		The following $H^1$-error estimate is a direct consequence of \cref{bestapproximation} and \ref{Cea}.
		\begin{theorem}\label{Thm-Err-estimates-H1}
			Suppose $0<h\leq h_0$, $\ga\ge\ga_0$, and $u \in H^{p+1}(\Omega)$ solves \cref{elliptic-interface}. Then there holds the following error estimate
			\be\label{Thm-Err-estimates-H1-0}
				\norme{u- u_h}\lesssim\ga^\frac12 h^p\big|\al^{\frac12}u\big|_{p+1,\oo}.
			\ee
		\end{theorem}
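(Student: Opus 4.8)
The plan is to chain the two preceding results, since the statement is essentially immediate once \cref{Cea} and \cref{bestapproximation} are in hand. Because $u$ solves \cref{elliptic-interface}, it satisfies the homogeneous boundary condition $u|_{\p\Om}=0$ and belongs (at least piecewise) to $H^{p+1}(\oo)$, so the hypotheses of both lemmas are met. First I would invoke the C\'ea estimate of \cref{Cea} to reduce the discretization error to the best-approximation error,
\[\norme{u-u_h}\le 5\inf_{v_h\in V_h}\norme{u-v_h},\]
and then apply \cref{bestapproximation} with $v=u$ to bound the right-hand side by $(\ga+\ga^{-1})^{\frac12}h^p\big|\al^{\frac12}u\big|_{p+1,\oo}$.

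The only remaining point is to absorb the prefactor $(\ga+\ga^{-1})^{\frac12}$ into $\ga^{\frac12}$. Since $\ga\ge\ga_0=6\ctr^2$ is bounded below by a fixed positive constant, one has $\ga^{-1}\le\ga_0^{-1}\le\ga_0^{-2}\ga$, so that $\ga+\ga^{-1}\le(1+\ga_0^{-2})\ga$ and hence $(\ga+\ga^{-1})^{\frac12}\ls\ga^{\frac12}$, with the hidden constant depending only on $\ctr$ and therefore independent of $h$, $\al$, and the interface location. Combining the displays then yields the asserted bound.

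As all the analytic content has already been established in \cref{Cea} (coercivity and continuity of $a_h$ together with the Galerkin orthogonality \cref{Lem-Weak-Galerkin-orthogonality-0}) and in \cref{bestapproximation} (the interpolation estimates on cut elements via \cref{Lem-WX10}), I expect no genuine obstacle: the theorem follows by direct substitution. The single point deserving care is the $\ga$-dependence, namely verifying that the lower bound $\ga\ge\ga_0$ really collapses $(\ga+\ga^{-1})^{\frac12}$ to $\ga^{\frac12}$ rather than leaving a spurious $\ga^{-\frac12}$ contribution; this is the one place where the assumption $\ga\ge\ga_0$ is essential.
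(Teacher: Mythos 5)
Your proposal is correct and matches the paper exactly: the paper states this theorem as a direct consequence of \cref{bestapproximation} and \cref{Cea}, which is precisely the chain you carry out. Your extra care in absorbing $(\ga+\ga^{-1})^{\frac12}$ into $\ga^{\frac12}$ via the lower bound $\ga\ge\ga_0$ is a detail the paper leaves implicit, and you handle it correctly.
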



		For $L^2$-error estimates, we employ the Aubin-Nitsche trick:
		\begin{theorem}\label{Thm-Err-estimates-L2}
			Under the conditions of \cref{Thm-Err-estimates-H1}, there holds
			\bn
 				\| u- u_h\|_{0,\Omega}\lesssim \ga h^{p+1}\al_{\min}^{-\frac12}\big|\al^{\frac{1}{2}} u\big|_{p+1,\Omega_1\cup\Omega_2}.
			\en
		\end{theorem}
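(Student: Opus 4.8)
The plan is to run a duality (Aubin--Nitsche) argument, exploiting the symmetry of $a_h(\cdot,\cdot)$ in the case $\beta=1$. Set $e=u-u_h$ and introduce the dual problem: find $\phi\in V$ solving $-\nabla\cdot(\al\nabla\phi)=e$ in $\oo$, with homogeneous jump data $[\phi]=0$ and $[\al\nabla\phi\cdot\bbn]=0$ on $\Ga$, and $\phi=0$ on $\p\Om$. Since $\Om$ is convex and $\Ga$ is $C^2$, elliptic interface regularity gives $\phi\in H^2(\oo)$ together with the coefficient-tracked bound
\[
|\al^{\frac12}\phi|_{2,\oo}\lesssim \al_{\min}^{-\frac12}\|e\|_{0,\Om}.
\]
The factor $\al_{\min}^{-\frac12}$ appears because, dividing the equation by $\al_i$ in each subdomain, one has $\|\phi\|_{2,\Om_i}\lesssim \al_i^{-1}\|e\|_{0,\Om_i}$, so that $\al_i|\phi|_{2,\Om_i}^2\lesssim \al_{\min}^{-1}\|e\|_{0,\Om_i}^2$ and summing over $i$ yields the displayed estimate.

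Next I would verify that $\phi$ satisfies the discrete-consistent weak form $a_h(w,\phi)=(e,w)_{0,\Om}$ for all $w\in V$. Testing the dual PDE by $w$ and integrating by parts exactly as in the derivation of \cref{Variational-Pro} gives $\int_{\oo}\al\nabla\phi\cdot\nabla w-\int_{\Ga\cup\mathring{\mathcal{E}_h^1}\cup\mathring{\mathcal{E}_h^2}}\{\al\nabla\phi\cdot\bbn\}_w[w]=(e,w)_{0,\Om}$, where the interface source term drops out because $g_N=0$. Since $\phi\in H^2(\oo)$ is continuous within each subdomain, the jumps $[\phi]$ vanish on the internal faces $\mathring{\mathcal{E}_h^i}$, while $[\phi]=0$ on $\Ga$ by the homogeneous Dirichlet jump; hence the penalty term and the $[\phi]\{\al\nabla w\cdot\bbn\}_w$ term in \cref{ah} both vanish, and the identity reduces to $a_h(\phi,w)=(e,w)_{0,\Om}$. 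By the symmetry of $a_h$ at $\beta=1$ this is equivalent to $a_h(w,\phi)=(e,w)_{0,\Om}$.

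Choosing $w=e$ then gives $\|e\|_{0,\Om}^2=a_h(e,\phi)$. The Galerkin orthogonality \cref{Lem-Weak-Galerkin-orthogonality-0} allows subtracting any discrete function, so with the interpolant $\tilde I_h\phi\in V_h$ (well defined since $\phi$ is piecewise $H^2$ with $\phi|_{\p\Om}=0$) one gets $\|e\|_{0,\Om}^2=a_h(e,\phi-\tilde I_h\phi)$. Applying the continuity \cref{Thm-ah-continuity} and then the interpolation estimate established in the proof of \cref{bestapproximation}, now used at the $H^2$ level (for $p\ge1$ the space $\cQ_p$ reproduces affine functions, so one order of approximation is extracted from the $H^2$ regularity of $\phi$), yields
\[
\|e\|_{0,\Om}^2\le 2\norme{e}\,\norme{\phi-\tilde I_h\phi}\lesssim \norme{e}\,\ga^{\frac12}h\,|\al^{\frac12}\phi|_{2,\oo},
\]
where $(\ga+\ga^{-1})^{\frac12}\lesssim\ga^{\frac12}$ since $\ga\ge\ga_0$. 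Inserting the $H^1$ estimate $\norme{e}\lesssim\ga^{\frac12}h^p|\al^{\frac12}u|_{p+1,\oo}$ from \cref{Thm-Err-estimates-H1} and the regularity bound for $\phi$, and cancelling one factor of $\|e\|_{0,\Om}$, produces the claimed estimate $\|e\|_{0,\Om}\lesssim \ga h^{p+1}\al_{\min}^{-\frac12}|\al^{\frac12}u|_{p+1,\oo}$.

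The main obstacle is the elliptic interface regularity estimate for the dual solution with the sharp $\al_{\min}^{-\frac12}$ coefficient dependence: establishing $H^2(\oo)$ regularity for a problem with discontinuous coefficient and curved $C^2$ interface, while tracking the dependence on $\al_{\min}$ uniformly, is the delicate ingredient, resting on the convexity of $\Om$ and the smoothness of $\Ga$. A secondary point requiring care is the consistency verification above, where the vanishing of every $[\phi]$-term must be justified from the homogeneous jump data together with the subdomain-wise continuity of $\phi$.
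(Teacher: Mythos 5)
Your proposal is correct and follows essentially the same route as the paper's proof: an Aubin--Nitsche duality argument with the identical dual interface problem, the same coefficient-weighted $H^2$ regularity bound, Galerkin orthogonality \cref{Lem-Weak-Galerkin-orthogonality-0}, continuity \cref{Thm-ah-continuity}, and the interpolation estimate of \cref{bestapproximation} applied at the $H^2$ level. The one caveat is that the regularity estimate $\big|\al \varphi\big|_{2,\oo}\lesssim \|u-u_h\|_{0,\Om}$ (equivalent to your $\al_{\min}^{-\frac12}$ bound) is cited by the paper from \cite{hz07} rather than proved; your per-subdomain ``divide by $\al_i$'' heuristic is not a valid derivation of it, since the subproblems couple at the interface, but you correctly flag this as the delicate ingredient that must be established separately.
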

		\begin{proof} Let $\varphi$ solve the adjoint problem:
			\begin{align}\label{auxiliary-problem}
  				\left \{\ba {ll}-\nabla \cdot (\al\nabla \varphi ) =  u- u_h \quad & {\rm in} \quad \Omega_1\cup\Omega_2, \\
      			\jm{\varphi}= 0 , \quad\big[(\al\nabla \varphi)\cdot\bbn\big]= 0 \quad & {\rm on} \quad \Gamma, \\
       			\varphi = 0 \quad & {\rm on} \quad \p\Omega.\ea\right.
  			\end{align}
			From regularity estimate \cite{hz07}:
			\be\label{Thm-Err-estimates-L2-1}
 				\big|\al \varphi \big|_{j,\Omega_1\cup\Omega_2}\lesssim \big\| u- u_h\big\|_{0,\Omega}, \quad j=1, 2.
			\ee
			Therefore, by \cref{bestapproximation},
			\be\label{Thm-Err-estimates-L2-3}
				\inf_{\varphi_h\in V_h}\norme{\varphi-\varphi_h}\lesssim\ga^\frac12 h\big|\al^{\frac12}\varphi\big|_{2,\oo}\lesssim\ga^\frac12 \al_{\min}^{-\frac12}
				h\big\| u- u_h\big\|_{0,\Omega}.
			\ee
			Testing \cref{auxiliary-problem} with $\eta = u - u_h$ and using \cref{Lem-Weak-Galerkin-orthogonality-0},
			\cref{Thm-ah-continuity},  and \cref{Thm-Err-estimates-L2-3}, we get
			\begin{align*}
 				 \|\eta\|^2_{0,\Omega}&=a_h( u- u_h, \varphi )=\inf_{\varphi_h\in V_h}a_h( u- u_h, \varphi -\varphi_h)\\
  				 &\lesssim \ga^\frac12\al_{\min}^{-\frac12} h\norme{u- u_h}\big\| u- u_h\big\|_{0,\Omega}.
			\end{align*}
			The result follows using \cref{Thm-Err-estimates-H1}.
		\end{proof}


		\subsection{Flux error estimate}\label{FLUX}

	We consider the approximation error of the flux $\al\nabla u_h$. To handle the jump term on ``small'' faces, we establish the following trace transfer lemma. For ease of presentation, we consider a 2D configuration in \cref{K1K2}, where $K_1 \cap \Omega_i$ is ``small'' while $K_2$ and $K'$ are ``large'' elements. Merging $K_1$ with $K'$ ensures $|M_i(K_1)\cap\Om_i|\eqsim |M_i(K_1)| > |K_1|$.
	\begin{lemma}\label{traceTransform}
		For any $v_h\in V_h$, there holds
		\[\|[v_h]\|_{0,PP_1}^2\lesssim h\|\nabla v_h\|_{0,M_i(K_1)\cup K_2\cup K_3}^2+\|[v_h]\|_{0,PP_3}^2.\]
	\end{lemma}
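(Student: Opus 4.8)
The plan is to transfer the $L^2$-mass of the jump $[v_h]$ from the ``small'' segment $PP_1$ to the ``large'' segment $PP_3$ by a one-dimensional Newton--Leibniz argument carried out along short paths that sweep across the patch $\mathcal D:=M_i(K_1)\cup K_2\cup K_3$. Writing $[v_h]=v_{h,1}-v_{h,2}$, I note that on each subdomain side the restriction of $v_h$ is a piecewise $\mathcal Q_p$ polynomial, so $\nabla[v_h]=\nabla v_{h,1}-\nabla v_{h,2}$ and $|\nabla[v_h]|\le|\nabla v_{h,1}|+|\nabla v_{h,2}|$. On the small side, $v_{h,i}$ is a single polynomial on the whole macro-element $M_i(K_1)$, so a polynomial norm-equivalence on $M_i(K_1)$—valid precisely because the merging guarantees $|M_i(K_1)\cap\Om_i|\eqsim|M_i(K_1)|$ despite the tiny cut $K_1\cap\Om_i$—converts the polynomial extensions back into the broken gradient, giving $\|\nabla[v_h]\|_{0,\mathcal D}\lesssim\|\nabla v_h\|_{0,\mathcal D}$. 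This is the step where the merging hypothesis is essential to keep all constants independent of the interface position.

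First I would set up the geometric sweep. For each $x\in PP_1$ I associate a point $\Phi(x)\in PP_3$ and a path $\Sigma_x\subset\mathcal D$ joining $x$ to $\Phi(x)$, chosen so that: (i) $|\Sigma_x|\lesssim h$; (ii) $\Phi:PP_1\to PP_3$ has bounded distortion, i.e. $\int_{PP_1}g(\Phi(x))\,\mathrm ds_x\lesssim\int_{PP_3}g\,\mathrm ds$ for every nonnegative $g$; and (iii) the family $\{\Sigma_x\}_{x\in PP_1}$ covers $\mathcal D$ with uniformly bounded multiplicity and bounded Jacobian. Concretely one may take $\Sigma_x$ to be the straight segment issuing from $x$ in the direction transverse to $PP_1$ and terminating on $PP_3$; the configuration of \cref{K1K2} (three consecutive background cells of comparable size with $P$ a shared vertex) makes such a choice admissible.

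Next, along each $\Sigma_x$ I apply Newton--Leibniz to $[v_h]$ followed by Cauchy--Schwarz,
\[
 |[v_h](x)|^2\le 2|[v_h](\Phi(x))|^2+2|\Sigma_x|\int_{\Sigma_x}|\nabla[v_h]|^2\,\mathrm d\ell
 \lesssim |[v_h](\Phi(x))|^2+h\int_{\Sigma_x}|\nabla[v_h]|^2\,\mathrm d\ell,
\]
and then integrate over $x\in PP_1$. The first term is controlled by $\|[v_h]\|_{0,PP_3}^2$ using the bounded distortion (ii), while the second is controlled by a Fubini/co-area estimate together with (iii), yielding
\[
 \|[v_h]\|_{0,PP_1}^2\lesssim\|[v_h]\|_{0,PP_3}^2+h\int_{\mathcal D}|\nabla[v_h]|^2\lesssim\|[v_h]\|_{0,PP_3}^2+h\|\nabla v_h\|_{0,\mathcal D}^2,
\]
which is the assertion.

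The main obstacle is that $[v_h]$ need not be smooth across all of $\mathcal D$: a sweeping path $\Sigma_x$ may cross an interior DG face between merged macro-elements, or a DG--FE coupling face, where $v_{h,1}$ or $v_{h,2}$ is itself discontinuous, so the Newton--Leibniz step formally produces extra contributions at these crossings. I expect the resolution to follow one of two routes—either routing the paths so that they stay within a single polynomial piece on each side (feasible on the small side, since $v_{h,i}$ is one $\mathcal Q_p$ polynomial on the whole of $M_i(K_1)$, where the small cut lives), or, when a crossing is unavoidable, absorbing the crossing jump by the same mechanism and replacing the plain gradient by the full broken gradient over $\mathcal D$. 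The other delicate point is verifying that the distortion and multiplicity constants in (ii)--(iii) are independent of where $\Gamma$ meets the mesh, which is exactly what the merging criterion of \cref{mergingSection} secures.
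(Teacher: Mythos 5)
Your sweeping/Newton--Leibniz machinery is sound as far as it goes---it is, in effect, the standard proof of the single-element transfer inequality $\|w\|_{0,e_1}^2\lesssim h\|\nabla w\|_{0,K}^2+\|w\|_{0,e_2}^2$---but the proposal has a genuine gap, and it sits exactly at the point you label ``the main obstacle'' and defer. First, a misidentification: $PP_1$ and $PP_3$ are faces in $\mathring{\cE_h^i}$, i.e.\ they lie \emph{inside} the single subdomain $\Om_i$, so the jump $[v_h]$ there is not the interface jump $v_{h,1}-v_{h,2}$ you write down; it is the jump of the one subdomain function $v_{h,i}$ across a face separating two \emph{different polynomial pieces}: the macro-element polynomial $v_1:=v_h|_{M_i(K_1)}$ on one side, and $v_2:=v_h|_{K_2}$ (on $PP_1$), respectively $v_3:=v_h|_{K_3}$ (on $PP_3$), on the other. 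Consequently the jump on $PP_1$ is (up to sign) $v_2-v_1$ while the jump on $PP_3$ is $v_3-v_1$---differences of \emph{different pairs}---so there is no single globally defined function whose Newton--Leibniz transport connects the two ends, and your bound $\|\nabla[v_h]\|_{0,\mathcal D}\lesssim\|\nabla v_h\|_{0,\mathcal D}$ concerns the wrong object. Neither of your two suggested fixes closes this: paths cannot stay within one polynomial piece, since every path from $PP_1$ to $PP_3$ inside $\mathcal D$ either crosses the common edge of $K_2$ and $K_3$ (containing $PP_2$) or passes through $M_i(K_1)$, where any candidate transported function such as $v_{h,i}-v_1$ vanishes identically and so carries no information; and a crossing of $PP_2$ produces a \emph{trace} term on $PP_2$, which cannot be ``absorbed into a broken gradient.''

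The paper's proof supplies precisely the missing device: subtract the \emph{polynomial extension} $\tilde v_1$ of $v_1$ from $v_{h,i}$ on the whole patch. The function $\phi=v_{h,i}-\tilde v_1$ is then a genuine $H^1$ function on $K_2\cup K_3$ (its jump across $PP_2$ equals $[v_h]|_{PP_2}$, which vanishes because $K_2$ and $K_3$ lie in the conforming FE region), its trace on $PP_1$ is the jump $w_2=v_2-\tilde v_1$, and its trace on $PP_3$ is \emph{exactly} $[v_h]|_{PP_3}$, because $v_1$ is a single polynomial on all of $M_i(K_1)$ and $PP_3\subset\p M_i(K_1)$. The paper then chains two single-element transfers ($w_2$ through $K_2$ from $PP_1$ to $PP_2$, then $w_3=v_3-\tilde v_1$ through $K_3$ from $PP_2$ to $PP_3$), and finally invokes $\|\nabla\tilde v_1\|_{0,K_2\cup K_3}\lesssim\|\nabla v_1\|_{0,M_i(K_1)}$, a polynomial norm equivalence that is legitimate only because merging makes $M_i(K_1)$ a full-size rectangle; this last ingredient is the one piece you did identify correctly. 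Without the subtraction of $\tilde v_1$---the step that converts two unrelated jumps into boundary values of one $H^1$ function---your sweep cannot be made to produce the asserted estimate.
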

	\begin{figure}[!htbp]
		\centering
			\includegraphics[scale=.5]{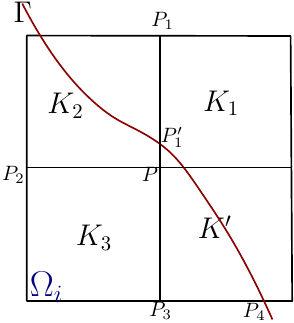}
		\caption{Transfer the trace from short edge to long edge.}
		\label{K1K2}
	\end{figure}
	\begin{proof}
		 Let $v_1 = v_h|_{M_i(K_1)}$, $v_2 = v_h|_{K_2}$, and $v_3 = v_h|_{K_3}$. Define the differences $w_i = v_i - v_1$ for $i=2,3$, where $v_1$ is naturally extended to adjacent elements. Considering the short edge $PP_1'=M_i(K_1) \cap M_i(K_2)$, we bound the jump term along this edge using terms that are confined to $\Omega_i$. In fact, we can derive that
		\begin{align*}
		\|[v_h]\|_{0,PP_1}^2&=\|w_2\|_{0,PP_1}^2\lesssim h\|\nabla w_2\|_{0,K_2}^2+\|w_2\|_{0,PP_2}^2\\
		&\lesssim h\|\nabla w_2\|_{0,K_2}^2+\|[v_h]\|_{0,PP_2}^2+\|w_3\|_{0,PP_2}^2\\
		&\lesssim h\|\nabla w_2\|_{0,K_2}^2+\|[v_h]\|_{0,PP_2}^2+h\|\nabla w_3\|_{0,K_3}^2+\|[v_h]\|_{0,PP_3}^2\\
		&\lesssim h(\|\nabla v_1\|_{0,M_i(K_1)}^2+\|\nabla v_2\|_{0,K_2}^2+\|\nabla v_3\|_{0,K_3}^2)+\|[v_h]\|_{0,PP_3}^2.
		\end{align*}
	\end{proof}

	We require a discrete
	extension operator $\mathfrak{E} _{h,i}$ mapping $V_{h,i}$
	onto $V_h$. Previous works \cite{bgss16} address the linear continuous case, and \cite{hwx17} extend these results to higher-order elements. For UIPDG-FEM, we construct the following discrete extension operator:
	\begin{lemma}\label{discrete-extension-operator}
		Suppose $0<h\leq h_0$. For $i=1,2$, there exists a discrete extension
		operator $\mathfrak{E}_{h,i}: V_{h,i}\mapsto V_h$ such that for any
		$v_{h,i}\in V_{h,i}$, $\mathfrak{E}_{h,i}v_{h,i}$ only differs from
		$v_{h,i}$ in $\Om_i$ by a constant $C$, $\mathfrak{E}_{h,i} v_{h,i}$ is continuous in $\Om_{3-i}$ and the following estimates hold:
		\begin{align*}
			\norm{[\mathfrak{E}_{h,i}v_{h,i}]}_{0,\Ga}\leq C_{\Ga}\norm{[v_{h,i}]}_{0,\mathcal{E}_h^i},
			\norm{\nabla\mathfrak{E}_{h,i} v_{h,i}}_{0,\Omega}^2\leq C_{\Om_i}^2(\norm{\nabla v_{h,i}}_{0,\Omega_i}^2+\frac{1}{h}\norm{[v_{h,i}]}_{\mathcal{E}_h^i}^2).
		\end{align*}
	\end{lemma}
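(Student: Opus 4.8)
The plan is to define $w:=\mathfrak{E}_{h,i}v_{h,i}$ by retaining $v_{h,i}$ on the physical side $\Om_i$ and attaching to it, across $\Ga$, a globally continuous function on $\Om_{3-i}$ whose trace on $\Ga$ reproduces a \emph{continuous average} of $v_{h,i}$. Thus I would set $w|_{\Om_i}=v_{h,i}$, which keeps all the internal jumps of $v_{h,i}$ on $\mathcal{E}_h^i$ and gives the first property with $C=0$ (the constant freedom will only be exploited internally, to normalise the extension below). Everything then reduces to constructing the companion function on $\Om_{3-i}$ with the stated jump and gradient control.

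First I would introduce the Oswald-type averaging operator $A_h$ that maps $v_{h,i}$ to the globally continuous $\cQ_p$ function $c:=A_h v_{h,i}$ on $\cMm$, obtained by averaging the multivalued nodal values, which occur only on the merged faces $\mathcal{E}_h^i$ (on non-merged elements $v_{h,i}$ is already continuous). Since the merged macro-elements are shape-regular with $\diam M_i(K)\lesssim h_K$, the standard discontinuous Galerkin averaging estimates give $\|\nabla(v_{h,i}-c)\|_{0,\Om_{h,i}}^2\lesssim h^{-1}\|[v_{h,i}]\|_{0,\mathcal{E}_h^i}^2$ and, after the trace inequality \cref{Lem-WX10-1}, $\|v_{h,i}-c\|_{0,\Ga}^2\lesssim\|[v_{h,i}]\|_{0,\mathcal{E}_h^i}^2$. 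Taking the traces of $w$ on $\Ga$ from the two sides as $v_{h,i}$ and $c$, this last bound is exactly the jump estimate (the first inequality in the statement).

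The gradient estimate is the heart of the matter and combines two ingredients. The merging criterion of \cref{mergingSection} guarantees $|M_i(K)\cap\Om_i|\eqsim|M_i(K)|$, so for the polynomial $\nabla c$ a norm-equivalence on macro-elements yields $\|\nabla c\|_{0,M_i(K)}\lesssim\|\nabla c\|_{0,M_i(K)\cap\Om_i}$; summing and using the averaging estimate gives $\|\nabla c\|_{0,\Om_{h,i}}\lesssim\|\nabla c\|_{0,\Om_i}\lesssim\|\nabla v_{h,i}\|_{0,\Om_i}+h^{-1/2}\|[v_{h,i}]\|_{0,\mathcal{E}_h^i}$. Next I would extend $c$ from the fattened domain $\Om_{h,i}$ into the bulk of $\Om_{3-i}$: applying a bounded Sobolev extension to $c-\bar c$ (with $\bar c$ the mean of $c$ over $\Om_{h,i}$) and adding $\bar c$ back, a Poincar\'e inequality delivers seminorm-only control $|E(c)|_{1,\Om}\lesssim|c|_{1,\Om_{h,i}}$, the extension constant being uniform because $\Om_{h,i}$ is an $O(h)$-neighbourhood of the fixed Lipschitz domain $\Om_i$; since $\partial\Om\cap\partial\Om_1=\emptyset$, $E$ can be taken to preserve the homogeneous condition on $\partial\Om$. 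Finally I would discretise this continuous extension by a Scott--Zhang quasi-interpolant into $V_{h,3-i}$ that reproduces the already-discrete $c$ on $\Om_{h,i}$, so that $w|_{\Om_{3-i}}$ is continuous, leaves the $\Ga$-trace untouched, and satisfies $\|\nabla w\|_{0,\Om_{3-i}\setminus\Om_{h,i}}\lesssim|c|_{1,\Om_{h,i}}$. Adding the contributions over $\Om_i$, the overlap layer, and the bulk yields the gradient bound.

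The step I expect to be the main obstacle is this bulk extension: the gradient of the extension over essentially all of $\Om_{3-i}$ must be bounded by $\|\nabla v_{h,i}\|_{0,\Om_i}$ \emph{alone}, never by an $L^2$ norm of $v_{h,i}$, which rules out naive layer-by-layer polynomial extrapolation and forces the coupling of a seminorm-stable (Poincar\'e-normalised) continuous extension with a continuity-preserving interpolation. The delicate points are to keep the interpolation reproducing $c$ near $\Ga$ so that the interface jump bound is not polluted and the homogeneous boundary condition is respected, and to verify that the macro-element norm equivalence, hence the extension constant, is uniform in the cut position---this is precisely where the merging estimate $|M_i(K)\cap\Om_i|\eqsim|M_i(K)|$ from \cref{mergingSection} enters.
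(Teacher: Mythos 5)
Your overall architecture---Oswald (nodal) averaging of $v_{h,i}$ into a continuous $c$, control of $\nabla(v_{h,i}-c)$ and of the $\Ga$-trace by the jumps on $\mathcal{E}_h^i$, then a seminorm-stable extension of $c$ into $\Om_{3-i}$ realized discretely---is essentially the paper's proof (the paper packages your last two steps as the discrete extension operator of \cite{hwx17}). However, your handling of the constant contains a genuine error, and it is precisely the point the lemma's statement is designed around. You insist on $C=0$, keeping $w|_{\Om_i}=v_{h,i}$, and restore the mean on the other side by using $E(c-\bar c)+\bar c$ in $\Om_{3-i}$. But membership in $V_h$ forces the $\Om_{3-i}$-component to vanish on $\p\Om$ (away from the interface it is built from $U_h\subset H_0^1(\Om)$), while $E(c-\bar c)+\bar c$ equals $\bar c\neq 0$ near $\p\Om$; cutting it off to restore the boundary condition creates a gradient of size $|\bar c|$ that is not controlled by $|c|_{1,\Om_{h,i}}$. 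So your $w\notin V_h$, and this is not cosmetic: in the flux estimate (\cref{Thm-Err-estimates-flux}) the extension is inserted into the Galerkin orthogonality \cref{Lem-Weak-Galerkin-orthogonality-0}, which only holds for test functions in $V_h$.

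Moreover, for $i=1$ (where $\p\Om_1\cap\p\Om=\emptyset$) no construction with $C=0$ can satisfy the lemma, so the gap cannot be patched while keeping $w|_{\Om_1}=v_{h,1}$. Take $v_{h,1}\equiv M$ a nonzero constant: both right-hand sides in the lemma vanish, so $\mathfrak{E}_{h,1}v_{h,1}$ must have zero broken gradient, zero jump across $\Ga$, be continuous in $\Om_2$, and vanish on $\p\Om$; this forces $\mathfrak{E}_{h,1}v_{h,1}\equiv 0$, contradicting $w|_{\Om_1}=M$. The paper's construction avoids this exactly by shifting the physical side too: set $C=\frac{1}{|\Om_1|}\int_{\Om_1}I_{h,1}v_{h,1}$ when $\p\Om_i\cap\p\Om=\emptyset$ (and $C=0$ otherwise, i.e.\ for $i=2$, where Friedrichs' inequality applies because $v_{h,2}$ vanishes on $\p\Om$), put $z_h=I_{h,i}v_{h,i}-C$, extend $z_h$ by an $H^1$-bounded discrete extension $E_{h,i}$ into $U_h$ (Poincar\'e applies since $z_h$ is mean-zero or vanishes on part of $\p\Om$), and define $\mathfrak{E}_{h,i}v_{h,i}=(v_{h,i}-C)\chi_{\Om_i}+E_{h,i}z_h\,\chi_{\Om_{3-i}}$. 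With this correction, the rest of your argument (the averaging estimates, the macro-element norm equivalence via $|M_i(K)\cap\Om_i|\eqsim|M_i(K)|$, and the value-preserving quasi-interpolation of the continuous extension) goes through and coincides in substance with the paper's proof.
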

	\begin{proof}
		Let $\{x_j\}_{j=1}^J$ be the nodal points of the space
		$U_h\cdot \chi_{\Om_{h,i}}$ and $\{\psi_j\}_{j=1}^J$ be the corresponding nodal basis functions. Firstly, we interpolate $v_{h,i}$ in the continuous
		space $U_h\cdot\chi_{\Om_{h,i}}$. Actually, we take the average value on $x_j$ when the nodal point is shared by several elements. This way,
		we construct an interpolant $I_{h,i}: V_{h,i}\mapsto U_h\cdot\chi_{\Om_{h,i}}$,
		\setlength\abovedisplayskip{3pt}
		\setlength\belowdisplayskip{3pt}
		\[(I_{h,i}v_{h,i})(x)=\sum_{j=1}^J\overline{v_{h,i}(x_j)}\psi_j(x),\qquad \forall v_{h,i}\in V_{h,i}.\]
		Note that $I_{h,i}v_{h,i}$ is continuous in $\Om_i$ and is only different from $v_{h,i}$ on $\mathcal{E}_h^i$. It is easy to check that
		\setlength\abovedisplayskip{3pt}
		\setlength\belowdisplayskip{3pt}
		\[\norm{\nabla I_{h,i}v_{h,i}}_{0,\Om_i}^2\leq\norm{\nabla v_{h,i}}_{0,\Om_i}^2+\frac1h\norm{[v_{h,i}]}_{0,\mathcal{E}_h^i}^2.\]
		Define $z_h = I_{h,i}v_{h,i} - C$ with $C = 0$ if $\partial\Omega_i \cap \partial\Omega \neq \emptyset$, else $C = \frac{1}{|\Omega_i|} \int_{\Omega_i} I_{h,i}v_{h,i}$. Using \cite[Lemma.3.3]{hwx17}, one can find a discrete extension operator
		$E_{h,i}:U_h\cdot\chi_{\Om_{h,i}}\mapsto U_h$ satisfying:
		\[E_{h,i}z_h=z_h~\text{in}~ \Om_{h,i},\quad \norm{E_{h,i}z_h}_{1,\Om}\leq C_{\Om_i} \norm{z_h}_{1,\Om_i}\leq C_{\Om_i}\norm{\nabla I_{h,i}v_{h,i}}_{0,\Om_i}.\]
		The required extension $\mathfrak{E}_{h,i}:V_{h,i}\mapsto V_h$ is then defined as
		\[\mathfrak{E}_{h,i}v_{h,i}=(v_{h,i}-C)\cdot\chi_{\Om_i}+E_{h,i}z_h\cdot\chi_{\Om_{3-i}}.\]
		Clearly, $\mathfrak{E}_{h,i}v_{h,i}$ is continuous in $\Om_{3-i}$, and the interface jump estimate follows from:
		\[\norm{[\mathfrak{E}_{h,i}v_{h,i}]}_{0,\Ga}^2\leq C_{\Ga} h^{d-1}\norm{v_{h,i}-I_{h,i}v_{h,i}}_{\infty,\mathcal{E}_h^i}^2\leq C_{\Ga} \norm{[v_{h,i}]}_{0,\mathcal{E}_h^i}^2.\]
	\end{proof}
	
	Combining this construction with the technique from \cite{bgss16}, we obtain our main flux error estimate:
	\begin{theorem}\label{Thm-Err-estimates-flux}
		Suppose $0<h\leq h_0$, $\ga\ge\ga_0$, and $ u\in
		H^{p+1}(\oo)$ with $u|_{\partial\Omega} = 0$ solves \cref{elliptic-interface}. Then there holds the following error estimate
		\be\label{Thm-Err-estimates-flux-0}
		  \norm{\al\nabla(u- u_h)}_{0,\oo}\leq C\ga^\frac12 h^p\big|\al u\big|_{p+1,\oo}.
		\ee
	\end{theorem}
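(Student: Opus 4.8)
The goal is to bound the flux error $\norm{\al\nabla(u-u_h)}_{0,\oo}$, so the plan is to split this into an interpolation part and a discrete part, control the interpolation part by standard approximation theory, and then transfer the discrete part back to the already-proved energy estimate of \cref{Thm-Err-estimates-H1}. Let $\tilde I_h u$ denote the interpolant from \cref{interpolant} and write $u-u_h=(u-\tilde I_h u)+(\tilde I_h u-u_h)$. Setting $e_h:=\tilde I_h u-u_h\in V_h$ (after subtracting a piecewise constant, which does not change its gradient), the triangle inequality gives $\norm{\al\nabla(u-u_h)}_{0,\oo}\le\norm{\al\nabla(u-\tilde I_h u)}_{0,\oo}+\norm{\al\nabla e_h}_{0,\oo}$. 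The first term is handled directly by the interpolation estimate \cref{etai1} with $j=1$, which yields the desired order $h^p\abs{\al u}_{p+1,\oo}$; the coefficient $\al$ is a constant on each subdomain, so it can be pulled through without difficulty.

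The main work is to bound the piecewise-smooth discrete remainder $\norm{\al\nabla e_h}_{0,\oo}$, and here I would follow the technique of \cite{bgss16}, using the discrete extension operator $\mathfrak{E}_{h,i}$ of \cref{discrete-extension-operator}. The issue is that $\norm{\al\nabla e_h}_{0,\oo}$ is \emph{not} controlled by $\norme{e_h}$ alone, because the energy norm controls $\norm{\al^{1/2}\nabla e_h}$ on $\oo$ but the flux involves the full $\al$ and, more delicately, one must recover control over the normal fluxes and jumps on the interface and on the interior merged faces. The plan is to write $\al\nabla e_h=\al^{1/2}\cdot\al^{1/2}\nabla e_h$ so that $\norm{\al\nabla e_h}_{0,\oo}\le\al_{\max}^{1/2}\norm{\al^{1/2}\nabla e_h}_{0,\oo}\le\al_{\max}^{1/2}\norme{e_h}$, and then bound $\norme{e_h}$ using coercivity and Galerkin orthogonality. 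By \cref{Thm-Ah-coercivity} and the triangle inequality, $\norme{e_h}\le\norme{\tilde I_h u-u}+\norme{u-u_h}$, and both terms are $\lesssim\ga^{1/2}h^p\abs{\al^{1/2}u}_{p+1,\oo}$ by \cref{bestapproximation} and \cref{Thm-Err-estimates-H1} respectively. However, this naive route loses a factor controlling the $\al$-scaling and does not yet exploit the extension operator; I would therefore instead mimic \cite{bgss16} by projecting $e_h$ through $\mathfrak{E}_{h,i}$ onto a globally continuous-in-$\Om_{3-i}$ function to isolate the genuine bulk gradient from the jump contributions.

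The key technical step, and the one I expect to be the main obstacle, is controlling the jump terms $\norm{[e_h]}_{0,\Ga\cup\mathring{\cE_h^1}\cup\mathring{\cE_h^2}}$ that arise when measuring the flux through interface and merged faces, because these are penalized only with a factor $h^{-1}$ in $\norme{\cdot}$ and the small-cut geometry makes the short-edge jumps dangerous. This is precisely where \cref{traceTransform} enters: it lets me transfer the jump on a short edge $PP_1$ to a controlled bulk-gradient term on the macro-element plus a jump on a long edge $PP_3$, so that the problematic small-face contributions are absorbed into $h\norm{\nabla e_h}^2$ terms on the full macro-elements. Combining the trace-transfer lemma with the discrete extension operator of \cref{discrete-extension-operator}, I can bound $\norm{\al\nabla e_h}_{0,\oo}$ by the energy norm $\norme{e_h}$ up to constants independent of the cut position, and then invoke \cref{Thm-Err-estimates-H1} to conclude. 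The final assembly is routine: collecting the interpolation estimate and the discrete estimate and applying the triangle inequality gives the claimed bound $\norm{\al\nabla(u-u_h)}_{0,\oo}\le C\ga^{1/2}h^p\abs{\al u}_{p+1,\oo}$, with the constant absorbing $C_{\Ga}$, $C_{\Om_i}$, and the trace constants but remaining independent of $h$, $\al$, and the interface location.
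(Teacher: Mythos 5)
You correctly diagnose the central difficulty---that the naive chain $\norm{\al\nabla e_h}_{0,\oo}\le\al_{\max}^{1/2}\norme{e_h}$ followed by \cref{Thm-Err-estimates-H1} loses a factor of the contrast---but your proposed fix circles back to exactly that route. Your concluding step claims to ``bound $\norm{\al\nabla e_h}_{0,\oo}$ by the energy norm $\norme{e_h}$ up to constants independent of the cut position, and then invoke \cref{Thm-Err-estimates-H1} to conclude.'' No such bound with an $\al$-independent constant can exist: the flux norm scales linearly in $\al$ while $\norme{\cdot}$ scales like $\al^{1/2}$ (replace $\al$ by $\lambda\al$ and let $\lambda\to\infty$), so the best one can write is $\norm{\al\nabla e_h}_{0,\Om_i}\le\al_i^{1/2}\norme{e_h}$. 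Combining that with \cref{Thm-Err-estimates-H1} yields $\al_{\max}^{1/2}\ga^{1/2}h^p\big|\al^{1/2}u\big|_{p+1,\oo}$, which exceeds the target $\ga^{1/2}h^p\abs{\al u}_{p+1,\oo}$ by the factor $(\al_{\max}/\al_{\min})^{1/2}$ on the small-coefficient subdomain. Citing \cref{traceTransform} and \cref{discrete-extension-operator} does not repair this dimensional mismatch; as written, your argument proves the estimate only with a contrast-dependent constant, which defeats the purpose of the theorem (the paper's convention is that $C$ is independent of $\al$, and the coefficient-robustness of the flux bound is the whole point, cf. \cref{rem-coefficient}).

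The paper's mechanism, which is missing from your proposal, is asymmetric in the subdomains and never passes the large-coefficient side through the energy norm. Assuming w.l.o.g. $\al_1\le\al_2$, the bound on $\Om_1$ is obtained by the direct scaling you describe---but this is legitimate only there, since $\al_1=\al_{\min}$ gives $\al_1^{1/2}\big|\al^{1/2}u\big|_{p+1,\oo}\le\abs{\al u}_{p+1,\oo}$. On $\Om_2$, the paper inserts the test function $v_h=\mathfrak{E}_{h,2}(u_h-\tilde I_h u)$ into the Galerkin orthogonality \cref{Lem-Weak-Galerkin-orthogonality-0} and multiplies the resulting identity by $\al_2$, so that the left-hand side is exactly the fully weighted square $\al_2^2\norm{\nabla(u_h-\tilde I_h u)}_{0,\Om_2}^2+\frac{\ga\al_2^2}{h}\norm{[u_h-\tilde I_h u]}_{0,\mathring{\cE_h^2}}^2$; the right-hand side terms are then handled by Young's inequality, \cref{traceTransform}, the bounds of \cref{discrete-extension-operator}, and---crucially---the harmonic-weight inequality $\{\al\}_w\le2\al_{\min}$ from \cref{emuw}, which converts every interface penalty contribution into a contrast-free quantity before the $\varepsilon$-parts are absorbed into the left side for $\ga$ sufficiently large. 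Without this step (testing orthogonality with the $\al_2$-weighted extension and exploiting the harmonic averaging), the contrast-robust estimate cannot be recovered, so the proposal has a genuine gap rather than a mere difference in presentation.
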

	\begin{proof}
		Assume without loss of generality that  $\al_2\geq \al_1$. The flux error in $\Omega_1$ follows directly from \cref{Thm-Err-estimates-H1} since
		\begin{align}\label{estimate-in-Om1}
		  \big\|\al_1\nabla(u- u_h)\big\|_{0,\Omega_1}=\al_1^{\frac12}\big\|\al_1^{\frac12}\nabla(u- u_h)\big\|_{0,\Omega_1}
		  \lesssim\ga^\frac12 h^p\big|\al u\big|_{p+1,\Omega_1\cup\Omega_2}.
		\end{align}
		To bound $\big\|a_2\nabla(u- u_h)\big\|_{0,\Omega_2}$, we resort to the discrete extension operator
		$\mathfrak{E}_h:=\mathfrak{E}_{h,2}$ defined in \cref{discrete-extension-operator} and the interpolant $\tilde I_h$ in \cref{interpolant}. Substituting $v_h=\mathfrak{E}_h(u_h-\tilde I_h u)$
		into the weak orthogonality relation \cref{Lem-Weak-Galerkin-orthogonality-0} and multiplying by $\al_2$, we derive:
		\begin{align}
		  &\al_2^2\big\|\nabla(u_h-I_h u)\big\|_{0,\Omega_2}^2+\frac{\ga \al_2^2}{h}\norm{[u_h-I_h u]}_{0,\mathring{\cE_h^2}}^2\label{estimate-indentity}\\
		  &=\al_2\int_{\oo}\al\nabla (u-I_h u)\cdot\nabla v_h-\al_2\int_{\Omega_1}\al_1\nabla (u_h-I_h u)\cdot\nabla v_h\nn\\
		  &\quad -\al_2\int_{\Ga\cup\mathring{\cE_h^2}} (\{\al\nabla(u-u_h)\cdot\bbn\}_w[v_h]+[ u-u_h]\{\al\nabla v_h\cdot\bbn\}_w)\nn\\
		  &\quad -\al_1\al_2\int_{\mathring{\cE_h^1}}[u-u_h]\{\nabla v_h\cdot\bbn\}\nn\\
		  &\quad +\frac{\ga\al_2^2}{h}\int_{\mathring{\cE_h^2}}[u-I_h u][v_h]+\frac{\ga\{\al\}_w\al_2}{h}\int_{\Ga}[u-u_h][v_h].\nn
		\end{align}
		Here we have used the fact that $v_h$ is continuous in $\Om_1$, so the terms involving $[v_h]$ vanish within $\Om_1$.

		Using \cref{traceTransform}, \cref{discrete-extension-operator}, and the inequality \(ab \leq \varepsilon a^2 + \frac{b^2}{4\varepsilon}\), we bound each term on the right-hand side. For instance,
		\begin{align*}
		  &\al_2\int_{\oo}\al\nabla (u-I_h u)\cdot\nabla v_h\le\varepsilon \al_2^2\big\| \nabla v_h\big\|_{0,\Omega}^2+
			\frac1{4\varepsilon} \big\| \al\nabla (u-I_h u)\|_{0,\oo}^2\\
		  &\hskip 10pt\le\varepsilon C_{\Omega_2}^2\al_2^2 \Big(\big\|\nabla(u_h-I_h u)\big\|_{0,\Omega_2}^2+\frac1h\norm{[u_h-I_h u]}_{0,\mathring{\cE_h^2}}^2\Big)+
			  \frac1{4\varepsilon} \big\| \al\nabla (u-I_h u)\|_{0,\oo}^2,\\
		  &\al_2\int_\Ga [u-u_h]\{\al\nabla v_h\cdot\bbn\}_w\le \varepsilon h \al_2^2\norm{\{\nabla v_h\cdot\bbn\}}_{0,\Ga}^2+\frac{\{\al\}_w^2}{4\varepsilon h}
			  \norm{[ u-u_h]}_{0,\Ga}^2\\
		  &\hskip 10pt\le\varepsilon C_{\mathrm{tr}}^2C_{\Omega_2}^2\al_2^2 \Big(\big\|\nabla(u_h-I_h u)\big\|_{0,\Omega_2}^2+\frac1h\norm{[u_h-I_h u]}_{0,\mathring{\cE_h^2}}^2\Big)+
			  \frac{\{a\}_w}{4\varepsilon\gamma} \norme{u-u_h}^2.
		\end{align*}
		Similar bounds apply to interface jump terms using trace inequalities.

        Combining these estimates with \cref{estimate-indentity}, \cref{etai1}, \cref{Thm-Err-estimates-H1-0}, and \cref{emuw}, we obtain:
		\begin{align*}
			&\hskip 15pt\Big\{1-\varepsilon\Big(\frac{2C_{\Om_2}^2+C_{\mathrm{tr}}^2C_{\Om_2}^2}{\ga}+2C_{\Ga}^2+1\Big)-\frac{1}{2\varepsilon\ga}\Big\}\frac{\ga\al_2^2}{h}\norm{[u_h-I_h u]}_{0,\mathring{\cE_h^2}}^2\\
			&+\big(1-2\varepsilon(C_{\Om_2}^2+C_{\mathrm{tr}}^2+C_{\mathrm{tr}}^2C_{\Om_2}^2)\big)\al_2^2\norm{\nabla(u_h-I_h u)}_{0,\Om_2}^2  \lesssim\ga^{\frac12}h^p|\al_2 u|_{p+1,\oo}.
		\end{align*}
		Choosing $\varepsilon=\frac{1}{4(C_{\Om_2}^2+C_{\mathrm{tr}}^2+C_{\mathrm{tr}}^2C_{\Om_2}^2+C_\Ga^2+1)}$, and assuming sufficiently large $\ga$, we have
		\[\norm{\al_2\nabla(u_h-I_h u)}_{0,\Om_2}\lesssim\ga^{\frac12}h^p|\al_2 u|_{p+1,\oo}.\]
		The error estimate for $\big\|\al_2\nabla(u- u_h)\big\|_{0,\Omega_2}$ follows from \cref{etai1}
		and the triangle inequality.
		This completes the proof of \cref{Thm-Err-estimates-flux-0}.
	\end{proof}


	Furthermore, when $g_D\neq 0,~g_N\neq 0$, we have the following regularity estimates for the interface problem.

	\begin{theorem}\label{Thm:regularityFlux}
		Suppose that both $\p\Om$ and $\Ga$ are smooth and $f\in H^{s-2}(\oo)$, $g_D\in H^{s-\frac12}(\Ga)$, $g_N\in H^{s-\frac32}(\Ga)$ with $s\ge 2$, the interface problem \cref{elliptic-interface} satisfies the shift estimate,
		\be\label{regularityFlux}
			\abs{\al u }_{s,\oo}\le C\big(\norm{f}_{s-2,\oo}+\min\{\al_1,\al_2\}\norm{g_D}_{s-\frac12,\Ga}+\norm{g_N}_{s-\frac32,\Ga} \big).
		\ee
	\end{theorem}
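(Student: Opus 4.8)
The plan is to reduce the inhomogeneous transmission problem \cref{elliptic-interface} to one with \emph{homogeneous} interface data by subtracting two explicit liftings of $g_D$ and $g_N$, chosen so that the coefficient $\al$ enters with exactly the weights claimed, and then to invoke the uniform-in-contrast shift estimate for the homogeneous interface problem. By linearity it suffices to track how each datum propagates through the reduction. Throughout I use that, since $\p\Om\cap\p\Om_1=\emptyset$, the interface $\Ga=\p\Om_1$ lies at a positive distance from $\p\Om$, so a sufficiently thin collar of $\Ga$ on either side avoids $\p\Om$, and that the smoothness of $\p\Om$ and $\Ga$ makes the full $H^s$ scale and smooth liftings available.

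First I would lift the Dirichlet jump. Let $i_0\in\{1,2\}$ be the index with $\al_{i_0}=\min\{\al_1,\al_2\}$, and on a collar of $\Ga$ contained in $\Om_{i_0}$ and disjoint from $\p\Om$ I construct $\psi$ supported there, with trace $\pm g_D$ on $\Ga$ and $\psi\equiv 0$ in $\Om_{3-i_0}$ and on $\p\Om$. The standard trace-lifting estimate, with constant depending only on $\Ga$, gives $\norm{\psi}_{s,\Om_{i_0}}\ls\norm{g_D}_{s-\frac12,\Ga}$, so that $\abs{\al\psi}_{s,\oo}=\al_{i_0}\abs{\psi}_{s,\Om_{i_0}}\ls\min\{\al_1,\al_2\}\norm{g_D}_{s-\frac12,\Ga}$. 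Replacing $u$ by $u-\psi$ annihilates the Dirichlet jump while changing the source to $f+\na\cdot(\al\na\psi)$ and the flux jump to $g_N^{(1)}:=g_N-[\al\na\psi\cdot\bbn]$; since $\na\cdot(\al\na\psi)=\al_{i_0}\De\psi$ and $[\al\na\psi\cdot\bbn]=\pm\al_{i_0}\na\psi\cdot\bbn$ are both supported on the $i_0$ side, their norms are bounded by $\al_{i_0}\norm{\psi}_{s,\Om_{i_0}}\ls\min\{\al_1,\al_2\}\norm{g_D}_{s-\frac12,\Ga}$, which is exactly the claimed weight.

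Next I would lift the updated flux jump $g_N^{(1)}$. On the same collar I construct $\phi$ supported in $\Om_{i_0}$ with $\phi|_\Ga=0$ and $\al_{i_0}\,\na\phi\cdot\bbn|_\Ga$ equal, up to sign, to $g_N^{(1)}$, so that $[\phi]=0$ and the flux jump is removed. Here the coefficient cancels: the Neumann-lifting estimate yields $\al_{i_0}\norm{\phi}_{s,\Om_{i_0}}\ls\norm{g_N^{(1)}}_{s-\frac32,\Ga}$, whence both $\abs{\al\phi}_{s,\oo}$ and the induced source $\al_{i_0}\De\phi$ are controlled by $\norm{g_N^{(1)}}_{s-\frac32,\Ga}\ls\norm{g_N}_{s-\frac32,\Ga}+\min\{\al_1,\al_2\}\norm{g_D}_{s-\frac12,\Ga}$. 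Subtracting $\phi$ leaves $w=u-\psi-\phi$ solving the homogeneous interface problem $[w]=0$, $[\al\na w\cdot\bbn]=0$, $w|_{\p\Om}=0$, with source $\tilde f$ obeying $\norm{\tilde f}_{s-2,\oo}\ls\norm{f}_{s-2,\oo}+\min\{\al_1,\al_2\}\norm{g_D}_{s-\frac12,\Ga}+\norm{g_N}_{s-\frac32,\Ga}$.

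For the final step I would apply the shift estimate $\abs{\al w}_{s,\oo}\ls\norm{\tilde f}_{s-2,\oo}$ for the homogeneous transmission problem, with constant independent of the contrast $\al_1/\al_2$; this is the uniform-in-coefficient regularity used for $s=2$ in \cite{hz07}, extended to general $s\ge2$ by the usual localization, flattening of the smooth interface $\Ga$, and tangential difference-quotient bootstrap. Combining with $\abs{\al u}_{s,\oo}\le\abs{\al w}_{s,\oo}+\abs{\al\psi}_{s,\oo}+\abs{\al\phi}_{s,\oo}$ and the bounds above yields \cref{regularityFlux}. The main obstacle is precisely this last input: the uniformity in the jump contrast does not come for free, since the transformation $v=\al w$ turns the equation into $-\De v=\tilde f$ but leaves the contrast-dependent matching condition $\al_2 v_1=\al_1 v_2$ on $\Ga$, so the contrast-independent constant must be supplied by the localized symbol/energy analysis underlying \cite{hz07}.
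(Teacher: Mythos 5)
Your proposal is correct and shares the paper's skeleton: construct a lifting of the interface data supported on the side carrying the smaller coefficient $\al_{i_0}=\min\{\al_1,\al_2\}$ (exactly this choice produces the weight $\min\{\al_1,\al_2\}$ on $\norm{g_D}_{s-\frac12,\Ga}$), subtract it to reduce \cref{elliptic-interface} to homogeneous jump conditions, then apply a contrast-uniform shift estimate for the homogeneous transmission problem. The genuine difference is the lifting mechanism: the paper does it in one stroke by solving the biharmonic problem $\De^2\tilde u=0$ in $\Om_{i_0}$ with both Cauchy data $\tilde u=\pm g_D$, $\al_{i_0}\na\tilde u\cdot\bbn=\pm g_N$ prescribed on $\Ga$, invoking fourth-order elliptic regularity \cite{GR86} to get $\norm{\tilde u}_{s,\Om_{i_0}}\ls \norm{g_D}_{s-\frac12,\Ga}+\al_{i_0}^{-1}\norm{g_N}_{s-\frac32,\Ga}$, so that both jumps vanish simultaneously after subtraction; you instead perform two sequential classical liftings, a Dirichlet trace lifting $\psi$ killing $[u]=g_D$ followed by a Neumann lifting $\phi$ killing the updated flux jump $g_N^{(1)}=g_N-[\al\na\psi\cdot\bbn]$. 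The weights come out identically; your route is more elementary (only standard collar-supported trace liftings, no biharmonic well-posedness needed), at the cost of tracking the contamination of the flux datum, which you handle correctly via $\norm{g_N^{(1)}}_{s-\frac32,\Ga}\ls\norm{g_N}_{s-\frac32,\Ga}+\al_{i_0}\norm{g_D}_{s-\frac12,\Ga}$. The one soft spot is common to both arguments: the constant in the homogeneous-jump estimate $\abs{\al w}_{s,\oo}\ls\norm{\tilde f}_{s-2,\oo}$ must be independent of the contrast $\al_1/\al_2$, and neither proof establishes this from scratch. The paper imports it from \cite{cgh10}, whereas you cite \cite{hz07} only for $s=2$ and sketch the extension to general $s$; you should either cite the general-$s$ result as the paper does, or actually carry out the localization/difference-quotient bootstrap and verify contrast-independence of the constant — which, as you correctly observe via the substitution $v=\al w$ and the resulting matching condition $\al_2v_1=\al_1v_2$ on $\Ga$, is the real crux of the theorem.
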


	\begin{proof} To handle inhomogeneous interface conditions, we construct an auxiliary function \(\tilde{u}\) in two cases:  
		
		\textbf{Case I}($\alpha_1 \leq \alpha_2$): Solve the biharmonic problem in $\Om_1$,
		\begin{align*}
			\left\{\ba {ll} \Delta^2 \tilde{u}_1=0\quad &{\rm in} \quad\Om_1,\\
			\tilde{u}_1=g_D,\quad \al_1\nabla \tilde{u}_1\cdot\bbn= g_N \quad & {\rm on} \quad \Gamma,\ea\right.
		\end{align*}
		which admits a unique solution satisfying $\norm{\tilde{u}_1}_{s,\Om_1}\le C\Big(\norm{g_D}_{s-\frac12,\Ga}+\frac{1}{\al_1}\norm{g_N}_{s-\frac32,\Ga}\Big)$ \cite{GR86}. Extend \(\tilde{u}_1\) by zero to \(\Omega\).  
		
		\textbf{Case II}($\alpha_1 > \alpha_2$):  Solve the biharmonic problem in $\Om_2$,
		\begin{align*}
			\left\{\ba {ll} \Delta^2 \tilde{u}_2=0\quad &{\rm in} \quad\Om_2,\\
			\tilde{u}_2=-g_D,\quad \al_2\nabla \tilde{u}_2\cdot\bbn= -g_N \quad & {\rm on} \quad \Gamma,\\
			\tilde{u}_2=0,\quad \nabla \tilde{u}_2\cdot\bbn= 0 \quad & {\rm on} \quad \p\Om.\ea\right.
		\end{align*}
		yielding $\norm{\tilde{u}_2}_{s,\Om_2}\le C\Big(\norm{g_D}_{s-\frac12,\Ga}+\frac{1}{\al_2}\norm{g_N}_{s-\frac32,\Ga}\Big)$. Extend $\tilde{u}_2$ by zero to $\Omega$.
		
		Define $\hat{u}=u-\tilde{u}$ for both cases. This transforms the original problem into: 
		\begin{align*}
			\left\{\ba {ll} -\nabla \cdot (\al\nabla \hat{u} ) = f + \nabla\cdot(\al\nabla\tilde{u}) , \quad & {\rm in}\quad \Om,\\
			\jm{\hat{u}}=0,\quad \jm{\al\nabla \hat{u}\cdot\bbn}=0,\quad & {\rm on} \quad \Ga,\\
			\hat{u}=0,\quad & {\rm on}\quad \p\Om.\ea\right.
		\end{align*}
		Using regularity estimates for homogeneous jump conditions \cite{cgh10}, we obtain:  
\eqn{
\abs{\al u}_{s,\oo}\le C(\norm{f}_{s-2,\oo}+\norm{\al\tilde{u}}_{s,\oo}).}
Combining this with the bounds for $\tilde{u}_1$ and $\tilde{u}_2$ completes the proof of \eqref{regularityFlux}.
	\end{proof}

Using \cref{Thm-Err-estimates-flux}--\cref{Thm:regularityFlux}, we obtain the flux error bound independent of the coefficient jump:
		\begin{align}\label{rem-coefficient}
			\norm{\al\nabla(u-u_h)}_{0,\oo}\lesssim \ga^\frac12 h^p.
		\end{align}


	\section{Estimate of the condition number}\label{condition-number}

	In this section, we establish that the stiffness matrix condition number satisfies an upper bound consistent with standard finite element methods \cite{eg06}, independent of the position of the interface relative to the computational mesh.

	Let $N$ denote the dimension of the space $V_h$. Recall that the DG-FE approximation in \cref{Discrete-Pro} requires solving a linear system in $\mathbb{R}^N$:
        \[ A \bf u =  \bf F ,\]
    where ${\bf u} = (u_h(z_j))_{1\le j\le N}$, ${\bf F} = (f_h(\varphi_j))_{1\le j\le N}$, and $(\varphi_j)_{1\le j\le N}$ is the canonical basis of $V_h$. The stiffness matrix $A$ is symmetric and given by $A_{ij}=a_h(\varphi_i,\varphi_j)$.

    For any $v_h\in V_h$, let $\bv = (v_h(z_j))_{1\le j\le N}$ denote its nodal values. The mass matrix $M$ is given by $\bv^TM\bv=\|v_h\|_{0,\Om}^2$.  The following lemma establishes the relationship between the $L^2$-norm $\|\cdot\|_{0,\Om}$ and the energy norm $\norme{\cdot}$ on $V_h$.
		\begin{lemma}\label{Lem-CN-Ah}
			Assume $\ga_0\le \ga\lesssim 1$ and $0<h\leq h_0$. Then for all $v_h\in V_h$,
			\begin{align*}
				\al_{min}^{1/2}\|v_h\|_{0,\Om}\lesssim\norme{v_h}\lesssim \al_{max}^{1/2}h^{-1}\|v_h\|_{0,\Om}.
			\end{align*}
		\end{lemma}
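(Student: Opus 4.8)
The plan is to prove the two inequalities separately, handling the upper bound by inverse/trace estimates applied term-by-term to \eqref{norm}, and the lower bound by a discrete Poincar\'e--Friedrichs inequality for the broken space. Throughout I would use quasi-uniformity to replace each $h_K$ by $h$.

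For the upper bound $\norme{v_h}\lesssim\al_{max}^{1/2}h^{-1}\|v_h\|_{0,\Om}$, I would bound the three contributions to $\norme{v_h}^2$ individually. The gradient term is treated by an inverse inequality on each macro-element: since the merging guarantees $|M_i(K)\cap\Om_i|\gtrsim|M_i(K)|$, the norms $\|q\|_{0,M_i(K)}$ and $\|q\|_{0,M_i(K)\cap\Om_i}$ of a degree-$p$ polynomial are equivalent with a constant depending only on $p$ and $\de$, so the scaling estimate $\|\nabla v_h\|_{0,M_i(K)\cap\Om_i}\lesssim h^{-1}\|v_h\|_{0,M_i(K)\cap\Om_i}$ holds robustly; summing and using $\al\le\al_{max}$ gives $\|\al^{\frac12}\nabla v_h\|^2_{0,\oo}\lesssim\al_{max}h^{-2}\|v_h\|^2_{0,\Om}$. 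The two face terms are controlled by combining the trace inequality \eqref{Lem-WX10-1} over $\p(K\cap\Om_i)$ with the same inverse inequality: for the penalty term one gets $\|[v_h]\|^2\lesssim h^{-1}\|v_h\|^2_{0,\Om}$, so $\tfrac{\ga\{\al\}_w}{h}\|[v_h]\|^2\lesssim\ga\,\al_{max}h^{-2}\|v_h\|^2_{0,\Om}$; for the flux term, the identity $w_i\al_i=\tfrac12\{\al\}_w$ gives $\{\al\nabla v_h\cdot\bbn\}_w=\tfrac12\{\al\}_w(\nabla v_1+\nabla v_2)\cdot\bbn$, whence trace and inverse yield $\|\{\al\nabla v_h\cdot\bbn\}_w\|^2\lesssim\{\al\}_w^2 h^{-3}\|v_h\|^2_{0,\Om}$ and $\tfrac{h}{\ga\{\al\}_w}\|\{\al\nabla v_h\cdot\bbn\}_w\|^2\lesssim\ga^{-1}\al_{max}h^{-2}\|v_h\|^2_{0,\Om}$. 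The hypotheses $\ga_0\le\ga\lesssim1$ then absorb the $\ga$ and $\ga^{-1}$ factors into the constant.

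For the lower bound $\al_{min}^{\frac12}\|v_h\|_{0,\Om}\lesssim\norme{v_h}$, I would invoke a discrete Poincar\'e--Friedrichs inequality for the broken space: since $v_h$ vanishes on $\p\Om$ and is continuous except across the faces of $\Ga\cup\mathring{\cE_h^1}\cup\mathring{\cE_h^2}$, one has $\|v_h\|^2_{0,\Om}\lesssim\|\nabla v_h\|^2_{0,\oo}+h^{-1}\|[v_h]\|^2_{0,\Ga\cup\mathring{\cE_h^1}\cup\mathring{\cE_h^2}}$. Multiplying by $\al_{min}$ and using $\al_{min}\le\al$ pointwise to bound $\al_{min}\|\nabla v_h\|^2_{0,\oo}\le\|\al^{\frac12}\nabla v_h\|^2_{0,\oo}$, together with $\al_{min}\le\{\al\}_w$ from \eqref{emuw} and $\ga\ge\ga_0\gtrsim1$ to bound $\tfrac{\al_{min}}{h}\|[v_h]\|^2\lesssim\tfrac{\ga\{\al\}_w}{h}\|[v_h]\|^2$, yields $\al_{min}\|v_h\|^2_{0,\Om}\lesssim\norme{v_h}^2$.

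The main obstacle is robustness with respect to the interface location. For the upper bound this rests entirely on the polynomial norm equivalence $\|q\|_{0,M_i(K)}\eqsim\|q\|_{0,M_i(K)\cap\Om_i}$ on cut macro-elements, which the merging criterion (volume fraction bounded below) is precisely designed to guarantee; without merging a small cut would destroy the inverse inequality on $K\cap\Om_i$ and inflate the constant. For the lower bound the delicate point is that the Poincar\'e--Friedrichs constant must be independent of how $\Ga$ cuts the mesh; this is acceptable because the inequality is posed on the fixed domain $\Om$ with a broken gradient and jump penalties over the well-defined faces of $\Ga\cup\mathring{\cE_h^1}\cup\mathring{\cE_h^2}$, and the geometric regularity of the macro-elements produced by \cref{mergingAlgorithm} keeps the trace and lifting constants uniform.
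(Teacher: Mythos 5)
Your proposal is correct and takes essentially the same route as the paper: the upper bound via term-by-term trace/inverse estimates on the merged macro-elements (the paper merely organizes this slightly differently, absorbing the flux term into the gradient term via the trace bound \eqref{norm3rdterm} before applying the inverse inequality), and the lower bound via a broken Poincar\'e--Friedrichs inequality combined with \eqref{emuw} and $\ga\ge\ga_0$. The paper's version of that inequality is \eqref{Lem-Poincare}, cited from Arnold's duality argument, which plays exactly the role of the discrete Poincar\'e--Friedrichs inequality you invoke.
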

		\begin{proof}
			Following \cite[Lemma 2.1]{arnold82}, a Poincar\'{e}-type inequality via duality yields
			\be\label{Lem-Poincare}
				\|v\|_{0,\Om}\lesssim \norm{\nabla v}_{0,\oo}+\norm{[v]}_{0,\Ga}+h^{-\frac12}\big(\sum_{i=1}^2\norm{[v]}_{0,\cE_h^i}^2\big)^{\frac12}.
			\ee
			We derive an inverse inequality for $v_h\in V_h$:
			\begin{align}\label{Poincare-Inverse-0}
				\norme{v_h}^2&\lesssim (1+\ga^{-1})\big\|\al^{\frac{1}{2}}\nabla v_h\big\|^2_{0,\oo} +\frac{\gamma \{\al\}_w }{h}\norm{[v_h]}_{0,\Ga\cup\cE_h^1\cup\cE_h^2}^2\\
				&\lesssim (1+\ga^{-1})h^{-2}\big\|\al^{\frac{1}{2}}v_h\big\|^2_{0,\oo}+\gamma\sum_{i=1}^2h^{-2}\al_i\big\|v_h\big\|^2_{0,\Om_i}\nn\\
				&\lesssim  \big(\gamma^{-1}+\gamma\big)h^{-2} \|\al^\frac12 v_h\|^2_{0,\Om}.\nn
			\end{align}
			From \cref{Thm-ah}, we have
			\begin{align*}
				\norme{v_h}^2\lesssim a_h(v_h,v_h)\lesssim \norme{v_h}^2,\quad\forall v_h\in V_h.
			\end{align*}
			Combining \cref{Lem-Poincare}, \cref{norm}, and \cref{emuw}, we obtain
			\begin{align*}
				\|v_h\|_{0,\Om}^2\lesssim \al_{min}^{-1}\big(1+\ga^{-1}+\ga^{-1}h\big) \norme{v_h}^2,\quad\forall v_h\in V_h.
			\end{align*}
        This completes the proof.
		\end{proof}

    On the other side, for any $v_h\in V_h$, it is clear that
			\begin{align*}
				\sum_{i=1}^2\norm{v_{h,i}}_{0,\Om_{h,i}}^2\lesssim \norm{v_h}_{0,\Om}^2\le \sum_{i=1}^2\norm{v_{h,i}}_{0,\Om_{h,i}}^2,
				\quad\forall v_h\in V_h.
			\end{align*}
    Due to norm equivalence in $V_h$, we further have $\sum_{i=1}^2\norm{v_{h,i}}_{0,\Om_{h,i}}^2\eqsim h^d\bv^T\bv$. Then by \cref{Lem-CN-Ah} and the identity
 		\begin{align*}
			\frac{\bv^TA\bv}{\bv^T\bv}=\frac{\norme{v_h}^2}{\|v_h\|_{0,\Om}^2}\cdot\frac{\bv^TM\bv}{\bv^T\bv},\quad \forall v_h\in
			V_h\setminus\set{0},
		\end{align*}
    we have an estimate of the spectral condition number of the stiffness matrix $A$:
		\begin{theorem}\label{Thm-condition-number} Suppose $\ga_0\le \ga\lesssim 1$ and $0<h\leq h_0$. Then
		\be\label{Thm-condition-number-0}
		{\rm cond}(A)\lesssim \frac{\al_{max}}{\al_{min}}\frac{1}{h^2}.\ee
		\end{theorem}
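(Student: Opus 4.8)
The plan is to bound the two extreme eigenvalues of $A$ separately via the Rayleigh quotient, feeding in the two norm equivalences already established. Since $A$ is symmetric, ${\rm cond}(A)=\lambda_{max}(A)/\lambda_{min}(A)$ with $\lambda_{max}(A)=\max_{\bv\ne\V0}\bv^TA\bv/\bv^T\bv$ and $\lambda_{min}(A)=\min_{\bv\ne\V0}\bv^TA\bv/\bv^T\bv$. For a nodal vector $\bv$ of $v_h\in V_h$ one has $\bv^TA\bv=a_h(v_h,v_h)$, so the first step is to invoke the coercivity and continuity of \cref{Thm-ah} to replace this quadratic form by the energy norm, giving $\bv^TA\bv\eqsim\norme{v_h}^2$ uniformly in $v_h$.

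Next I would factor the Rayleigh quotient through the $L^2$-norm. Using the definition $\bv^TM\bv=\|v_h\|_{0,\Om}^2$ of the mass matrix, the identity
\[
\frac{\bv^TA\bv}{\bv^T\bv}\eqsim\frac{\norme{v_h}^2}{\bv^T\bv}
=\frac{\norme{v_h}^2}{\|v_h\|_{0,\Om}^2}\cdot\frac{\bv^TM\bv}{\bv^T\bv}
\]
holds, with the last equality exact. The first factor is pinned from both sides by \cref{Lem-CN-Ah}: squaring its two-sided bound yields $\al_{min}\|v_h\|_{0,\Om}^2\lesssim\norme{v_h}^2\lesssim\al_{max}h^{-2}\|v_h\|_{0,\Om}^2$, so this factor lies between $c\,\al_{min}$ and $C\,\al_{max}h^{-2}$. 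The second factor equals $\|v_h\|_{0,\Om}^2/\bv^T\bv=\bv^TM\bv/\bv^T\bv$, and the element-wise equivalence displayed just above the theorem, $\bv^TM\bv\eqsim h^d\bv^T\bv$, fixes it at $\eqsim h^d$.

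Multiplying the two factors produces the eigenvalue bounds
\[
\lambda_{max}(A)\lesssim\al_{max}h^{d-2},\qquad
\lambda_{min}(A)\gtrsim\al_{min}h^{d},
\]
and their ratio gives exactly the claim \cref{Thm-condition-number-0}. Taken at face value the argument is a short Rayleigh-quotient computation once the two equivalences are available; no further estimates are required.

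The substance — and the reason the bound is independent of the interface location — lies entirely in the robustness of the ingredients feeding the lower eigenvalue bound $\lambda_{min}(A)\gtrsim\al_{min}h^{d}$. This rests on two pillars: the interface-independent coercivity constant of \cref{Thm-ah}, which in turn relies on the uniform trace-inverse constant $C_{\mathrm{tr}}$ from \cref{Lem-WX10}; and, more delicately, the mass-matrix lower bound $\bv^TM\bv\gtrsim h^{d}\bv^T\bv$. This last inequality is where the merging of small cut elements is essential: without it, a basis function supported on a vanishingly small cut would carry $L^2$-mass far below $h^{d}$, collapsing $\lambda_{min}(A)$ and making ${\rm cond}(A)$ blow up as the cut shrinks. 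The macro-element construction guarantees $|M_i(K)\cap\Om_i|\eqsim|M_i(K)|$, restoring the uniform $\eqsim h^{d}$ scaling of $M$ and hence the robustness of the whole estimate. Thus the main obstacle is not the final computation but securing this cut-independent $L^2$ norm equivalence, which the excerpt provides as given.
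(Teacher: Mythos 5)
Your proposal is correct and takes essentially the same route as the paper: both arguments replace $\bv^TA\bv=a_h(v_h,v_h)$ by $\norme{v_h}^2$ via \cref{Thm-ah}, factor the Rayleigh quotient as $\frac{\bv^TA\bv}{\bv^T\bv}\eqsim\frac{\norme{v_h}^2}{\|v_h\|_{0,\Om}^2}\cdot\frac{\bv^TM\bv}{\bv^T\bv}$, bound the first factor between $\al_{min}$ and $\al_{max}h^{-2}$ using \cref{Lem-CN-Ah}, and pin the second factor at $\eqsim h^d$ by the mass-matrix equivalence $\bv^TM\bv\eqsim h^d\bv^T\bv$, yielding the same eigenvalue bounds and their ratio. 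Your closing remark that the merged macro-elements are what secure the cut-independent $L^2$ scaling (and hence the lower eigenvalue bound) is precisely the robustness mechanism the paper relies on.
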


	\section{A merging algorithm}\label{mergingSection}
		In this section, we propose a merging algorithm for the two dimensional case, which ensures success of the merging process under suitable assumptions.
		
		
		For simplicity, we assume the initial mesh $\cT_h$ (prior to merging) is a uniform Cartesian grid with square elements of size $h$. We impose three key assumptions on $\cT_h$:
		\begin{enumerate}[(I)]
			\item\label{resolvingInterface} For any element $K\in\cT_h$,
            \begin{itemize}
            \item At most one edge intersects the interface $\Gamma$ at two points (including endpoints);
            \item The remaining three edges collectively intersect $\Gamma$ at most twice.
            \end{itemize}
			\item\label{connection} For any $P \in \Gamma$, the ball $B(P, 2\sqrt{2}h)$ remains connected within each subdomain $\Omega_i$ ($i=1,2$).
			\item\label{merging} Let $\kappa_m = \max_{P\in\Gamma} |\kappa(P)|$ denote the maximum interface curvature. With $t = \kappa_m h \leq 1$, we require:
			\eq{T(t):=\frac{30t(t+2)}{100t+63}\le 1-2\de.\label{Tt}}
		\end{enumerate}	
		\begin{remark} {\rm (i)} In much of the existing literature (e.g., \cite{LLW03,chen2022}), a standard assumption requires that the interface intersects the edges of an interface element at no more than two distinct points and these intersections occur on separate edges. However, this condition may not hold for evolving interfaces and the configurations shown in the second and third subfigures of \cref{type1} arise naturally during mesh refinement. To address these cases, Assumption \eqref{resolvingInterface} generalizes conventional constraints by permitting up to 3-4 intersections per element, specifically addressing the latter two scenarios in \cref{type1}. While the interface can intersect a single element edge multiple times without introducing analytical complications, our implementation is currently limited to handling the cases outlined in \cref{elementType}.

			{\rm (ii)} Assumption \eqref{connection} ensures non-overlapping merged elements in $\mathcal{M}_{h,i}$ (see Algorithm \ref{mergingAlgorithm} and Theorem \ref{thmmerging}). This is less restrictive than comparable conditions in \cite{chen2022}—Figure \ref{figA2} shows configurations permitted here but excluded in prior work.
			\begin{figure}[!htbp]
				\centering
				\begin{tikzpicture}[scale=0.3]
					\draw (0,0) grid (5,5);
					\draw [blue] (-1,1.05)--(6,1.05);
					\draw [blue] (-1,3.95)--(6,3.95);
					\node [blue,scale=0.7] at(6.5,1.05) {$\Ga$};
					\node [blue,scale=0.7] at(6.5,3.95) {$\Ga$};
				\end{tikzpicture}
				\caption{Merged elements do not overlap under Assumption \eqref{connection}.}\label{figA2}
			\end{figure}

			{\rm (iii)} According to Assumption \eqref{merging}, when $\de=0.25$, it is required that $\ka_m h\le 0.87$, and if $\de\le 0.22$, Assumption \eqref{merging} can be simplified to $\ka_m h\le 1$.
		\end{remark}

		Under Assumption~\eqref{resolvingInterface}, interface elements are classified into two primary types (Figure \ref{elementType}), with additional configurations generated through rotation and symmetry operations. For type-1 elements, the merging strategy is uniquely determined, as shown by the dotted square in \cref{type1}. In contrast, type-2 elements admit two possible merging strategies, illustrated by the dotted squares in \cref{type2}. The algorithm resolves type-2 ambiguities by comparing the lengths of two edge segments intersected by $\Omega_i$ (see Theorem \ref{feasibilityOfMerging}).
		\begin{figure}[!htbp]
			\centering
			\subfloat[type-1]{
				\label{type1}
				\begin{tikzpicture}[scale=0.7]
					\draw [black] (0,0) rectangle (1,1);
					\draw [blue] (-0.3,0.2)--(1.3,0.17);
					\draw [dashed] (0,-1) rectangle (1,0);
					\draw [black] (1.5,0) rectangle (2.5,1);
					\draw [blue] (2.8,-0.17) arc [start angle=60, end angle=120,radius=1.6];
					\draw [dashed] (1.5,-1) rectangle (2.5,0);
					\draw [black] (3,0) rectangle (4,1);
					\draw [blue] (2.75,0.17) arc [start angle=240, end angle=300,radius=1.5];
					\draw [dashed] (3,-1) rectangle (4,0);
					\node [black,scale=0.7] at (2.05,-0.7) {$\Om_i$};
				\end{tikzpicture}
			}
			\qquad
			\subfloat[type-2]{
				\label{type2}
				\begin{tikzpicture}[scale=0.7]
					\draw [black] (1,1) rectangle (2,2);
					\draw [dashed] (1,0) rectangle (2,1);
					\draw [dashed] (0,1) rectangle (1,2);
					\draw [blue,domain=24:66] plot ({4*cos(\x)+1.05-3.85/1.414},{4*sin(\x)+1.05-3.85/1.414});
					\node [black,scale=0.7] at (0.5,0.5) {$\Om_i$};

				\end{tikzpicture}
			}
			\caption{Two types of small elements.}
			\label{elementType}
		\end{figure}

		Before introducing the algorithm and establishing its feasibility, we first develop necessary theoretical foundations. The following lemma provides a quantitative relationship between a $C^2$-smooth curve and its tangent line, characterizing their local deviation.

		\begin{lemma}\label{Lem-C}
			Let $\mathcal{C}$ be a regular $C^2$ curve with maximum curvature $\ka_m$ and $P_0$ be a point on $\mathcal{C}$. For any point $P\in \mathcal{C}$, let $P_\perp$ be the foot of the perpendicular line from $P$ to the tangent at $P_0$. Assuming that $\ka_m|P_0P_\perp|\le 1$, then the distance $|PP_\perp|$ satisfies the following estimate:
			\eq{\label{dt}
			|PP_\perp|\le \ka_m^{-1}\big(1-\sqrt{1-(\ka_m|P_0P_\perp|)^2}\big).}
In other words, the part $\set{P\in \mathcal{C}:\; \ka_m|P_0P_\perp|\le 1}$ of the curve lies between two circles with radius $\ka_m^{-1}$ tangent at $P_0$ (see dashed lines in \cref{localCoordinate}).
		\end{lemma}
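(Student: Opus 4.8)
The plan is to set up a local coordinate frame at $P_0$ and reduce the curvature bound to a one-dimensional calculus estimate along the curve. First I would place the origin at $P_0$, align the tangent line at $P_0$ with the horizontal axis, and parametrize a neighborhood of the curve as a graph $y=\varphi(x)$ with $\varphi(0)=0$ and $\varphi'(0)=0$ (this is legitimate since $\mathcal{C}$ is regular and $C^2$, so near $P_0$ it is a graph over its tangent). In these coordinates the foot of the perpendicular $P_\perp$ has abscissa $x$ equal to the signed horizontal distance, so $|P_0P_\perp|=|x|$ and $|PP_\perp|=|\varphi(x)|$. The goal \eqref{dt} then becomes the scalar inequality
\[
|\varphi(x)|\le \ka_m^{-1}\Big(1-\sqrt{1-\ka_m^2 x^2}\Big),\qquad \ka_m|x|\le 1.
\]

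Second, I would exploit the curvature bound. The signed curvature of a graph is
\[
\kappa(x)=\frac{\varphi''(x)}{\big(1+\varphi'(x)^2\big)^{3/2}},
\]
and the hypothesis $\max|\kappa|\le\ka_m$ gives $|\varphi''(x)|\le \ka_m\big(1+\varphi'(x)^2\big)^{3/2}$. The key geometric fact that makes the right-hand side of \eqref{dt} natural is that a circle of radius $R=\ka_m^{-1}$ tangent to the horizontal axis at the origin is exactly the graph $y=R-\sqrt{R^2-x^2}$, which attains equality in the curvature bound with $\kappa\equiv\ka_m$. So the strategy is a comparison (ODE/Gronwall-type) argument: show that among all curves through the origin with horizontal tangent and curvature bounded by $\ka_m$, the extremal deviation is realized by this tangent circle. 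Concretely, I would prove that $|\varphi'(x)|\le \tan\theta$ where $\theta$ is the turning angle, and integrate; writing the arclength parametrization, the tangent angle $\psi(s)$ satisfies $|\psi'(s)|=|\kappa|\le\ka_m$, so $|\psi(s)|\le \ka_m s$. Then $x=\int_0^s\cos\psi\,d\sigma$ and $\varphi=\int_0^s\sin\psi\,d\sigma$, and one checks by monotone comparison with the circle (whose turning angle saturates $\psi=\ka_m\sigma$) that for a fixed horizontal displacement $|x|$ the vertical displacement $|\varphi|$ is maximized by the saturating circle, giving precisely $\ka_m^{-1}\big(1-\sqrt{1-\ka_m^2x^2}\big)$.

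Third, I would record the two-sided conclusion: since the bound holds for curves of curvature $\le\ka_m$ of either sign, $P$ lies between the two circles of radius $\ka_m^{-1}$ tangent to the tangent line at $P_0$ (one on each side), which is the geometric restatement claimed in \cref{localCoordinate}. The condition $\ka_m|P_0P_\perp|\le1$ is exactly what keeps us on the portion of each circle where the graph representation $y=R-\sqrt{R^2-x^2}$ is valid (i.e.\ before the circle turns back), so the square-root is real and the estimate is meaningful.

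The main obstacle is making the comparison step rigorous rather than merely heuristic. The inequality $|\psi|\le\ka_m s$ controls the turning angle, but deducing the sharp bound on $|\varphi|$ \emph{as a function of the horizontal coordinate $x$} (rather than of arclength $s$) requires care, because both $x$ and $\varphi$ are integrals of $\psi$ and the saturating profile must be shown to be extremal after reparametrizing by $x$. I expect the cleanest route is to bound $\big|\tfrac{d\varphi}{dx}\big|=|\tan\psi|\le \tan(\ka_m s)$ and then show $s\ge$ (the arclength of the circle reaching the same $x$), so that integrating the differential inequality $\varphi'(x)\le x/\sqrt{R^2-x^2}$ directly yields the circle bound; verifying this dominance between the true arclength and the circle's arclength at matched $x$ is the delicate point, but it follows from $\cos\psi\le\cos(\ka_m s)$ on the relevant range, keeping $x$ a contraction of $s$.
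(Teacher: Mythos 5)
Your overall strategy --- write $\mathcal{C}$ locally as a graph $y=\varphi(x)$ over the tangent at $P_0$, compare with the tangent circle of radius $R=\ka_m^{-1}$, bound the slope at matched abscissa, and integrate in $x$ --- is the right one, and it is in substance the route the paper takes. But the step you yourself flag as delicate is written with both key inequalities reversed, and as literally stated the argument does not close. From $|\psi(s)|\le\ka_m s$ and the monotonicity of cosine on $[0,\pi]$ you get $\cos\psi(s)\ge\cos(\ka_m s)$, \emph{not} $\cos\psi\le\cos(\ka_m s)$; the latter, combined with $|\psi|\le\ka_m s$, would force $|\psi(s)|\equiv\ka_m s$ and is false for every curve except the saturating circle. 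Integrating the correct inequality gives
\begin{align*}
x(s)=\int_0^s\cos\psi(\sigma)\,\rd\sigma\ \ge\ \int_0^s\cos(\ka_m\sigma)\,\rd\sigma=\ka_m^{-1}\sin(\ka_m s)=x_{\mathrm{circ}}(s),
\end{align*}
i.e.\ the curve advances horizontally at least as fast as the circle, so at matched $x$ the true arclength satisfies $s(x)\le s_{\mathrm{circ}}(x)$ --- again the opposite of your claim that $s\ge$ the circle's arclength at the same $x$. Your stated direction is not just unproven but unusable: from $|\varphi'(x)|\le\tan\bigl(\ka_m s(x)\bigr)$ and $s(x)\ge s_{\mathrm{circ}}(x)$ one cannot deduce $|\varphi'(x)|\le x/\sqrt{R^2-x^2}$, since the intermediate bound then sits \emph{above} the circle's slope. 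With the corrected directions the chain closes: for $\ka_m|x|\le 1$ one has $\ka_m s(x)\le\ka_m s_{\mathrm{circ}}(x)=\arcsin(\ka_m x)\le\pi/2$, hence $|\varphi'(x)|\le\tan\bigl(\ka_m s(x)\bigr)\le\tan\bigl(\ka_m s_{\mathrm{circ}}(x)\bigr)=\ka_m x/\sqrt{1-(\ka_m x)^2}$, and integrating in $x$ yields exactly \eqref{dt} (the endpoint $\ka_m|x|=1$ follows by continuity).

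For comparison, the paper obtains the same slope bound without inverting arclength: it sets $N(t)=\max_{0\le x\le t}\sqrt{1+f'(x)^2}$ and $M(t)=\int_0^t N$, derives the differential inequality $M'(t)\cos\bigl(\ka_m M(t)\bigr)\le 1$, integrates it to $\ka_m M(t)\le\arcsin(\ka_m t)$, and concludes $\sqrt{1+f'(t)^2}\le\bigl(1-(\ka_m t)^2\bigr)^{-1/2}$, which is precisely the circle's slope at abscissa $t$. It then finishes not by integrating the slope bound but by Taylor's theorem with integral remainder, $|f(t)|\le\int_0^t(t-x)|f''(x)|\,\rd x\le\int_0^t(t-x)\ka_m\bigl(1-(\ka_m x)^2\bigr)^{-3/2}\rd x$, which evaluates to the right-hand side of \eqref{dt}; your direct integration of $\varphi'$ is an equally valid finish. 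So once the two reversed inequalities are fixed, your proof and the paper's are essentially the same argument --- the paper's bookkeeping with $N$ and $M$ is just a Gronwall-type packaging of the arclength comparison you describe.
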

		\begin{figure}[!htbp]
			\centering
		\begin{tikzpicture}[scale=0.5]
			\draw [->] (-0.5,0)--(4,0);
			\draw [->] (0,-0.4)--(0,2);
			\draw[blue,domain=-0.1:3.8]plot({\x},{1/10*(\x)^2});
			\draw[blue,dashed,domain=-100:-40] plot({5*cos(\x)},{5*sin(\x)+5});
			\draw[blue,dashed,	domain=-100:-40] plot({5*cos(\x)},{-5*sin(\x)-5});
			\draw (3.3,1.089)--(3.3,0);
			\node[right,scale=0.5] at(3.3,1.089) {$P$};
			\node[below,scale=0.5] at(3.3,0) {$t$};
			\node[above,scale=0.5] at(3.1,-0.05) {$P_\perp$};
			\node[below,scale=0.5] at(-0.2,0.02){$P_0$};
			\node [right,scale=0.5] at(4,0) {$x$};
		\end{tikzpicture}
		\caption{Local deviation of $\mathcal{C}$ from its tangent line.}
		\label{localCoordinate}
		\end{figure}
		\begin{proof}
			Establish a local coordinate system (see \cref{localCoordinate})with $P_0$ as the origin and the tangent at $P_0$ aligned with the $x$-axis.Let $\mathcal{C}$ be represented by the function $y = f(x)$. Under this configuration, we obtain:
			\[1+\big(f'(t)\big)^2=\sec^2\int_0^t \frac{f''(x)}{1+\big(f'(x)\big)^2}\rd x=\sec^2\int_0^t \ka(x)\Big(1+\big(f'(x)\big)^2\Big)^{\frac12}\rd x.\]
			Let $N(t)=\max_{0\le x\le t} (1+(f'(x))^2)^{\frac12}$, it is easy to see
			\[N^2(t)\le\sec^2\int_0^t \ka_m N(x)\rd x,\quad N(t)\cos\big(\ka_m\int_0^t N(x)\rd x\big)\le 1.\]
			Next, let $M(t)=\int_0^t N(x)\rd x$, thus $M'(t)\cos(\ka_m M(t))\le 1$. Integrating with respect to $t$ from $0$ to $x$ yields that $\ka_m M(x)\le \arcsin(\ka_m x)$. Therefore we get
			\[N(t)\le \sec\big(\ka_m M(t)\big)\le \sec\big(\arcsin(\ka_m t)\big)=\big(1-(\ka_m t)^2\big)^{-\frac12}.\]
			From the integral form of the remainder in the Taylor expansion, we deduce that
			\begin{align*}
				|PP_\perp|=&|\int_0^t (t-x)f''(x)\rd x\le \int_0^t (t-x)\ka(x)\Big(1+\big(f'(x)\big)^2\Big)^\frac32 \rd x\\
				\le&\int_0^t (t-x)\ka_m\big(1-(\ka_m x)^2\big)^{-\frac32} \rd x=\ka_m^{-1}\big(1-\sqrt{1-(\ka_m t)^2}\big).
			\end{align*}
		\end{proof}
		The following lemma ensures that, under Assumptions \eqref{resolvingInterface}-\eqref{merging} on $\cT_h$, any ``small'' elements in $\cT_h$ has a proper ``large'' neighbouring element.

		\begin{lemma}\label{feasibilityOfMerging}
			Suppose the mesh $\cT_h$ satisfies  Assumptions~\eqref{resolvingInterface}--\eqref{merging}.
			\begin{enumerate}[(i)]
			\item For every $K \in \cTm^{small}$, there exists an element $K' \in \cTm^{large}$ such that $K$ and $K'$ can be merged, i.e., they share a common edge.
			\item For every $K'\in\cTm^{large}$, there are at most two distinct small elements in $\cTm^{small}$ that require merging with $K'$.
			\end{enumerate}
		\end{lemma}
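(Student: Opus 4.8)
The plan is to prove both parts geometrically by analyzing the two element types and exploiting the curvature bound from Lemma~\ref{Lem-C}. For part~(i), let $K\in\cTm^{small}$, so $|K\cap\Om_i|<\de|K|$. By Assumption~\eqref{resolvingInterface}, $K$ is either type-1 or type-2 (up to rotation/symmetry). The key mechanism is that, since $\Om_i$ occupies only a small fraction of $K$, the interface $\Ga$ must pass close to one edge (type-1) or one corner (type-2) of $K$; the candidate merge neighbor $K'$ is the adjacent element on the $\Om_i$-side across that edge. I would first show $K'$ exists as a genuine element of $\cTm$ (it intersects $\Om_i$) and then show $K'\in\cTm^{large}$, i.e.\ $|K'\cap\Om_i|\ge\de|K'|$. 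This is the crux: I must certify that the neighbor is genuinely ``large.''

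The main obstacle is quantifying how much of $K'$ lies in $\Om_i$, and this is exactly where Assumption~\eqref{merging} enters through $T(t)=\tfrac{30t(t+2)}{100t+63}$. The idea is to set up a local coordinate frame and use Lemma~\ref{Lem-C} to bound the deviation of $\Ga$ from its tangent line: within the merge neighborhood the curve stays pinned between two circles of radius $\ka_m^{-1}$ tangent at a reference point. Because $\de|K|$ bounds the area of $K\cap\Om_i$ from above, the tangent line enters $K$ only very near one edge/corner, and then the curvature estimate controls how far $\Ga$ can bend back into $K'$ as we move across the shared edge. Integrating this deviation bound over $K'$ yields a lower bound on $|K'\cap\Om_i|/|K'|$; the algebraic content is precisely that $T(\ka_m h)\le 1-2\de$ guarantees this fraction exceeds $\de$. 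I would carry out the type-1 case first (single edge crossing, geometry one-dimensional along the edge) and then reduce type-2 to it, handling the corner configuration by the same tangent-plus-curvature comparison applied near the pivotal vertex. Assumption~\eqref{connection} is invoked to ensure the relevant neighbor is uniquely determined and that $\Om_i$ does not re-enter $K'$ from a far side in a way that would invalidate the local picture.

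For part~(ii), the goal is a combinatorial bound: each large element $K'$ is requested as a merge partner by at most two small elements. Here I would argue by the local structure of the interface within the $B(P,2\sqrt2 h)$ neighborhood from Assumption~\eqref{connection}. A small element $K$ wanting to merge with $K'$ must share an edge with $K'$ and lie on the $\Om_i$-thin side; since $K'$ has four edges and $\Ga$ enters the $2\sqrt2 h$-ball in a connected arc (so it cannot oscillate across many edges of $K'$), the number of adjacent small elements that $\Ga$ renders small is limited. The refined point is that by the curvature/connectedness constraints $\Ga$ can make at most two such ``small-inducing'' passes adjacent to a single $K'$; I would enumerate the possible positions of these small neighbors relative to the edges of $K'$ and rule out a third by the same deviation estimate used in part~(i), showing a third small neighbor would force $\Ga$ to bend more than $t=\ka_m h\le1$ permits, i.e.\ a turning angle exceeding $\tfrac{\pi}{2}$ within the ball, contradicting Assumption~\eqref{connection}. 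The case analysis (type-1 versus type-2 for each putative small neighbor) is routine once the governing deviation inequality is in hand, so I would organize it as a short enumeration rather than a separate computation for each configuration.
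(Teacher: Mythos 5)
Your treatment of part (i) follows essentially the paper's own route: pin $\Ga$ between its tangent line and circles of radius $\ka_m^{-1}$ via Lemma \ref{Lem-C}, lower-bound $|K'\cap\Om_i|$ by the area of $K'$ cut off by such a disk, and let Assumption \eqref{merging} (i.e.\ $T(t)\le 1-2\de$) certify that this area exceeds $\de|K'|$. The paper's details differ mildly from your sketch: type-1 is handled by elementary chord/arc bounds (in one subcase Assumption \eqref{connection} already gives $K'\subset\Om_i$, in another the sagitta bound $d\le(1-\tfrac{\sqrt3}{2})h$ suffices), and type-2 is not reduced to type-1 but analyzed directly with a disk $\mathcal{O}$ tangent to $\Ga$ at $A$ and a parametrized area $\mathbb{A}_1$ that is minimized at the corner configuration $a=b=0$. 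One ingredient you omit is the tie-breaking rule for type-2 elements --- $K$ is merged toward the side whose cut edge segment is longer ($|AC|\ge|BC|$) --- which is exactly what makes the corner configuration the worst case; but the overall mechanism of your part (i) matches the paper.

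Part (ii) is where your proposal has a genuine gap. You propose to rule out a third small neighbor of $K'$ by showing it would force a turning angle of $\Ga$ exceeding $\pi/2$ within $B(P,2\sqrt2 h)$, contradicting Assumption \eqref{connection}. This fails on both counts. First, $\ka_m h\le 1$ does not cap the turning angle inside that ball at $\pi/2$: turning is bounded by $\ka_m$ times arc length, and the arc inside $B(P,2\sqrt2 h)$ can be several multiples of $h$ long (the paper even remarks that the turning angle within a \emph{single} element may reach $\pi/2$). Second, a large turning angle does not contradict Assumption \eqref{connection}, which is purely a connectivity statement: the precise configuration that must be excluded (Fig.~\ref{case1}, a near-semicircular arc of radius at least $\ka_m^{-1}$ passing over $K'$, with $K_1,K_2,K_3$ all apparently small) turns by almost $\pi$ while both $B(P,2\sqrt2 h)\cap\Om_1$ and $B(P,2\sqrt2 h)\cap\Om_2$ remain connected, so no angle/connectivity argument can dismiss it. The paper instead excludes it quantitatively, by the same area machinery as in part (i): letting $C$ be the point of $\Ga$ whose tangent is parallel to the bottom edge of $K'$ and assuming without loss of generality that $C$ lies nearer the left edge, one shows $|K_2\cap\Om_i|\ge T_1(t)|K_2|$ with an explicit $T_1(t)>T(t)$, whence $K_2$ is not small --- contradicting $K_2\in\cTm^{small}$. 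Your part (ii) would need to be replaced by such a quantitative estimate showing that one of the three putative small elements in fact has a large intersection with $\Om_i$.
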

		\begin{proof}
			We begin by considering $K$ as a type-1 small element and analyze its three representative cases illustrated in \cref{proofType1}. In the leftmost case of \cref{proofType1}, Assumption \eqref{connection} ensures that $K'$ lies entirely within $\Om_i$. For the middle case, let $A$ and $B$ denote the two intersection points between the bottom edge of $K$ and the interface $\Ga$. Define $P_0$ as the point on $\Ga$ between $A$ and $B$ that achieves the maximum distance $d$ to the secant line $\overline{AB}$. Under the condition $\ka_m h \leq 1$, it follows that $d \leq \left(1 - \frac{\sqrt{3}}{2}\right)h$, which implies $|K' \cap \Om_i| \geq \frac{\sqrt{3}}{2} |K'| > \delta |K'|$. Finally, in the rightmost case, the intersection satisfies $|K' \cap \Om_i| \geq \frac{\pi}{4} \ka_m^{-2} \geq \frac{\pi}{4} |K'| > \de |K'|$.
			\begin{figure}[h]
				\centering
				\subfloat[]{
					\label{proofType1}
					\begin{tikzpicture}[scale=0.75]
						\draw (0,0) grid (1,2);
						\draw [blue] (-0.2,1.1)--(1.2,1.2);
						\node [scale=0.6,right] at (0.3,0.5) {$K'$};
						\node [scale=0.6,right] at (0.3,1.5) {$K$};
						\draw (2,0) grid (3,2);
						\draw[blue,domain=1.8:3.2]plot({\x},{(\x-2.15)*(\x-2.85)/2+1});
						\node [scale=0.6,right] at (2.3,0.5) {$K'$};
						\node [scale=0.6,right] at (2.3,1.5) {$K$};
						\node [blue,above,scale=0.5] at(2.15,1) {$A$};
						\node [blue,scale=0.5,above] at (2.85,1) {$B$};
						\draw (4,0) grid (5,2);
						\draw[blue,domain=3.8:5.2]plot({\x},{-(\x-4.15)*(\x-4.85)/2+1});
						\node [scale=0.6,right] at (4.3,0.5) {$K'$};
						\node [scale=0.6,right] at (4.3,1.5) {$K$};
						\node [blue,above,scale=0.5] at(4.1,1.06125) {$A$};
						\node [blue,scale=0.5,above] at (4.9,1.06125) {$B$};
						\node [below,scale=0.6] at (2.5,-0.1) {$\Om_i$};
					\end{tikzpicture}
				}
				\qquad\qquad
				\subfloat[]{
					\label{proofType2}
					\begin{tikzpicture}[scale=0.75]
						\draw (-4,-1) grid (-2,1);
						\draw [blue,domain=20:70] plot ({2.2*cos(\x)-4.8},{2.2*sin(\x)-1});
						\node[blue,right,scale=0.5] at (-3.05,0.35) {$A$};
						\node[blue,right,scale=0.5] at (-2.9,0) {$B$};
						\node[left,scale=0.5] at(-2.95,0) {$C$};
						\node[left,scale=0.5] at (-4,-1) {$D$};
						\node[left,scale=0.5] at (-3.9,0) {$D_1$};
						\draw (-3.5,0.9763)--(-2.9,0.1226);
						\draw[red] (-3.8181,-0.3101) circle (1);
						\draw[red](-3.8181,-0.3101)--(-3,0.2649);
						\node[red,left,scale=0.5] at(-3.73,-0.3101) {$O$};
						\node[red,left,scale=0.4] at (-2.75,-0.1) {$B_1$};
						\node[red,left,scale=0.5] at (-3.9,0.6) {$D_2$};
						\node[right,scale=0.5] at(-3.2,0.57) {$l$};
						\draw (-4,-1)--(-3,0);
						\node[right,scale=0.6] at(-2.5,0.5) {$K$};
						\node[right,scale=0.6] at(-3.7,0.3) {$K'$};
						\node[right,scale=0.6] at(-4.6,-0.5) {$\Om_i$};
						\draw (-3.6291,0.673)--(-3.4,0.8340);
						\node[right,scale=0.5]at(-3.45,0.8340){$P_\perp$};
						\node[red,below,scale=0.5] at(-3.6291,0.7){$P'$};
						\node[blue,left,scale=0.5] at(-3.4,0.85) {$P$};
						\draw (-3.38,0.8056)--(-3.4209,0.7768)--(-3.4409,0.8052);
						\draw[->,red] (-3.7,-0.32)--(-3.5,-0.46);
						\node[blue,scale=0.6,left] at(-4,1){$\Ga$};
						\fill[red] (-3.5621,-0.5621) circle(0.5pt);
						\draw[red,dotted] (-3.5621,-0.5621)--(-3,0.2649);

						\draw (-1,-1) grid (1,1);
						\draw[red] (-0.5,-0.5) circle (1);
						\draw[dotted] (-1,-1)--(0,0);
						\draw  (0,-0.5)--(-0.5,-0.5)--(-0.5,0);
						\draw (0,-0.45)--(-0.05,-0.45)--(-0.05,-0.5);
						\draw (-0.55,0)--(-0.55,-0.05)--(-0.5,-0.05);
						\node[red,left,scale=0.5] at(-0.5,-0.5) {$O$};
						\fill[red] (-0.5,-0.5) circle(0.5pt);
						\node[left,scale=0.5] at(-0.48,-0.25){$s$};
						\node[below,scale=0.5]at(-0.25,-0.47){$s$};
						\node[left,scale=0.5] at(0.05,0) {$C$};
						\node[left,scale=0.5] at (-1,-1) {$D$};
						\node[left,scale=0.5] at (-0.9,0) {$D_1$};
						\node[red,left,scale=0.5] at (0.45,0) {$B_1$};
						\node[red,left,scale=0.5] at (-0.9,0.45) {$D_2$};
						\node[blue,right,scale=0.5] at (-0.1,0.4) {$A$};
						\draw[->] (1,0)--(1.2,0);
						\draw[->] (0,1)--(0,1.2);
						\node[below,scale=0.5] at (1.15,0){$x$};
						\node[right,scale=0.5] at(0,1.1) {$y$};
					\end{tikzpicture}
				}
				\caption{$K$ is merged with $K'$.}
			\end{figure}

			For the case in \cref{proofType2}, where $K$ is a type-2 small element, assume the edge segments $AC$ and $BC$ cut by $\Om_i$ satisfy $|AC| \geq |BC|$. Let $l$ denote the tangent to $\Ga$ at $A$, and let $\mathcal{O}$ be the closed disk of radius $\ka_m^{-1}$ tangent to $l$ at $A$, with its center $O$ lying within $\Om_i$. By \cref{Lem-C}, the inequalities $|PP_\perp| \leq |P'P_\perp|$ and $|B_1C| \leq |BC|$ imply $|K' \cap \Om_i| \geq |K' \cap \mathcal{O}|$.
Introduce the dimensionless parameters $a$, $b$, and $y$ such that $|AC| = a h$, $|B_1C| = b h$ (with $b \in [0, a]$), and $|D_1D_2| = y h$. For fixed $\ka_m$ and $a$, observe that $y$ decreases as $b$ increases. The intersection area is
			\[\mathbb{A}_1=|K'\cap\mathcal{O}|=\frac{(ah+yh)h}{2}+\ka_m^{-2}\theta(y)-\frac12\ka_m^{-2}\sin\big(2\theta(y)\big),\]
where $\theta(y) = \arcsin\left(\frac{\sqrt{1 + (y - a)^2}}{2 (\kappa_mh)^{-1}}\right)$. A direct calculation shows $\frac{\partial \mathbb{A}_1}{\partial y} \geq 0$, indicating $\mathbb{A}_1$ is minimized when $b=a$. Furthermore, as illustrated in the rightmost subfigure of \cref{proofType2}, $\mathbb{A}_1$ attains its global minimum when $a = b = 0$. Thus, by Assumption~\eqref{merging}, we conclude
			\begin{align*}
				|K'\cap\mathcal{O}|&\ge\frac{h^2}{t^2}\int_0^t\Big( \big(\frac12-\sqrt{2}(x-t)-(x-t)^2\big)^\frac12-\frac{\sqrt{2}}{2}\Big) \rd x\\
				&=\frac{1}{4t^2}+\frac{\pi}{8t^2}-(\frac{\sqrt{2}}{4t}-\frac12)\sqrt{\frac{1}{2t^2}+\frac{\sqrt{2}}{t}-1}-\frac{1}{2t^2}\arcsin(\frac{\sqrt{2}}{2}-t)-\frac{\sqrt{2}}{2t}\\
				&\ge \frac{1-T(t)}{2}|K'|\ge\de|K'|,
			\end{align*}
			the proof of the second inequality is tedious but trivial; we therefore omit it here for brevity.
			
 			\begin{figure}[htbp]
				\centering
				\subfloat[]{
					\label{case1}
					\begin{tikzpicture}[scale=0.5]
						\draw [black] (1,0) grid (4,2);
						\draw [blue,domain=5:175] plot ({1.4*cos(\x)+2.5},{1.4*sin(\x)-0.35});
						\draw [red,domain=0:180] plot ({1.2*cos(\x)+2.5},{1.2*sin(\x)-0.15});
						\node [blue,right,scale=0.5] at (1.05,0.5) {$\Ga$};
						\node [black,right,scale=0.5] at (2.3,1.6) {$K_1$};	
						\node [black,right,scale=0.5] at (1.5,0.3) {$K_2$};
						\node [black,right,scale=0.5] at (3.3,0.5) {$K_3$};
						\node[above,scale=0.5] at(2.5,1){$C$};
						\node [black,right,scale=0.5] at (2.3,0.5) {$K'$};
						\node [black,right,scale=0.5] at (2.3,-0.2) {$\Om_i$};
					\end{tikzpicture}
				}
				\qquad\qquad
				\subfloat[]{
					\label{case2}
					\begin{tikzpicture}[scale=0.5]
						\draw [black] (-7,0) grid (-5,2);
						\draw [blue] (-7.3,1.5)--(-5.5,-0.3);
						\node [black,right,scale=0.5] at (-6.73,0.5) {$K'$};
						\node [black,right,scale=0.5] at (-5.7,0.5) {$K_1$};
						\node [black,right,scale=0.5] at (-6.7,1.5) {$K_2$};
						\node [black,right,scale=0.5] at (-7.3,-0.2) {$\Om_i$};

						\draw (-3.5,0.5) rectangle (-0.5,1.5);	
						\draw (-2.5,0.5)--(-2.5,1.5);
						\draw (-1.5,0.5)--(-1.5,1.5);
						\draw [blue,domain=-3.1:-1] plot ({\x},{1.3-(\x+2.4)*(\x+1.7)/1.1});
						\node [black,right,scale=0.5] at (-3.4,1.1) {$K_1$};
						\node [black,right,scale=0.5] at (-2.2,0.9) {$K'$};
						\node [black,right,scale=0.5] at (-1.2,0.9) {$K_2$};
						\node [black,right,scale=0.5] at (-2.2,0.3) {$\Om_i$};	
					\end{tikzpicture}
				}				
				\caption{No more than two small elements in $\cTm^{small}$ require merging with $K'\in\cTm^{large}$.}	
			\end{figure}
				
			For the second part of the lemma, consider the scenario (illustrated in \cref{case2}) where multiple small elements merge with a single element. Suppose three small elements $K_1,K_2,K_3$ are merged with $K'$, as shown in \cref{case1}. Let $C$ denote the point on the interface $\Ga$ where the tangent line is parallel to the bottom edge of $K'$. Without loss of generality, assume $C$ lies closer to the left edge of $K'$. In the same way as in the proof of (i), thanks to Assumption~\eqref{merging}, we derive:
			\[|K_2\cap\Om_i|\ge \left(\frac12 a(t)b(t)+t^{-2}\theta_1(t)-\frac{t^{-2}}{2}\sin\big(2\theta_1(t)\big)\right)|K_2|:=T_1(t)|K_2|,\]
where
			\[a(t)=\sqrt{2t^{-1}-1}-\frac12,b(t)=\sqrt{t^{-2}-\frac14}-t^{-1}+1,\theta_1(t)=\arcsin\Big(t\frac{\sqrt{a(t)^2+b(t)^2}}{2}\Big).\]
			A direct calculation confirms $T_1(t)>T(t)\ge\de$, which is a contradiction.
		\end{proof}


		We now introduce a merging algorithm, which starts from the pairwise merging according to \cref{feasibilityOfMerging}, and ends at resolving cases where large elements require repeated merging.  This procedure inherently defines the macro-elements $M_i(K)$.
		\begin{algorithm}
			\caption{Merging algorithm}\label{mergingAlgorithm}
			\begin{algorithmic}[1]
				\REQUIRE Mesh satisfying Assumptions \eqref{resolvingInterface}--\eqref{merging}.
				\FOR{$i=1,2$}
				\STATE Let $N_i(K')=\emptyset,~\forall K'\in\cTm^{large}.$
				\FOR{$K\in\cTm^{small}$}
				\STATE Find a neighbor $K'\in\cTm^{large}$ for $K$ according to Lemma \ref{feasibilityOfMerging}, let $N_i(K')=N_i(K')\cup\{K\}$.
				\ENDFOR
				\FOR{$K'\in\cTm^{large},~\mathrm{card}(N_i(K'))>0$}
				\IF{$\mathrm{card}(N_i(K'))=1$}
				\STATE Define $M_i(N_i(K'))=M_i(K')=K'\cup N_i(K').$
				\ELSE
				\STATE Let $\widetilde{K}$ be the minimum rectangle containing all elements in $N_i(K')$ and $K'$,  define $M_i(K)=\widetilde{K}$, for any $K\in\cTm$ contained in $\widetilde{K}$.
				\ENDIF
				\ENDFOR
				\ENDFOR
			\end{algorithmic}
		\end{algorithm}

		The following theorem says that the mesh obtained by the merging algorithm is well-defined.
		\begin{theorem} \label{thmmerging}
		Suppose that the mesh $\cT_h$ satisfies  Assumptions~\eqref{resolvingInterface}--\eqref{merging}. Let $\cMh$ be the mesh obtained by the merging \cref{mergingAlgorithm}.  We have for $i=1, 2$, {\rm (i)} $\mathrm{diam}\, K\lesssim h$, for any $K\in\cMm$, {\rm (ii)}   $K_1^{\circ}\cap K_2^{\circ}=\emptyset$, for  any $K_1,K_2\in\cMm$.
		\end{theorem}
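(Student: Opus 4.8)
The plan is to prove the two assertions separately, with (i) an immediate geometric consequence of \cref{feasibilityOfMerging} and (ii) the substantive part, where Assumption~\eqref{connection} is used to exclude overlapping bounding rectangles.

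For (i), I would first record the possible shapes of a macro-element. If $K\in\cTm^{large}$ is absorbed by no rectangle, then $M_i(K)=K$ is a single $h\times h$ cell. Otherwise, by \cref{feasibilityOfMerging}(ii) each large element $K'$ receives at most two small neighbours, and by \cref{feasibilityOfMerging}(i) each such neighbour shares an edge with $K'$; hence the minimal rectangle $\widetilde K$ produced by \cref{mergingAlgorithm} is spanned by $K'$ together with at most two edge-adjacent cells, so it is contained in a $2\times 2$ block of cells. In every case $M_i(K)$ is a union of $O(1)$ grid cells lying within a fixed multiple of $h$ of one another, whence $\diam M_i(K)\le 2\sqrt2\,h\lesssim h$. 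This settles (i) with no further work.

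For (ii) the first step is a reduction. Since $\cTh$ is Cartesian, the minimal rectangle of a family of whole cells is again a union of whole cells, so every macro-element is a union of grid cells, and two macro-elements overlap in interior if and only if they share a common cell. Moreover \cref{mergingAlgorithm} assigns $M_i(K)=\widetilde K$ to every cell $K\subset\widetilde K$, so any cell meeting a merged macro-element is declared to belong to it and is never a stand-alone cell; thus the single-cell macro-elements are automatically disjoint from everything else. Consequently it suffices to show that no grid cell $Q$ lies in two distinct merged macro-elements $\widetilde K_a\ne\widetilde K_b$ (including the domino case $\mathrm{card}(N_i(K'))=1$), built around distinct large elements $K_a',K_b'\in\cTm^{large}$. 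Suppose such a $Q$ exists. By the shape analysis in (i) both clusters lie within a $3\times 3$ block of cells around $Q$, so $K_a'$, $K_b'$ and their assigned small cells sit within a fixed $O(h)$-neighbourhood of $Q$; fixing an interface point $P\in\Ga\cap Q$, the operative constraint will be the connectivity of $\Om_i$ inside $B(P,2\sqrt2\,h)$ guaranteed by Assumption~\eqref{connection}. Each $\widetilde K_a,\widetilde K_b$ contains a large cell with $|K'\cap\Om_i|\ge\de|K'|$ together with small cells through which $\Ga$ passes carrying at least a $(1-\de)$-fraction of their area into $\Om_{3-i}$. I would then run through the admissible local interface patterns (the type-1 and type-2 configurations of \cref{elementType} with their rotations and reflections) and show that two such coherent clusters sharing $Q$ force $\Om_i\cap B(P,2\sqrt2\,h)$ to split into two components separated by $\Ga$, contradicting Assumption~\eqref{connection}; the only alternative is $\widetilde K_a=\widetilde K_b$. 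This yields (ii).

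The main obstacle is exactly the geometric enumeration in the last step: one must classify how $\Ga$ can thread the $3\times 3$ neighbourhood of $Q$ under Assumptions~\eqref{resolvingInterface} and \eqref{merging}, and verify in each pattern that a single connected $\Om_i$-component is incompatible with two overlapping bounding rectangles. The curvature bound \eqref{Tt}, which via \cref{Lem-C} confines $\Ga$ between two circles of radius $\ka_m^{-1}$, is what keeps this enumeration finite by limiting how sharply $\Ga$ can turn inside a single $2\times 2$ block, while Assumption~\eqref{connection} is precisely the hypothesis that converts ``two overlapping clusters'' into ``a disconnected $\Om_i$'', as illustrated by \cref{figA2}.
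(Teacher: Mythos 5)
Your proposal follows essentially the same route as the paper: part (i) is deduced from \cref{feasibilityOfMerging}(ii) (each large element absorbs at most two small neighbours, so every macro-element is a union of $O(1)$ adjacent cells), and part (ii) is the same contradiction argument, in which an overlap of two macro-elements forces $\Om_i$ to be disconnected inside a ball $B(P,2\sqrt{2}h)$ centred at an interface point, violating Assumption~\eqref{connection}, with the case enumeration left partly implicit exactly as in the paper. One minor quantitative slip in your (i): a large element can receive its two small neighbours on opposite sides (the right subfigure of \cref{case2}), so a macro-element may be a $1\times 3$ block rather than sit inside a $2\times 2$ block, giving $\diam M_i(K)\le\sqrt{10}\,h$ rather than $2\sqrt{2}\,h$ — this does not affect the conclusion $\diam K\lesssim h$.
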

		\begin{proof}
			\begin{figure}[htbp]
				\centering
				\subfloat[]{
					\label{overlap1}
					\begin{tikzpicture}[scale=0.5]
						\draw (0,0) rectangle (2,2);
						\draw (1,1) rectangle (3,3);
						\draw [dashed] (0,1)--(2,1);
						\draw [dashed] (1,0)--(1,2);
						\draw [dashed] (1,2)--(3,2);
						\draw [dashed] (2,1)--(2,3);
						\draw [dashed] (1,0)--(3,2);
						\draw [blue] (-0.3,1.5)--(1.5,-0.3);
						\draw [blue] (1.5,3.3)--(3.3,1.5);
						\node [below,scale=0.5] at(1,0) {$C$};
						\node [right,scale=0.5] at(3,2) {$D$};
						\node [right,scale=0.6] at(0.3,1){$K_1$};
						\node [right,scale=0.6] at (2.2,2) {$K_2$};
						\node [right,scale=0.6] at (1.3,1.5) {$K'$};
						\node [right,scale=0.7] at (0,0.2) {$\Om_i$};
					\end{tikzpicture}
				}
				~
				\subfloat[]{
					\label{overlap2}
					\begin{tikzpicture}[scale=0.5]
						\draw (0,0) rectangle (2,2);
						\draw (1,0) rectangle (3,2);
						\draw [dashed] (0,1)--(3,1);
						\draw [blue] (-0.3,1.5)--(1.5,-0.3);
						\draw [blue] (1.5,2.3)--(3.3,0.5);
						\draw [dashed] (1,0)--(3,2);
						\node [below,scale=0.5] at(1,0) {$C$};
						\node [right,scale=0.5] at(3,2) {$D$};
						\node [right,scale=0.6] at(0.3,1){$K_1$};
						\node [right,scale=0.6] at (2.2,1) {$K_2$};
						\node [right,scale=0.6] at (1.3,0.5) {$K'$};
						\node [right,scale=0.7] at (0,0.2) {$\Om_i$};
					\end{tikzpicture}
				}
				~
				\subfloat[]{
					\label{overlap3}
					\begin{tikzpicture}[scale=0.5]
						\draw (-3.5,0.5) rectangle (-0.5,1.5);	
						\draw [dashed] (-2.5,0.5)--(-2.5,1.5);
						\draw (-1.5,0.5)--(-1.5,1.5);
						\draw [blue,domain=-3.2:-0.9] plot ({\x},{1-(\x+2.4)*(\x+1.7)/2});
						\draw (-1.5,0.5) rectangle (0.5,2.5);
						\draw [dashed] (-0.5,1.5)--(0.5,1.5);
						\draw [dashed] (-0.5,1.5)--(-0.5,2.5);
						\draw [blue] (-0.9,2.7)--(0.7,1.1);
						\draw [dashed] (-1.5,0.5)--(0.5,2.5);
						\node [black,right,scale=0.6] at (-3.1,1.1) {$K_1$};
						\node [black,right,scale=0.6] at (-0.5,1.5) {$K_2$};
						\node [blue,right,scale=0.6] at (-1.2,0.9) {$K'$};
						\node [black,right,scale=0.7] at (-2.7,0.3) {$\Om_i$};
						\node [below,scale=0.5] at(-1.5,0.5) {$C$};
						\node [right,scale=0.5] at(0.5,2.5) {$D$};
					\end{tikzpicture}
				}
				~
				\subfloat[]{
					\label{overlap4}
					\begin{tikzpicture}[scale=0.5]
						\draw (-2.5,0.5) rectangle (-0.5,1.5);	
						\draw [blue] (-1.6,1.7)--(-0.8,0.4);
						\draw (-1.5,0.5) rectangle (0.5,2.5);
						\draw [dashed] (-0.5,1.5)--(0.5,1.5);
						\draw [dashed] (-0.5,1.5)--(-0.5,2.5);
						\draw [blue] (-0.9,2.7)--(0.7,1.1);
						\draw [dashed] (-1.5,0.5)--(0.5,2.5);
						\node [black,right,scale=0.6] at (-2.3,1.1) {$K_1$};
						\node [black,right,scale=0.6] at (-0.5,1.5) {$K_2$};
						\node [blue,right,scale=0.6] at (-1.2,0.9) {$K'$};
						\node [black,right,scale=0.7] at (-3.1,1.1) {$\Om_i$};
						\node [left,scale=0.5] at(-1.5,0.5) {$C$};
						\node [right,scale=0.5] at(0.5,2.5) {$D$};
					\end{tikzpicture}
				}
				~
				\subfloat[]{
					\label{overlap5}
					\begin{tikzpicture}[scale=0.5]
						\draw (-2.5,0.5) rectangle (-0.5,1.5);	
						\draw [blue,domain=-55:55] plot ({0.7*cos(\x)-2.1},{0.7*sin(\x)+1});
						\draw (-1.5,0.5) rectangle (0.5,2.5);
						\draw [dashed] (-0.5,1.5)--(0.5,1.5);
						\draw [dashed] (-0.5,1.5)--(-0.5,2.5);
						\draw [blue] (-0.9,2.7)--(0.7,1.1);
						\draw [dashed] (-1.5,1)--(0.5,2.5);
						\node [black,right,scale=0.6] at (-2.3,1.1) {$K_1$};
						\node [black,right,scale=0.6] at (-0.5,1.5) {$K_2$};
						\node [blue,right,scale=0.6] at (-1.2,0.9) {$K'$};
						\node [black,right,scale=0.7] at (-3.1,1.1) {$\Om_i$};
						\node [left,scale=0.5] at(-1.5,1) {$C$};
						\node [right,scale=0.5] at(0.5,2.5) {$D$};
					\end{tikzpicture}
				}
				\caption{The presence of overlapping macro-elements contradicts Assumption \eqref{connection}.}	\label{overlap}
				\vspace{-10pt}
			\end{figure}
			
			By \cref{feasibilityOfMerging}(ii), we have $\mathrm{card}(N_i(K'))\le 2$, $\forall K'\in\cTm^{large}$, which implies that (i) holds. To verify (ii), suppose there exist two macro-elements $K_1, K_2 \in \cMm$ with overlapping interiors ($K_1^\circ \cap K_2^\circ \neq \emptyset$). Consider the first overlap scenario of \cref{case2}, illustrated in \cref{overlap}. In this case, we can always identify points $C$ and $D$ with $|CD|\le 2\sqrt{2}h$, such that there exists a point on $\Ga$ violating Assumption \eqref{connection}, leading to a contradiction. Other overlap configurations are similarly incompatible with Assumptions \eqref{resolvingInterface}-\eqref{merging} and can therefore be ruled out. Thus, condition (ii) also holds. 	
		\end{proof}

		\begin{remark} {\rm (i)}		In comparison to the algorithm presented in \cite{chen2022}, our algorithm exhibits several notable differences. Firstly, we allow for more kinds of interface element configurations (see e.g.  the 2nd, 3rd subfigures in \cref{type1} ). While our analysis remains valid under the connectivity condition in \cite[Definition 3.1]{chen2022}, we adopt the weaker Assumption \eqref{connection} to broaden applicability. A key distinction lies in the geometric criteria: our method enforces merging feasibility through an area-based threshold \cref{small} and curvature constraints (Assumption \eqref{merging}), whereas \cite{chen2022} relies on side-length criteria and chains of interface elements. Furthermore, in this work, the merging is performed separately for two subregions, which to some extent reduces the complexity of the algorithm.

			{\rm (ii)} Practically, merging is operationally realized through congruence transformations of the stiffness matrix. Let $\tilde{\mathcal{N}}_{h,1} = \{\tilde{z}_1^1, \ldots, \tilde{z}_1^{\tilde{N}_1}\}$ and $\tilde{\mathcal{N}}_{h,2} \setminus \partial\Omega = \{\tilde{z}_2^1, \ldots, \tilde{z}_2^{\tilde{N}_2}\}$ denote the nodal sets of the pre-merged DG-FE spaces, which inherit the discontinuities of $V_{h,1}$ and $V_{h,2}$. For $v_h \in V_h$, define the extended nodal vector $\tilde{\mathbf{v}} = \big(v_h(\tilde{z}_1^1), \ldots, v_h(\tilde{z}_1^{\tilde{N}_1}), v_h(\tilde{z}_2^1), \ldots, v_h(\tilde{z}_2^{\tilde{N}_2})\big)$. Merging corresponds to interpolating $\tilde{\mathbf{v}}$ onto the merged nodal vector $\mathbf{v}$ (defined in Section \ref{condition-number}) via a bounded linear operator represented by a matrix $B$ with $O(1)$ entries, such that $\tilde{\mathbf{v}} = B\mathbf{v}$. The merged stiffness matrix $A$ and load vector $\mathbf{F}$ are then derived from their unmerged counterparts $\widetilde{A}$ and $\widetilde{\mathbf{F}}$ as $A = B^T \widetilde{A} B$ and $\mathbf{F} = B^T \widetilde{\mathbf{F}}$.
			\end{remark}

			\section{Numerical tests}\label{numerical-test}

			In this section, we illustrate the convergence rates of the proposed methods and demonstrate the effectiveness of the merging algorithm by some numerical tests.  The algorithms are implemented in MATLAB.
			
			As in \cite{hwx17}, we transform the integral domain to a proper reference domain and then resort to the one dimensional
			Gauss quadrature rules. The influence of quadrature formulas as well as the approximation of the
			interface on the error estimates will be considered in a separate work. We refer to \cite{bhl17,L16} for analyses of
			geometry error for some unfitted finite element methods and refer to \cite{L16,S15,fo06} for works on
			computing integrals on the domain with curved boundary.

			\begin{example}\label{ex1}
				The domain $\Om$ is the unit square $(0,1)\times(0,1)$ and the interface $\Ga$ is a 5-petal flower
				centered at $(0.5,0.5)$. The interface is defined in polar coordinates $(R, \theta)$ relative to the pole at $(\frac{1}{2}, \frac{1}{2})$ by the equation: $R=\frac{1}{2}+\frac{1}{7} \sin 5\theta$. The domain $\Om$ is partitioned using uniform Cartesian grids with mesh size $h$, as illustrated in Fig.~\ref{F1}. The source term $f$, Dirichlet data $g_D$, and Neumann data $g_N$ are prescribed such that the exact solution to the interface problem \cref{elliptic-interface} is the following piecewise smooth function:
				\begin{align}
				u(x,y)=\begin{cases}
				\frac{1}{\al_1}\exp(xy),&(x,y)\in\Om_1,\\
				\frac{1}{\al_2}\sin(\pi x)\sin(\pi y),~&(x,y)\in\Om_2,
				\end{cases}
				\end{align}
				where $\al_1=1000, \al_2=1$ or $\al_1=1,\al_2=1000.$
			\end{example}
			\begin{figure}[htbp]
				\centering
				\subfloat[]{
					\includegraphics[scale=0.26]{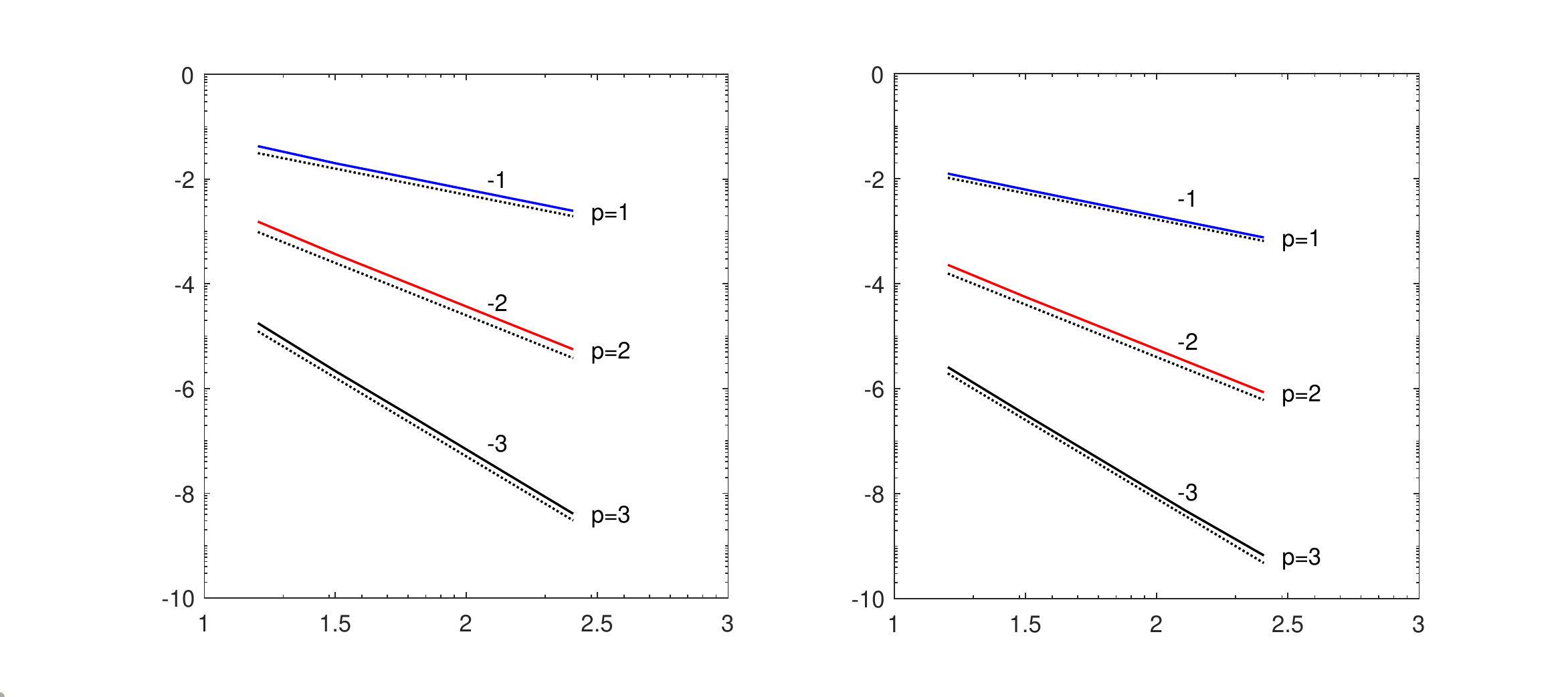}\label{F6}
				}
				\subfloat[]{
					\includegraphics[scale=0.26]{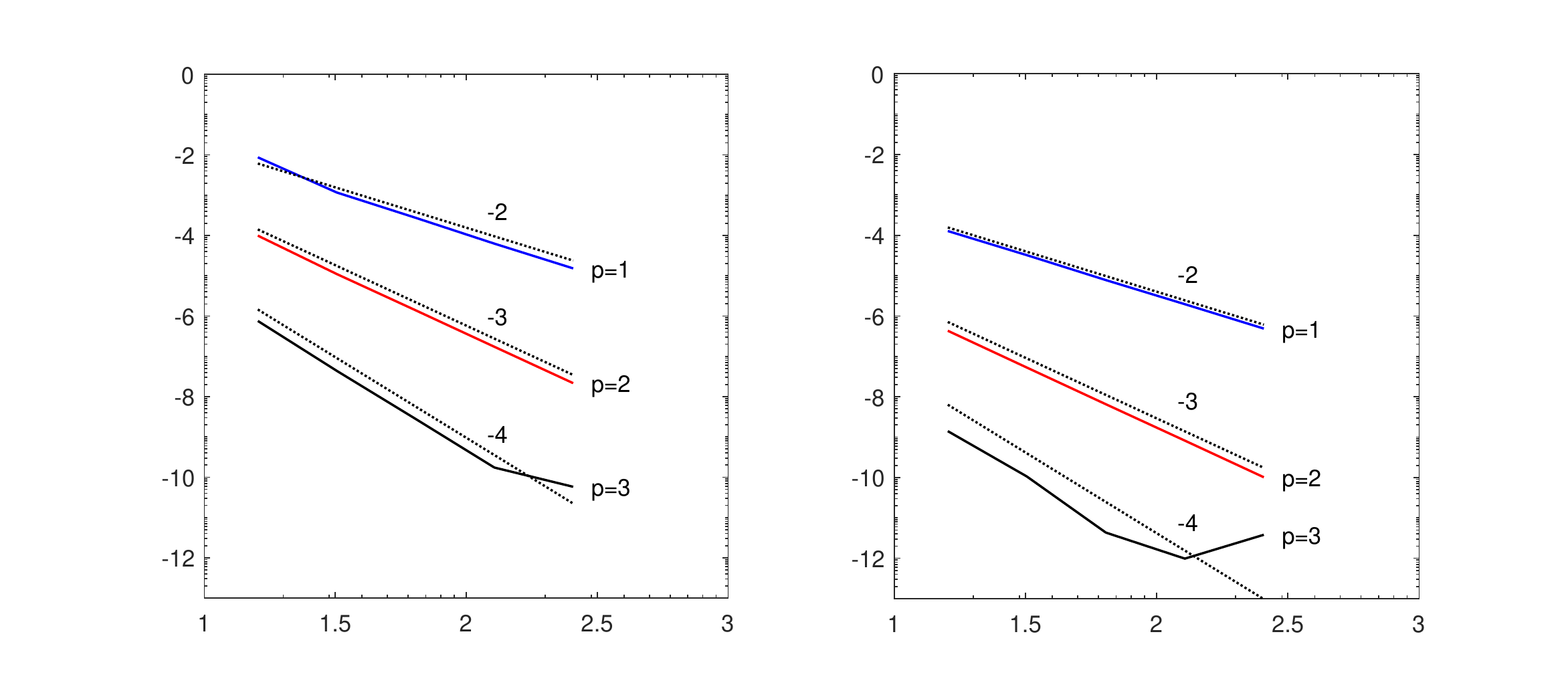}\label{F7}
				}
				\caption{$\lg\frac{\abs{u-u_h}_{1,h}}{\abs{u}_{1,h}}$ versus $\lg\frac{1}{h}$ in \eqref{F6} and $\lg\frac{\norm{u-u_h}_{0,\Om}}{\norm{u}_{0,\Om}}$ versus $\lg\frac{1}{h}$ in \eqref{F7}
				with $h=\frac{1}{16},\frac{1}{32},\cdots,\frac{1}{256}$ for $p=1,2,3,$ respectively. Left:
				$\al_1=1000,\al_2=1.$ Right: $\al_1=1,\al_2=1000.$ }
			\end{figure}
	
			In \cref{ex1}, the interface has a maximum curvature of approximately $\ka_m\approx50.3$. By Assumption \eqref{merging} with $\de=\frac14$, the theoretical threshold $h_0\le\frac{1}{58}$ guarantees successful execution of the merging algorithm. Practically, for the interface in \cref{ex1}, we observe that the algorithm remains robust even for $h_0=\frac{1}{16}$. We now present numerical results for the SUIPDG-FEM. Following \cite{hwx17}, the penalty parameter is set to $\ga=100$.

			First, we investigate the $h$-convergence rate of the SUIPDG-FEM. Denote the energy norm by
			\[\quad\abs{v}_{1,h}:=\big\|\al^{\frac{1}{2}}\nabla v\big\|_{0,\oo}.\]
			\cref{F6} presents the relative energy norm errors for both coefficient choices $\al(x)$ and polynomial degrees $p=1,2,3$. In all cases, the observed $O(h^p)$ convergence rate verify the theoretical estimate in \cref{Thm-Err-estimates-H1}. Similarly, \cref{F7} displays the relative $L^2$-norm errors, which exhibit the expected $O(h^{p+1})$ convergence order (\cref{Thm-Err-estimates-L2}), except for minor deviations in the final refinement steps when $p = 3$. These deviations are likely attributable to round-off errors, as the relative $L^2$-error falls below $10^{-10}$ at these stages.
			\cref{F8} demonstrates the $h$-convergence rate for the relative flux error, confirming the $O(h^p)$ theoretical bound in Theorem~\ref{Thm-Err-estimates-flux}.

			\begin{figure}[htbp]
				\centering
				\subfloat[]{
					\includegraphics[scale=0.26]{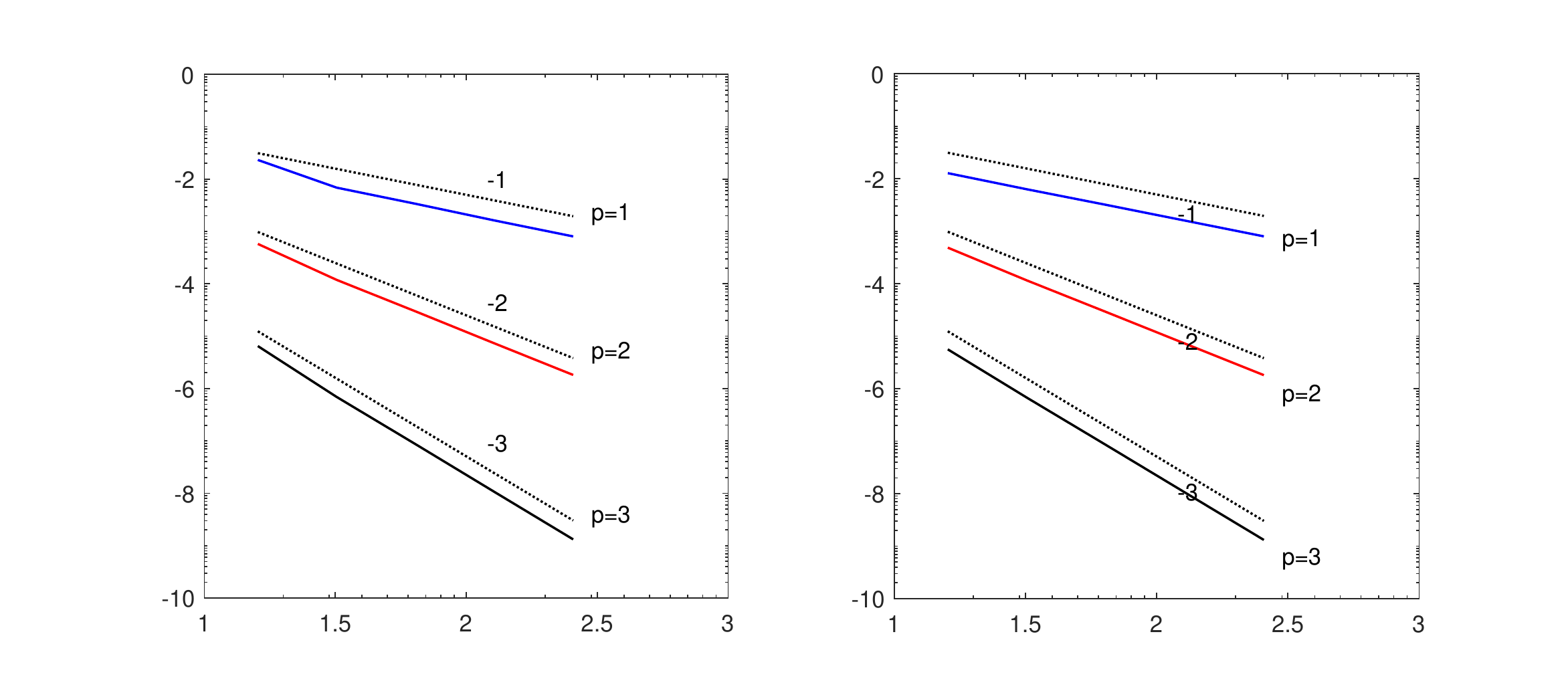}\label{F8}
				}
				\subfloat[]{
					\includegraphics[scale=0.26]{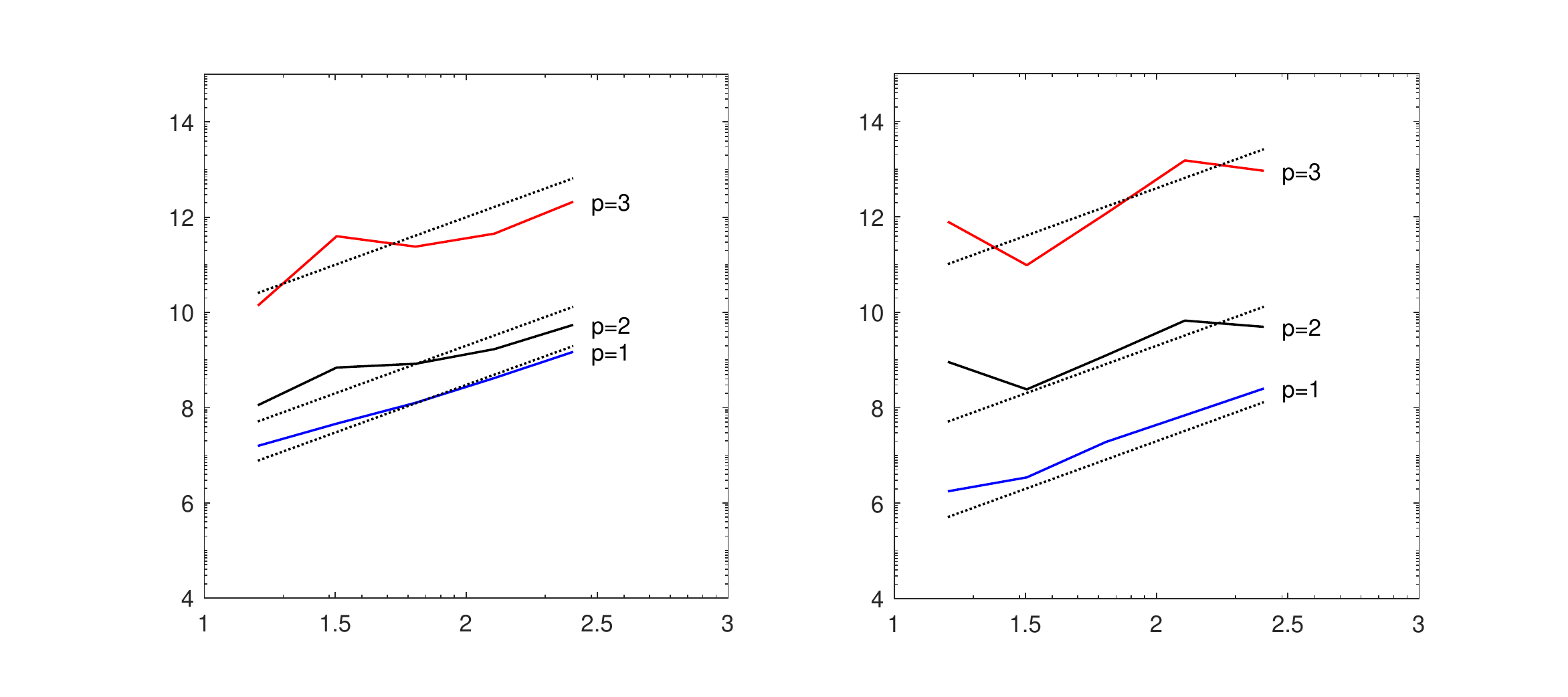}\label{F10}
				}
				\caption{$\lg\frac{\norm{\al\nabla(u-u_h)}_{0,\Om}}{\norm{\al\nabla u}_{0,\Om}}$ versus $\lg\frac{1}{h}$ in \eqref{F8} and $\lg{\rm cond}(A)$ versus $\lg\frac{1}{h}$ in \cref{F10}
				with $h=\frac{1}{16},\frac{1}{32},\cdots,\frac{1}{256}$ for $p=1,2,3,$ respectively. Left:
				$\al_1=1000,\al_2=1.$ Right: $\al_1=1,\al_2=1000.$ The dotted lines give reference lines of slopes $2$ in \eqref{F10}. }
			\end{figure}
			

			Next, we analyze the condition number $\mathrm{cond}(A)$ of the SUIPDG-FEM stiffness matrix. As shown in \cref{F10}, $\mathrm{cond}(A) = O(h^{-2})$, consistent with Theorem~\ref{Thm-condition-number}, since $\alpha_{\mathrm{max}}/\alpha_{\mathrm{min}} = 1000$ is fixed.
	
	
			Finally, we investigate the impact of the coefficient ratio $\alpha_2/\alpha_1$ on errors and conditioning. For a fixed mesh size $h = \frac{1}{32}$, \cref{F11} (a)-(c) plot $\lg\frac{\abs{u-u_h}_{1,h}}{\abs{u}_{1,h}}$, $\lg\frac{\norm{u-u_h}_{0,\Om}}{\norm{u}_{0,\Om}}$ and $\lg\frac{\norm{\al\nabla(u-u_h)}_{0,\Om}}{\norm{\al\nabla u}_{0,\Om}}$ versus $\lg\frac{\al_2}{\al_1}$, respectively. These results indicate that the relative $H^1$, $L^2$ and flux errors remain independent of $\alpha_{\mathrm{max}}/\alpha_{\mathrm{min}}$ as the ratio grows. Notably, the flux error (\cref{F11}(c)) is nearly constant with respect to $\alpha_2/\alpha_1$, fully consistent with \cref{rem-coefficient}. \cref{F11} (d) further confirms that $\mathrm{cond}(A)$ scales linearly with $\alpha_{\mathrm{max}}/\alpha_{\mathrm{min}}$, in agreement with Theorem~\ref{Thm-condition-number-0}.
	
			\begin{figure}[!htbp]
				\centering
				\includegraphics[scale=0.4]{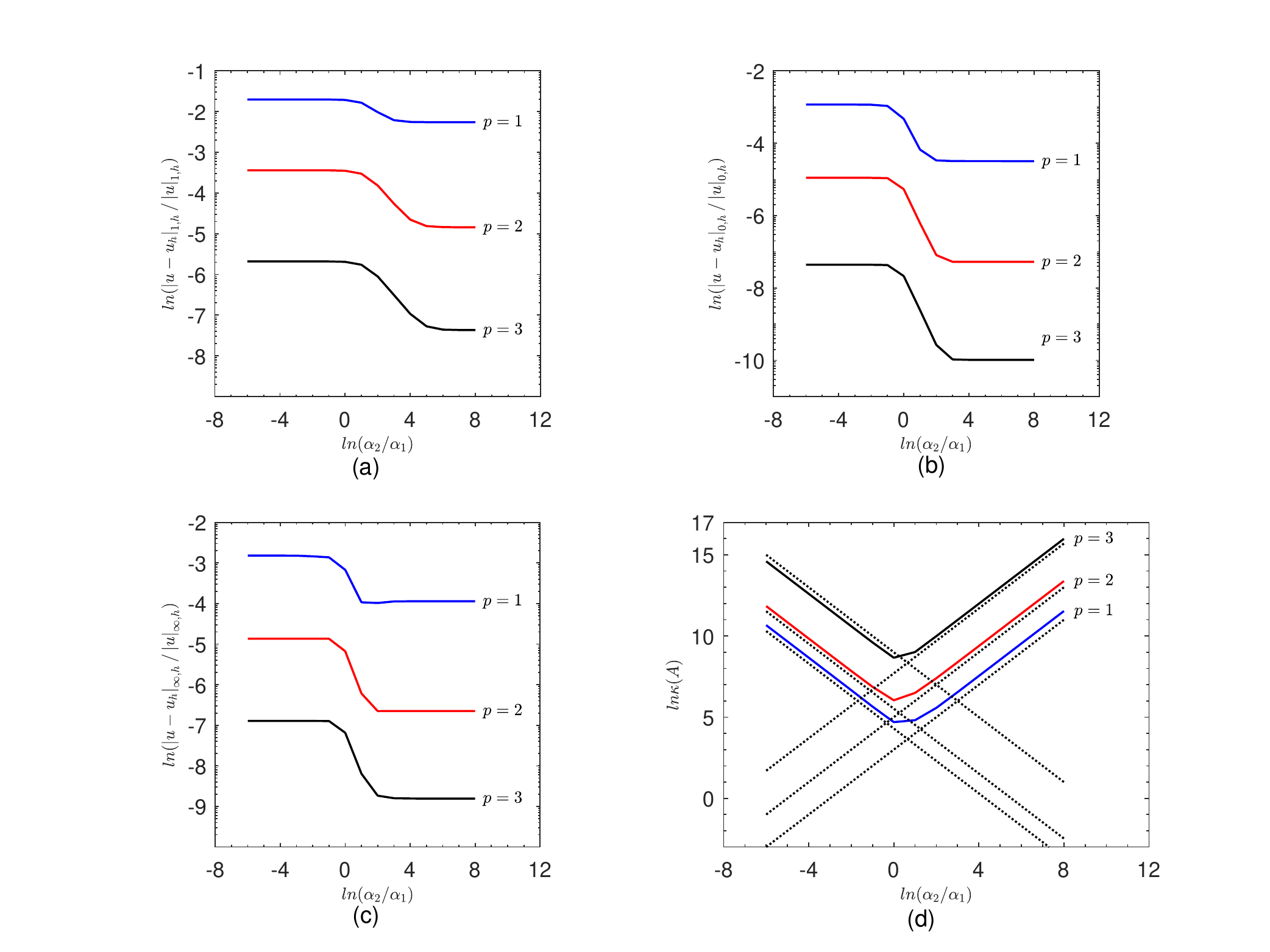}
				\caption{$\lg$-$\lg$ plots of the relative errors in $H^1$, $L^2$, flux norms, and
				${\rm cond}(A)$ versus $\al_2/\al_1$ with $(\al_1,\al_2)=(10^6,1),(10^5,1),\cdots,(1,1),(1,10),\cdots,(1,10^8),$
				for $p=1,2,3$, respectively. The dotted lines in (d) give reference lines of slopes $-1$ or $1$.}\label{F11}
			\end{figure}

		\bibliographystyle{abbrv} 
	\bibliography{ref}
\end{document}